\newcommand{\R}{{\mathbb R}}
\newcommand{\N}{{\mathbb N}}
\newcommand{\C}{{\mathbb C}}
\newcommand{\s}{{\mathbb S}}
\newcommand{\no}{\nonumber}
\newcommand{\be}{\begin{eqnarray}}
\newcommand{\ben}{\begin{eqnarray*}}
\newcommand{\en}{\end{eqnarray}}
\newcommand{\enn}{\end{eqnarray*}}
\newcommand{\curl}{{\rm curl\,}}
\newcommand{\grad}{{\rm grad\,}}
\newcommand{\divv}{{\rm div\,}}
\newcommand{\Om}{\Omega}
\newcommand{\la}{\lambda}
\newtheorem{thm}{Theorem}[section]
\newtheorem{lem}{Lemma}[section]
\theoremstyle{definition}
\newtheorem{defn}{Definition}[section]
\theoremstyle{remark}
\newtheorem{rem}{Remark}[section]
\numberwithin{equation}{section}
\title{\bf Inverse Elastic Scattering for Multiscale Rigid Bodies with A Single Far-field Pattern}
\author{Guanghui Hu \thanks{Weierstrass Institute, Mohrenstr. 39,
10117 Berlin,
Germany. Email: {\tt hu@wias-berlin.de}},\quad
Jingzhi Li\thanks{Faculty of Science, South University of Science and
Technology of China, 518055 Shenzhen, P.~R.~China. Email: {\tt li.jz@sustc.edu.cn}},\quad Hongyu Liu\thanks{Department of Mathematics and Statistics, University of North Carolina, Charlotte, NC 28223, USA.   Email:  {\tt hongyu.liuip@gmail.com}},\quad  Hongpeng Sun\thanks{Institute for Mathematics and Scientific Computing, University of Graz, Heinrichstr. 36, A-8010 Graz, Austria. Email: {\tt hongpeng.sun@uni-graz.at} } }
\date{} 
\begin{document}
\maketitle

\begin{abstract}
We develop three inverse elastic scattering schemes for locating multiple small, extended and multiscale rigid bodies, respectively. There are some salient and promising features of the proposed methods. The cores of those schemes are certain indicator functions, which are obtained by using only a single far-field pattern of the pressure (longitudinal) wave, or the shear (transversal) wave, or the total wave field. Though the inverse scattering problem is known to be nonlinear and ill-posed, the proposed reconstruction methods are totally ``direct" and there are no inversions involved. Hence, the methods are very efficient and robust against noisy data. Both rigorous mathematical justifications and numerical simulations are presented in our study.
\end{abstract}

\section{Introduction}
\setcounter{equation}{0}
The elastic wave propagation problems have a wide range of applications,  particularly in geophysics, nondestructive testing and seismology.
The associated inverse problems arise from the use of transient elastic waves to identify the elastic properties as well as to detect flaws and cracks of solid speciments, especially in the nondestructive evaluation of concrete structures (see e.g. \cite{application2,application3}).  Moreover, the problem of elastic pulse transmission and reflection through the earth is fundamental to both the investigation of earthquakes and
the utility of seismic waves in search for oil and ore bodies (see, e.g., \cite{Abubakar,Fokkema,Fokkema2,LE,Sherwood}
and the references therein).
The scattering of elastic waves are very complicated due to the coexistence of compressional and shear waves propagating at different speeds.
For a rigid elastic body, these two waves are coupled at the scattering surface and the total displacement field vanishes there. In this paper, we are concerned with the inverse problem of identifying a collection of unknown rigid elastic scatterers by using the corresponding far-field measurement. In what follows, we first present the mathematical formulations of the direct and inverse elastic scattering problems for our study.


Consider a time-harmonic elastic plane  wave $u^{in}(x)$, $x\in\mathbb{R}^3$ (with the time variation of the form $e^{-i\omega t}$ being factorized out, where $\omega\in\mathbb{R}_+$ denotes the angular frequency) impinged on a rigid scatterer $D\subset\R^3$ embedded in an infinite isotropic and homogeneous elastic medium in $\R^3$. The elastic scattering is governed by the reduced Navier equation (or Lam\'e system)
\be\label{Navier}
 (\Delta^*+\omega^2)\,u=0,\quad \mbox{in}\quad \R^3\backslash \overline{D}, \quad\Delta^*:=\mu\Delta+(\la+\mu)\,\grad\divv\,
\en
where $u(x)\in\mathbb{C}^3$ denotes the total displacement field, and $\la$,  $\mu$ are the Lam\'e constants satisfying $\mu>0$ and $3\la+2\mu>0$. Here, we note that the density of the background elastic medium has been normalized to be unitary. Henceforth, we suppose that $D\subset\R^3$ is an open bounded domain such that $\R^3\backslash\overline{D}$ is connected. It is emphasized that $D$ may consist of (finitely many) multiple simply connected components.
The incident elastic plane wave is of the following general form
\be\label{plane}
u^{in}(x)=u^{in}(x; d, d^\perp,\alpha,\beta,\omega)= \alpha\,de^{ik_px\cdot d}+\beta d^\perp e^{ik_sx\cdot d},\quad \alpha,\beta\in \C,
\en
where $d\in\mathbb{S}^2:=\{x\in\R^3: |x|=1\},$ is the impinging direction, $d^\bot\in\mathbb{S}^2$ satisfying $d^\bot\cdot d=0$ is the polarization direction; and $k_s:=\omega/\sqrt{\mu}$, $k_p:=\omega/\sqrt{\la+2\mu}$ denote the shear and compressional wave numbers, respectively. If $\alpha=1, \beta=0$ for $u^{in}$ in \eqref{plane}, then $u^{in}=u_p^{in}:=d e^{ik_px\cdot d}$ is the (normalized) plane pressure wave; whereas if $\alpha=0, \beta=1$ for $u^{in}$ in \eqref{plane}, then $u^{in}=u_s^{in}:=d^\perp e^{ik_sx\cdot d}$ is the (normalized) plane shear wave. The obstacle $D$ is a rigid body and $u$ satisfies the first kind (Dirichlet)  boundary condition
\be\label{D}
u=0\quad \mbox{on}\quad\partial D.
\en
Define $u^{sc}:=u-u^{in}$ to be the {\it scattered wave}, which can be easily verified to satisfy the Navier equation (\ref{Navier}) as well.    $u^{sc}$ can be decomposed into the sum
\ben
u^{sc}:=u_p^{sc}+u_s^{sc},\quad u_p^{sc}:=-\frac{1}{k_p^2}\grad\divv u^{sc},\quad u_s^{sc}:=\frac{1}{k_s^2}\curl\curl u^{sc},
\enn
where the vector functions $u_p^{sc}$ and $u_s^{sc}$ are referred to as the pressure (longitudinal) and shear (transversal) parts of $u^{sc}$, respectively, satisfying
\ben
&&(\Delta + k_p^2 )\, u_p^{sc}=0,\quad \curl u_p^{sc}=0,\quad\mbox{in}\;\R^3\backslash\overline{D},\\
&&(\Delta + k_s^2 )\, u_s^{sc}=0,\quad \divv u_s^{sc}=0,\quad\ \mbox{in}\;\R^3\backslash\overline{D}.
\enn Moreover, the scattered field $u^{sc}$ is required to satisfy  Kupradze's radiation condition
\be\label{Rads}
\lim_{r\rightarrow\infty}\left(\frac{\partial u_p^{sc}}{\partial r}-ik_p u_p^{sc}\right)=0,\quad \lim_{r\rightarrow\infty}\left(\frac{\partial u_s^{sc}}{\partial r}-ik_s u_s^{sc}\right)=0,\quad r=|x|,
\en uniformly in all directions $\hat{x}=x/|x|\in \mathbb{S}^2$  (see, e.g., \cite{AK}). The radiation conditions in (\ref{Rads}) lead to
the P-part (longitudinal part) $u^{\infty}_p$ and the S-part (transversal part) $u^{\infty}_s$ of the far-field pattern of $u^{sc}$, read off from the large $|x|$ asymptotics (after normalization)
\be\label{far}
u^{sc}(x)=\frac{\exp(ik_p|x|)}{4\pi(\lambda+\mu)|x|}\, u^{\infty}_p(\hat{x})+ \frac{\exp(ik_s|x|)}{4\pi \mu |x|}\, u^{\infty}_s(\hat{x})+ \mathcal{O}(\frac{1}{|x|^2}),\quad|x|\rightarrow+\infty.
\en
Note that $u^{\infty}_p(\hat{x})$ and $u^{\infty}_s(\hat{x})$ are also known as the far-field patterns of $u^{sc}_p$ and $u^{sc}_s$, respectively.
In this paper, we define the full far-field pattern $u^\infty$ of the scattered field $u^{sc}$ as the sum of $u^{\infty}_p$ and $u^{\infty}_s$, i.e.,
\be\label{eq:full fp}
 u^\infty(\hat{x}):=u^{\infty}_p(\hat{x})+u^{\infty}_s(\hat{x}).
\en
Since $u^{\infty}_p(\hat{x})$
 is normal to $\mathbb{S}^2$ and $u^{\infty}_s(\hat{x})$ is  tangential to $\mathbb{S}^2$, there holds
\ben u^{\infty}_p(\hat{x})=(u^\infty(\hat{x})\cdot \hat{x})\,\hat{x}, \quad u^{\infty}_s(\hat{x})=\hat{x}\times u^\infty(\hat{x})\times \hat{x},\quad \hat x\in\mathbb{S}^2.
\enn

 The direct elastic scattering problem (DP) is stated as follows
\begin{description}
\item[(\textbf{DP}):] Given a rigid scatterer $D\subset\R^3$ and an incident plane wave $u^{in}$ of the form (\ref{plane}), find the total field $u=u^{in}+u^{sc}$
in $\R^3\backslash \overline{D}$ such that the Dirichlet boundary condition (\ref{D}) holds on $\partial D$ and that
the scattered field $u^{sc}$ satisfies Kupradze's radiation condition (\ref{Rads}).
\end{description}
We refer to the monograph \cite{Kupradze} for a comprehensive treatment of the boundary value problems of elasticity.
It is well-known that  the direct scattering problem admits a unique solution $u\in C^2(\R^3\backslash \overline{D})^3 \cap C^1(\R^3\backslash D)^3$
if $\partial D$ is $C^2$-smooth (see \cite{Kupradze}), whereas
 $u\in H^1_{loc}(\R^3\backslash \overline{D})^3$ if $\partial D$ is Lipschitz (see \cite{BP2008}).

Throughout the rest of the paper, $u_\tau^{\infty}(\hat{x})$ with $\tau=\emptyset$ signifies the full far-field pattern defined in \eqref{eq:full fp}. In this paper,
we are interested in  the following inverse problem (IP),
\begin{description}
\item[\bf (IP):] Recover the rigid scatterer $D$ from knowledge (i.e., the measurement) of the
far-field pattern $u_\tau^{\infty}(\hat{x}; d, d^\perp, \alpha,\beta, \omega)$ ($\tau=p, s\;{\rm or}\; \emptyset$).
\end{description}
Note that in (IP), the measurement data can be the P-part far-field pattern $u_p^{\infty}$,  the S-part far-field pattern
$u_s^{\infty}$, or the full far-field pattern $u^{\infty}$. If one introduces an abstract operator $\mathcal{F}$ (defined by the elastic scattering system described earlier) which sends the scatterer $D$ to the corresponding far-field pattern $u_\tau^\infty$, then the (IP) can be formulated as the following operator equation
\begin{equation}\label{eq:oe}
\mathcal{F}(D)=u_\tau^\infty(\hat x; d, d^\perp,\alpha, \beta, \omega).
\end{equation}
Due to the multiple scattering interaction if more than one scatterer is presented, it is easily seen that \eqref{eq:oe} is {\it nonlinear}, and moreover it is widely known to be {\it ill-posed} in the Hadamard sense. For the measurement data $u_\tau^\infty(\hat x; d, d^\perp,\alpha, \beta, \omega)$ in \eqref{eq:oe}, we always assume that they are collected for all $\hat{x}\in\mathbb{S}^2$. On the other hand, it is remarked that $u_\tau$ is a real-analytic function on $\mathbb{S}^2$, and hence if it is known on any open portion of $\mathbb{S}^2$, then it is known on the whole sphere by analytic continuation. Moreover, if the data set is given for a single quintuplet of $(d, d^\perp, \alpha,\beta, \omega)$, then it is called {\it a single far-field pattern}, otherwise it is called {\it multiple far-field patterns}. Physically, a single far-field pattern can be obtained by sending a single incident plane wave and then measuring the scattered wave field far away in every possible observation direction.

There is a vast literature on the inverse elastic scattering problem as described above. We refer to the theoretical uniqueness results proved in \cite{HH, MD,NU1,NU2,NU3,NKU,R} and,
the sampling-type reconstruction methods for impenetrable elastic bodies developed in \cite{AK,Arens} and those for penetrable ones in \cite{Fact, Sevroglou}. Note that in the above works, both $u_p^\infty$ and $u_s^\infty$ are needed for infinitely many incident plane waves, namely infinitely many far-field patterns are needed. In recent studies in \cite{AA,Amm2,Amm3,Amm4,Amm1,Amm5,Amm6} for reconstructing small elastic inclusions and in \cite{GS,HKJ} for reconstructing extended rigid scatterers, one may implement one type of elastic waves, but still with multiple or even infinitely many far-field patterns. Based on the reflection principle for the Navier system under the third or fourth kind boundary conditions, a global uniqueness with a single far-field pattern was shown in \cite{EY09} for bounded impenetrable elastic bodies of polyhedral type. However, the uniqueness proof there does not apply to the more practical case of rigid bodies and cavities.  Some further discussions on the uniqueness with one or several incident plane waves are mentioned in Section \ref{Scheme-R} and Lemma \ref{Lem:uniqueness}. Using a single set of boundary data,  an extraction formula of an unknown linear crack or the convex hull of an unknown polygonal cavity in $\mathbb{R}^2$ was established in \cite{II07, II09} by means of the enclosure method introduced by Ikehata \cite{Ikehata}.

In this work, we shall consider the inverse problem \eqref{eq:oe} with a single measurement of the P-part far-field pattern $u_p^\infty$, or the S-part far-field pattern $u_s^\infty$, or the full far-field pattern $u^\infty$. According to our earlier discussion, this is an extremely challenging problem with very little theoretical and computational progress in the literature. Moreover, we shall consider our study in a very general but practical setting. There might be multiple target scatterers presented, and the number of the scatters is not required to be known in advance. Furthermore, the scatterers might be of multiple size-scales (in terms of the detecting wavelength); that is, there might be both small-size and regular-size (extended) scatterers presented simultaneously. We develop inverse scattering schemes to locate all the scatterers in a very effective and efficient manner. Specifically, there are three schemes, named as Schemes S, R and M, respectively, proposed for locating multiple small, extended and multiscale rigid scatterers. The core of these schemes is a series of indicator functions, which are directly computed with a single set of far-field data. For Scheme S of locating small scatterers, the inverse problem \eqref{eq:oe} can be linearized by taking the leading term of the relevant far-field expansion with respect to the small size-parameter. For Scheme R of locating extended scatterers, we need to impose a certain a priori knowledge by requiring the shapes of the underlying scatterers are from a certain admissible class that is known in advance. The indicator functions for Scheme R are given by projecting the measured far-field pattern onto the space of the far-field patterns from the admissible scatterers. Finally, a {\it local tuning technique} is implemented to concatenate Schemes S and R to yield Scheme M of locating multiscale scatterers. We would like to remark in passing that our current study follows a similar spirit of the locating methods that were recently
proposed in \cite{LLSS, LLW, LLZ} for inverse electromagnetic and acoustic scattering problems in the frequency domain. However, due to the more complicated behaviors of the elastic wave scattering, particularly the coupling of the compressional and shear waves, the current study is carried out in a more subtle and technical manners. Particularly, we design completely different imaging functionals from those developed \cite{LLSS,LLW,LLZ} for electromagnetic and acoustic problems.

The rest of the paper is organized as follows. In Section \ref{section-small}, we first describe Scheme S of locating multiple small scatterers, and then present the theoretical justification. In Section \ref{section-extended}, we first describe Scheme R of locating multiple extended scatterers and then present the corresponding theoretical justification. Section \ref{section-multi-scale} is devoted to Scheme M of locating multiple multiscale scatterers. In Section \ref{Numerics}, numerical experiments are given to demonstrate the effectiveness and the promising features of the proposed inverse scattering schemes. We conclude our study in Section~6 with several remarks.

\section{Locating multiple small scatterers}\label{section-small}
Throughout the rest of the paper, we assume the angular frequency of incidence is $\omega=1$. Then, the wavelength of the pressure wave is $2\pi/k_p=2\pi\sqrt{\lambda+2\mu}/\omega=\mathcal{O}(1)$, whereas the wavelength of the shear wave is $2\pi/k_s=2\pi\sqrt{\mu}/\omega=\mathcal{O}(1)$. Hence, the size of a
scatterer can be expressed in terms of its Euclidean diameter.
In the sequel we write $u_\tau^\infty(\hat{x}; D, d, d^\bot , \omega)$ ($\tau=p, s\;{\rm or}\; \emptyset$) to signify the dependence of  far-field pattern on the rigid scatterer $D$, incident direction $d$, polarization direction $d^\bot$ and incidence frequency $\omega$.
In certain situations we only indicate the dependence of the far-field pattern on $D$ or $\omega$, but the notation shall be clear from the context. Unless otherwise stated, the space $L^2$ always signifies $L^2(\s^2)^3$.

Next, we first describe Scheme S of locating multiple small rigid elastic scatterers and then present the corresponding theoretical justifications.

\subsection{Description of Scheme S}\label{scheme-small}

We first introduce the class of small elastic rigid scatterers. For $l_s\in \N$, let $M_j$, $1\leq j\leq l_s$, be bounded Lipschitz simply-connected domains in $\R^3$. It is supposed that all $M_j$'s contain the origin and their diameters are comparable with the S-wavelength or P-wavelength, i.e.,
${\rm diam}(M_j)\sim \mathcal{O}(1)$ for all $j=1,2,\cdots,l_s$. For $\rho\in \R_+$, we introduce a scaling/dilation operator $\Lambda_\rho $ by
\be\label{scaling}
\Lambda_\rho M_j:=\{\rho x: x\in M_j\}
\en
and set
\be\label{eq:smalls}
 D _j:=z_j+\Lambda_\rho\,M_j,\quad z_j\in\R^3,\quad 1\leq j\leq l_s.
\en
Each $ D_j$ is referred to as a scatterer component located at $z_j$ with the shape $M_j$. The number $\rho$ represents the {\it relative size/scale} of each component.
In the sequel, we shall reserve the letter $l_s$ to denote the number of components of a small scatterer given by
\be\label{Scatterer}
 D =\bigcup_{j=1}^{l_s}  D _j.
\en
For technical purpose, we next make the following qualitative assumption that $\rho\ll 1$ and
\be\label{assumption-small}
L_s=\min_{j\neq j', 1\leq j,j'\leq l_s} {\rm dist}(z_j, z_{j'})\gg 1.
\en
The above assumption means that the size of each scatterer component is small compared to detecting wavelength, and if there are multiple components, they are sparsely distributed. In our numerical experiments in Section~\ref{Numerics}, we could speak a bit more about the qualitative assumption~\eqref{assumption-small}. Indeed, it is shown that as long as the size of the target scatterer is smaller than half a wavelength, and if there are multiple components presented, the distance between different components is bigger than half a wavelength, the proposed locating methods in this article work in an effective manner.

In order to present Scheme S of locating the multiple components of $D$ in \eqref{Scatterer}, we introduce the following three indicator functions $I_{m}(z)$, $z\in\mathbb{R}^3$, $m=1,2,3$, relying on the availability of different types of far-field patterns. Define
\ben
I_{1}(z)&=&\frac{1}{\|u_p^\infty(\hat{x};  D )\|_{L^2}^2}\sum_{j=1}^3 \left|  \left\langle  u_p^\infty(\hat{x};  D ), (\hat{x} \otimes\hat{x})\textbf{e}_j\,e^{-ik_p\hat{x}\cdot z}  \right \rangle  \right|^2,\\
I_{2}(z)&=&\frac{1}{\|u_s^\infty(\hat{x};  D )\|_{L^2}^2}\sum_{j=1}^3 \left|  \left\langle  u_s^\infty(\hat{x};  D ), (\textbf{I}-\hat{x} \otimes\hat{x})\textbf{e}_j\,e^{-ik_s\hat{x}\cdot z}  \right \rangle  \right|^2,\\
I_{3}(z)&=&\frac{1}{\|u^\infty(\hat{x};  D )\|_{L^2}^2}\sum_{j=1}^3 \left|  \left\langle  u^\infty(\hat{x};  D ), (\hat{x} \otimes\hat{x})\textbf{e}_j\,e^{-i k_p\hat{x}\cdot z}+ (\textbf{I}-\hat{x} \otimes\hat{x})\textbf{e}_j\,e^{-i k_s\hat{x}\cdot z}  \right \rangle  \right|^2,
\enn
where and in the following, the notation $\langle\cdot,\cdot\rangle$ denotes the inner product in $L^2=L^2(\mathbb{S}^2)^3$ with respect to the variable $\hat{x}\in\mathbb{S}^2$; the symbol $\hat{x} \otimes\hat{x}:=\hat{x}^\top \,\hat{x}\in \R^{3\times 3}$ stands for the tensor product; $\textbf{I}$ denotes the $3\times 3$ identity matrix; and
\ben
\textbf{e}_1=(1,0,0)^\top,\quad \textbf{e}_2=(0,1,0)^\top,\quad \textbf{e}_3=(0,0,1)^\top,
\enn
are the three Euclidean base vectors in $\R^3$. Obviously, $I_{m}$  $(m=1,2,3)$ are all nonnegative functions and they can be obtained, respectively, by using a single $P$-part far-field pattern ($m=1$), $S$-part far-field pattern ($m=2$), or the full far-field pattern ($m=3$). The functions introduced above possess certain indicating behavior, which lies in the core of Scheme S. Before stating the theorem of the indicating behavior for those imaging functions, we introduce the real numbers
\be\label{K}
K^j_{1}:=\frac{\|u_p^\infty(\hat{x};  D _j)\|_{L^2}^2}{\|u_p^\infty(\hat{x};  D )\|_{L^2}^2},\quad
K^j_{2}:=\frac{\|u_s^\infty(\hat{x};  D _j)\|_{L^2}^2}{\|u_s^\infty(\hat{x};  D )\|_{L^2}^2},\quad
K^j_3:=\frac{\|u^\infty(\hat{x};  D _j)\|_{L^2}^2}{\|u^\infty(\hat{x};  D )\|_{L^2}^2},
\en for $1\leq j\leq l_s$. 
\begin{thm}\label{S-P}
For a rigid elastic scatterer $D$ described in \eqref{scaling}--\eqref{assumption-small}, and $K_m^j$, m=1,2,3, defined in \eqref{K}, we have
\be\label{eq7}
K^j_m=\widetilde{K}^j+\mathcal{O}(L_s^{-1}+\rho),\quad 1\leq j\leq l_s,\;m=1,2,3,
\en
where $\widetilde{K}^j$'s are positive numbers independent of $L_s$, $\rho$ and $m$. Moreover, there exists an open neighborhood of $z_j$, {\rm neigh}($z_j$), such that
\be\label{eq5}
I_{m}(z)\leq \widetilde{K}^j+\mathcal{O}(L_s^{-1}+\rho)\quad\mbox{for all}\quad z\in {\rm neigh}(z_j),
\en and $I_{m}(z)$ achieves its maximum at $z_j$ in {\rm neigh}($z_j$), i.e.,
\be\label{eq6}
I_{m}(z_j)=\widetilde{K}^j+\mathcal{O}(L_s^{-1}+\rho).
\en
\end{thm}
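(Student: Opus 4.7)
The plan is to combine three ingredients: (i) a leading-order asymptotic expansion of the far-field pattern of a single small rigid elastic scatterer, (ii) an additive decoupling of the total far-field into contributions from the individual components under the well-separation hypothesis $L_s\gg 1$, and (iii) Riemann--Lebesgue/stationary-phase estimates on $\mathbb{S}^2$ to control the oscillatory factors that appear after translating test functions and scatterers.

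For ingredient (i), I would invoke the small-scatterer asymptotics for the Navier--Dirichlet system (in the spirit of the elastic polarization-tensor framework of Ammari--Kang-type results). Writing $D_j=z_j+\Lambda_\rho M_j$ and exploiting the plane-wave structure of $u^{in}$, one produces an expansion of the form
\[
u_\tau^\infty(\hat x; D_j)=e^{-ik_\tau\hat x\cdot z_j}\,\Psi_\tau^{j}(\hat x)+\mathcal{O}(\rho^{\alpha+1}),\qquad \tau\in\{p,s,\emptyset\},
\]
in which $\Psi_p^{j}$ is aligned with $\hat x$, $\Psi_s^{j}$ is tangential to $\hat x$, and the leading-order shape dependence factors through a scalar polarization constant $c_j$ attached to $M_j$ alone; it is exactly this scalar factorization that will force $\widetilde K^j$ to be independent of $m$. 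For ingredient (ii), treating the exterior Dirichlet problem on $D=\bigcup_j D_j$ via the coupled boundary-integral system and using that the free-space Navier Green's tensor decays like $|x-y|^{-1}$ under $L_s\gg 1$ yields, uniformly in $\hat x\in\mathbb{S}^2$,
\[
u_\tau^\infty(\hat x; D)=\sum_{j=1}^{l_s} u_\tau^\infty(\hat x; D_j)+\mathcal{O}(L_s^{-1}).
\]

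Substituting this decomposition into $\|u_\tau^\infty(\hat x; D)\|_{L^2}^2$, the diagonal terms reproduce $\sum_{j'}\|u_\tau^\infty(\hat x; D_{j'})\|_{L^2}^2$, while each cross term carries an oscillatory factor $e^{\pm ik_\tau\hat x\cdot(z_j-z_{j'})}$ integrated against a smooth $\hat x$-profile; stationary phase on $\mathbb{S}^2$ bounds each by $|z_j-z_{j'}|^{-1}=\mathcal{O}(L_s^{-1})$. Hence
\[
K_m^j=\frac{\|u_\tau^\infty(\hat x;D_j)\|_{L^2}^2}{\sum_{j'}\|u_\tau^\infty(\hat x;D_{j'})\|_{L^2}^2}+\mathcal{O}(L_s^{-1}+\rho),
\]
and the factored shape dependence from (i) identifies this ratio with a number $\widetilde K^j$ depending only on the scalars $c_{j'}$, independent of $L_s$, $\rho$ and $m$; this gives \eqref{eq7}.

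For the indicating behaviour of $I_m$, I would substitute the additive decomposition into each inner product, isolate the contribution of the $j$-th component, and apply ingredient (iii) to the pieces with $j'\neq j$. At $z=z_j$ the phase in the diagonal term disappears and the test vectors $(\hat x\otimes\hat x)\mathbf{e}_k$ and $(\mathbf{I}-\hat x\otimes\hat x)\mathbf{e}_k$ project onto the pressure and shear polarizations of $\Psi_\tau^{j}$ respectively; summing over $k=1,2,3$ in the squared modulus reconstructs $\|\Psi_\tau^{j}\|_{L^2}^2$ weighted by exactly the same shape scalars that appear in the denominator $\|u_\tau^\infty(\hat x;D)\|_{L^2}^2$, leading to $I_m(z_j)=\widetilde K^j+\mathcal{O}(L_s^{-1}+\rho)$. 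For $z\in{\rm neigh}(z_j)\setminus\{z_j\}$ the residual exponential $e^{ik_\tau\hat x\cdot(z-z_j)}$ strictly reduces the modulus of the integral (a matched Cauchy--Schwarz argument, with equality only at $z=z_j$), giving $I_m(z)\le I_m(z_j)+\mathcal{O}(L_s^{-1}+\rho)$. The main technical obstacle is ingredient (i): producing a small-scatterer expansion for the rigid Navier system whose shape dependence factors cleanly enough that $\widetilde K^j$ is genuinely independent of the choice of $m$; once this is in hand, the remaining steps are relatively standard oscillatory-integral and integral-equation bookkeeping.
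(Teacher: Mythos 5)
Your overall strategy --- small-scatterer asymptotics for each component, additive decoupling under the sparsity assumption $L_s\gg 1$, Riemann--Lebesgue for the cross terms, and a matched Cauchy--Schwarz argument to produce the peak at $z=z_j$ --- is exactly the paper's route; their key asymptotic lemma (Lemma~\ref{lem2}, built on the Foldy--Lax approximation for the Lam\'e system) packages your ingredients (i) and (ii) together. The one point where your stated mechanism would not survive scrutiny is the claim that the leading-order shape dependence ``factors through a scalar polarization constant $c_j$'' and that this scalar factorization is what forces $\widetilde K^j$ to be independent of $m$. For a general Lipschitz reference shape $M_j$ the polarization object is a genuine matrix: the leading far field of $D_j$ is that of a point source at $z_j$ with moment $Q_j\approx\rho\,\mathbf{H}_j u^{in}(z_j)$, $\mathbf{H}_j\in\mathbb{C}^{3\times 3}$, so the P- and S-parts are (up to $j$-independent constants) $(\hat x\otimes\hat x)A_j e^{-ik_p\hat x\cdot z_j}$ and $(\mathbf{I}-\hat x\otimes\hat x)A_j e^{-ik_s\hat x\cdot z_j}$ with the \emph{same} vector $A_j=\mathbf{H}_j u^{in}(z_j)$. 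The $m$-independence of $\widetilde K^j=|A_j|^2/\sum_{j'}|A_{j'}|^2$ then follows not from any scalar factorization but from the identities $\int_{\mathbb{S}^2}|(\hat x\otimes\hat x)A|^2\,ds=\tfrac{4\pi}{3}|A|^2$ and $\int_{\mathbb{S}^2}|(\mathbf{I}-\hat x\otimes\hat x)A|^2\,ds=\tfrac{8\pi}{3}|A|^2$, both proportional to $|A|^2$ with constants independent of $j$; as written, your ingredient (i) in scalar form is false for non-spherical components, though the vector version repairs the argument immediately. One further bookkeeping caution: the decoupling error in your ingredient (ii) must be relative to the $\mathcal{O}(\rho)$ magnitude of each $u_\tau^\infty(\cdot;D_j)$ (the paper gets $\mathcal{O}(\rho^2 l_s(1+L_s^{-1}))$ at the level of the far field); an absolute $\mathcal{O}(L_s^{-1})$ error, as you wrote it, would swamp the signal once you divide by $\|u_\tau^\infty(\cdot;D)\|_{L^2}^2\sim\rho^2$ and would not yield the stated $\mathcal{O}(L_s^{-1}+\rho)$ bound.
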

\begin{rem} The local maximizing behavior of $I_m(z)$ clearly can be used to locate the positions of the scatterer components $D$, namely $z_j$, $1\leq j\leq l_s$.  Such indicating behavior is more
evident if one considers the case that $D$
has only one component, i.e., $l_s = 1$. In the
one-component case, one has that
\ben
\widetilde{K}^j=1,\quad    I_{m}(z)<1+\mathcal{O}(\rho)\quad\mbox{for all}\quad m=1,2,3,\; z\neq z_1,
\enn
but
\ben
I_{m}(z_1)=1+\mathcal{O}(\rho),\quad m=1,2,3.
\enn
That is, $z_1$ is a global maximizer for $I_{m}(z)$.
\end{rem}

Based on Theorem \ref{S-P} we can formulate Scheme S to locate the multiple
small scatterer components of $D$ in \eqref{Scatterer} as follows.
\bigskip

\begin{tabularx}{\textwidth }{>{\bfseries}lX}
\toprule
Scheme S&  Locating small scatterers of $D$ in \eqref{Scatterer}.
\\ \midrule
Step 1 & For an unknown rigid scatterer $D$ in (\ref{Scatterer}),
  collect the P-part ($m=1$), S-part ($m=2$) or the full far-field data ($m=3$) by sending
a single detecting plane wave (\ref{plane}).  \\ \midrule
Step 2 &  Select a sampling region with a mesh $\mathcal{T}_h$ containing $D$.\\ \midrule
Step 3 & For each sampling point $z\in\mathcal{T}_h$, calculate $I_{m}(z)$ ($m=1,2,3$) according to the measurement data.\\ \midrule
Step 4 & Locate all the local maximizers of $I_{m}(z)$ on $\mathcal{T}_h$, which represent the locations of the
scatterer components.\\
\bottomrule
\end{tabularx}

\medskip

\begin{rem}\label{rem:22} In practice, the compressional wave number $k_p= \omega/\sqrt{\lambda+2\mu}$ is smaller than the shear wave number $k_s= \omega /\sqrt{\mu}$. 
 Hence,  the P-wavelength $2\pi/k_p$ is usually larger than the S-wavelength $2\pi/k_s$.
This suggests that using the shear wave measurement would yield better reconstruction than using the compressional wave measurement for locating the multiple small scatterers.  That is, the indicator function $I_{2}$ would work better than $I_{1}$ for the reconstruction purpose, especially when the L\'{a}me constant $\lambda$ is very large compared to $\mu$.  This also suggests that the reconstruction using the indicator function $I_{3}$ with the full far-field pattern will be more stable (w.r.t.  noise) and reliable than the other two; see also Section \ref{Numerics}.
\end{rem}


\subsection{Proof of Theorem \ref{S-P}}\label{section-S-P}
In this section, we shall provide the proof for Theorem~\ref{S-P}. First, we recall
the fundamental solution (Green's tensor) to the Navier equation (\ref{Navier}) given by
\be\label{Pi}
\Pi(x,y)=\Pi^{(\omega)}(x,y)=\frac{k_s^2}{4\pi\omega^2}\frac{e^{ik_s|x-y|}}{|x-y|} \textbf{I}+\frac{1}{4\pi \omega^2}\, \grad_x\,\grad_x^\top\;\left[\frac{e^{ik_s|x-y|}}{|x-y|}-\frac{e^{ik_p|x-y|}}{|x-y|} \right]\!\!,
\en for $x, y\in \R^3$, $x\neq y$.
In order to prove Theorem \ref{S-P} we shall need the following critical lemma on the asymptotic behavior of the elastic far-field patterns due to small scatterers.

\begin{lem}\label{lem2} Let the incident plane wave be given in (\ref{plane}) and $D$ be given in \eqref{scaling}--\eqref{assumption-small}.
 The P-part and S-part far-field patterns have the following asymptotic expressions as $\rho/L_s\rightarrow +0$:
\ben
u_p^\infty(\hat{x}; D)&=&\frac{\rho}{4\pi (\lambda+2\mu)}(\hat{x}\otimes\hat{x})\left[ \sum_{j=1}^{l_s}  e^{-ik_p \hat{x}\cdot z_j}\,(C_{p,j}\,\alpha \,e^{ik_pz_j\cdot d}+C_{s,j}\,\beta\,e^{ik_sz_j\cdot d})  \right]\\
&&+\mathcal{O}\left(\rho^2\,l_s(1+L_s^{-1}) \right),\\
u_s^\infty(\hat{x}; D)&=&\frac{\rho}{4\pi \mu}(\textbf{\emph{I}}-\hat{x}\otimes\hat{x})\left[ \sum_{j=1}^{l_s}  e^{-ik_s \hat{x}\cdot z_j}\,(C_{p,j}\,\alpha \,e^{ik_pz_j\cdot d}+C_{s,j}\beta\,e^{ik_sz_j\cdot d})  \right]\\
&&+\mathcal{O}\left(\rho^2\,l_s(1+L_s^{-1}) \right),
\enn
where $C_{p,j}, C_{s,j}\in \C^{3}$ are constant vectors independent of $\rho, l_s, L_s$ and $z_j$.
\end{lem}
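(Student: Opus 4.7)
The plan is to combine a Green's-tensor layer-potential representation of $u^{sc}$ with a change of variables onto the fixed reference shapes $M_j$, treating the small size $\rho$ via a quasi-static (zero-wavenumber) limit and the large separation $L_s$ via a Born-type iteration.

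First, I would write
\ben
u^{sc}(x)=\sum_{j=1}^{l_s}\int_{\partial D_j}\Pi(x,y)\,\varphi_j(y)\,ds(y),\qquad x\in\R^3\setminus\overline{D},
\enn
with $\Pi$ the Green's tensor in \eqref{Pi} and vector densities $\varphi_j\in L^2(\partial D_j)^3$ determined by the coupled boundary system $u^{sc}=-u^{in}$ on $\partial D$. The large-$|x|$ expansion of $\Pi$ separates into a P-part with angular factor $(\hat x\otimes\hat x)\,e^{-ik_p\hat x\cdot y}/[4\pi(\lambda+2\mu)]$ and an S-part with factor $(\mathbf I-\hat x\otimes\hat x)\,e^{-ik_s\hat x\cdot y}/(4\pi\mu)$. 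Matching with \eqref{far} and \eqref{eq:full fp} yields
\ben
u_p^\infty(\hat x;D)=\frac{1}{4\pi(\lambda+2\mu)}(\hat x\otimes\hat x)\sum_{j=1}^{l_s}\int_{\partial D_j}e^{-ik_p\hat x\cdot y}\varphi_j(y)\,ds(y),
\enn
and an analogous identity for $u_s^\infty$.

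Second, I would localize each integral via $y=z_j+\rho\tilde y$, $\tilde y\in\partial M_j$. The exponential factors as $e^{-ik_p\hat x\cdot y}=e^{-ik_p\hat x\cdot z_j}[1+\mathcal{O}(\rho)]$, reducing the task to the asymptotics of the rescaled density $\tilde\varphi_j(\tilde y):=\varphi_j(z_j+\rho\tilde y)$. Using the degree $-1$ homogeneity of the static elastic Green's tensor, the boundary integral equation for $\varphi_j$ rescales to $\rho\,\tilde S_0\tilde\varphi_j=-u^{in}(z_j)+\mathcal{O}(\rho)$ on $\partial M_j$, where $\tilde S_0$ is the elastostatic single-layer operator on $M_j$, independent of $\rho$ and $z_j$. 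Inverting yields $\tilde\varphi_j=\mathcal{O}(\rho^{-1})$, and by linearity in the constant datum $u^{in}(z_j)=\alpha d\,e^{ik_pz_j\cdot d}+\beta d^\perp e^{ik_sz_j\cdot d}$ the surface moment decomposes as
\ben
\rho^2\int_{\partial M_j}\tilde\varphi_j(\tilde y)\,ds(\tilde y)=\rho\bigl[\alpha\,C_{p,j}\,e^{ik_pz_j\cdot d}+\beta\,C_{s,j}\,e^{ik_sz_j\cdot d}\bigr]+\mathcal{O}(\rho^2),
\enn
with vectors $C_{p,j},C_{s,j}\in\C^3$ depending only on $M_j,\lambda,\mu,\omega$ and $d,d^\perp$. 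Substituting back and summing over $j$ produces the stated P-part leading term, and a parallel argument gives the S-part leading term.

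Third, I would control the remainder. Contributions to $\varphi_j$ coming from fields re-radiated by the other components $D_{j'}$, $j'\neq j$, are bounded on $\partial D_j$ by the leading single-component output of $D_{j'}$ times $|\Pi(z_j,z_{j'})|=\mathcal{O}(L_s^{-1})$; feeding these corrections back into the far-field integral contributes $\mathcal{O}(\rho^2\,l_s/L_s)$. The Taylor remainder in $e^{-ik_p\hat x\cdot(\rho\tilde y)}$ and the next-order correction in the quasi-static expansion of $\tilde\varphi_j$ each contribute $\mathcal{O}(\rho^2\,l_s)$, so the total error is $\mathcal{O}(\rho^2\,l_s(1+L_s^{-1}))$ as claimed.

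The main obstacle I anticipate is establishing, uniformly in $\rho$, the solvability of the rescaled boundary integral equation on $\partial M_j$ and its convergence to the elastostatic limit. This requires the Fredholm theory of the elastic single-layer (or combined) potential on Lipschitz domains together with invertibility of $\tilde S_0$ on $L^2(\partial M_j)^3$, and one must verify that the implicit constants in both the quasi-static Neumann expansion (inside each component) and the multiple-scattering Born expansion (across components) can be taken independent of the small parameters $\rho$ and $L_s^{-1}$; once this is in hand, everything else is bookkeeping of Taylor remainders and geometric series.
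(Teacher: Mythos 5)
Your proposal is correct and follows essentially the same route as the paper: the paper simply imports the single-layer (Foldy--Lax) representation and the coefficient system from the cited work of Challa and Sini, and then performs exactly the asymptotics you carry out by hand --- the degree $-1$ homogeneity of the Kelvin tensor forcing the rescaled density to be $\mathcal{O}(\rho^{-1})$ (hence surface moments of order $\rho$, linear in the constant datum $u^{in}(z_j)$), together with the $\mathcal{O}(L_s^{-1})$ decay of $\Pi^{(\omega)}(z_j,z_m)$ controlling the inter-component coupling. The only substantive difference is that you re-derive the layer-potential representation and its quasi-static limit from scratch rather than citing them.
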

The proof of Lemma \ref{lem2} relies essentially on the asymptotic expansions of $u_p^\infty$ and $u_s^\infty$ in the recent work \cite{Sini}, where
 the Lax-Foldy formulations for the Lam\'{e} system was justified without the condition (\ref{assumption-small}).
 The other references on the asymptotic expansions associated with small inclusions can be found in a series of works by H.
Ammari and H. Kang and their collaborators using integral equation methods; see e.g. \cite{Amm1, Amm2, Amm3, Amm4, Amm5, Amm6}. We also mention the monographs \cite{Martin} by P. Martin where the multiple scattering issues are well treated and \cite{DK} for analysis of acoustic, electromagnetic and elastic scattering problems at low frequencies. For the reader's convenience, we present a proof of Lemma~\ref{lem2} under the sparsity assumption (\ref{assumption-small}).

\begin{proof}[Proof of Lemma~\ref{lem2}]

By \cite[Remark 1.3]{Sini}, there exists a small number $\epsilon>0$ such that for $(l_s-1)\rho/L_s<\epsilon$
\be\label{eq3}\begin{split}
u_p^\infty(\hat{x}; D)=&\frac{1}{4\pi (\lambda+2\mu)}(\hat{x}\otimes\hat{x})\left[ \sum_{j=1}^{l_s} c_j e^{-ik_p \hat{x}\cdot z_j}\,Q_j  \right]+\mathcal{O}\left(\rho^2\,l_s(1+L_s^{-1}) \right),\\
u_s^\infty(\hat{x}; D)=&\frac{1}{4\pi \mu}(\textbf{I}-\hat{x}\otimes\hat{x})\left[ \sum_{j=1}^{l_s} c_j e^{-ik_s \hat{x}\cdot z_j}\,Q_j  \right]+\mathcal{O}\left(\rho^2\,l_s(1+L_s^{-1}) \right),
\end{split}
\en
where the vector coefficients $Q_j\in \C^{3}$, $j=1,2,\cdots,l_s$ are the unique solutions to the linear algebraic system
\be\label{eq2}
C_j^{-1}\, Q_j=-u^{in}(z_j)-\sum_{m=1,m\neq j}^{l_s} \Pi^{(\omega)}(z_j,z_m)\, Q_m,
\en
with $\Pi^{(\omega)}(z_j,z_m)$ denoting the Kupradze matrix (\ref{Pi}) and
$$
C_j:=\int_{\partial D_j} \Theta_j(y)ds(y)\in \C^{3\times 3}.
$$
Here, $\Theta_j$ is the solution matrix of the first kind integral equation
\be\label{eq1}
\int_{\partial D_j} \Pi^{(0)}(x,y) \Theta_j(y) ds(y)=\textbf{I},\quad x\in \partial D_j,
\en
where the matrix $\Pi^{(0)}(x,y)$, which denotes the the Kelvin matrix of the fundamental solution of the Lam\'e system with $\omega=0$, takes the form (see,  e.g., \cite[Chapter 2]{Kupradze} or \cite[Chapter 2.2]{Hsiao} )
\be\label{G03}
\Pi^{(0)}(x,y):=\frac{\lambda+3\mu}{8\pi \mu (\lambda+2\mu)} \frac{1}{|x-y|}\,\textbf{I}+\frac{\lambda+\mu}{8\pi\mu(\lambda+2\mu)}\frac{1}{|x-y|^3}\,\big((x-y)\otimes (x-y)\big)
\en
Since $\Pi^{(0)}(x,y)\sim |x-y|^{-1}$ as $x\rightarrow y$, it follows from (\ref{eq1}) that $\Theta_j(y)\sim \rho^{-3}$ for $y\in \partial D_j$, from which we get $C_j\sim \rho^{-1}$ for $j=1,2\cdots,l_s$ as $\rho\rightarrow +0$. Now, inserting the estimate of $C_j$ into (\ref{eq2}) and taking into account the fact that
\ben
\Pi^{(\omega)}(z_j,z_m)=\mathcal{O}(L_s^{-1})\quad \mbox{for}\quad j\neq m,
\enn
we obtain
\ben
Q_j=\rho\,\textbf{H}_j\, u^{in}(z_j)+\mathcal{O}(L_s^{-1}+\rho^2)\quad \mbox{as}\quad \rho/L_s\rightarrow +0,\quad j=1,2\cdots l_s,
\enn
where $\textbf{H}_j\in \C^{3\times 3}$ are some constant matrices independent of $\rho$ and $L$. Therefore, Lemma \ref{lem2} is proved by taking
$C_{p,j}=\textbf{H}_j\, d$,  $C_{s,j}=\textbf{H}_j\, d^\bot$.
\end{proof}

We are in a position to present the proof of Theorem \ref{S-P}.

\begin{proof}[Proof of Theorem \ref{S-P}]
We first consider the indicating function $I_{1}(z)$, $z\in\mathbb{R}^3$.
For notational convenience we write
\ben
A_j=A_j(z_j, \alpha,\beta):=C_{p,j}\,\alpha \,e^{ik_pz_j\cdot d}+C_{s,j}\,\beta\,e^{ik_sz_j\cdot d}\in \C^{3},\quad j=1,2,\cdots,l_s,
\enn with $C_{p,j}$, $C_{s,j}$ given as in Lemma \ref{lem2}.
Then, it is seen from Lemma \ref{lem2} that
\ben
\| u_p^\infty(\hat{x}; D)\|^2_{L^2}&=&\frac{\rho^2 }{4 (\lambda+2\mu)^2}\sum_{j=1}^{l_s} |A_j|^2+\mathcal{O}\left(\rho^3+\rho^2L^{-1}\right),\\
\| u_p^\infty(\hat{x}; D_j)\|^2_{L^2}&=&\frac{\rho^2 }{4 (\lambda+2\mu)^2}|A_j|^2+\mathcal{O}\left(\rho^3+\rho^2L_s^{-1}\right).
\enn Hence,
\be\label{Kj}
K_1^j=\frac{\| u_p^\infty(\hat{x}; D_j)\|^2_{L^2}}{\| u_p^\infty(\hat{x}; D)\|^2_{L^2}}=\widetilde{K}^j+\mathcal{O}(\rho+L_s^{-1}),\quad
\widetilde{K}^j:=\frac{A_j^2}{\sum_{j=1}^{l_s} |A_j|^2}.
\en
This proves (\ref{eq7}) for $m=1$. The case of using the S-part of the far-field pattern (i.e., $m=2$) can be treated  in an analogous way.

For the full-wave scenaio, namely when $m=3$, the orthogonality of $u_p^\infty$ and $ u_s^\infty$ should be used in the treatment. Since
$\langle \textbf{I}-\hat{x}\otimes\hat{x}, \hat{x}\otimes\hat{x} \rangle=0$,
 by applying Lemma \ref{lem2} again to $D$ and $D_j$, we have
\ben
\| u^\infty(\hat{x}; D)\|^2_{L^2}
&=&\frac{\rho^2 }{4}\left(\frac{1}{(\lambda+2\mu)^2}+\frac{1}{\mu^2} \right)  \sum_{j=1}^{l_s}  |A_j|^2+\mathcal{O}\left(\rho^3+\rho^2L_s^{-1}\right),\\
\| u^\infty(\hat{x}; D_j)\|^2_{L^2}
&=&\frac{\rho^2 }{4}\left(\frac{1}{(\lambda+2\mu)^2}+\frac{1}{\mu^2} \right)  |A_j|^2+\mathcal{O}\left(\rho^3+\rho^2L_s^{-1}\right).
\enn
Hence, the equality (\ref{eq7}) with $m=3$ is proved with the same $\widetilde{K}^j$ given in (\ref{Kj}).

To verify (\ref{eq5}) and (\ref{eq6}),
without loss of generality we only consider the indicating behavior of $I_{1}(z)$ in a small neighborhood of $z_j$ for some fixed $1\leq j\leq l_s$, i.e., $z\in{\rm neigh}(z_j)$. Clearly, under the assumption (\ref{assumption-small}) we have
\ben
\omega|z_{j'}-z|\sim\omega\,L_s\gg 1,\quad\mbox{for all}~z\in {\rm neigh}(z_j),\quad j'\neq j.
\enn
By using the Reimann-Lebesgue lemma about oscillating integrals and Lemma \ref{lem2} we can obtain
\be\no
&&\left|\left\langle u_p^\infty(\hat{x}; D), \sum_{j=1}^3(\hat{x}\otimes\hat{x})\textbf{e}_j e^{-ik_p\hat{x}\cdot z}\right\rangle\right|^2 \\ \no
&=&\frac{\rho^2\, |A_j|^2}{16\pi^2 (\lambda+2\mu)^2} \;\left\langle e^{-ik_p\hat{x}\cdot z_j}, e^{-ik_p\hat{x}\cdot z}  \right\rangle
+\mathcal{O}(\rho^3\,+\rho^2L_s^{-1})\\ \label{eq4}
&\leq & \frac{\rho^2\,|A_j|^2}{4 (\lambda+2\mu)^2} +\mathcal{O}(\rho^3\,+\rho^2L_s^{-1}),
\en
where the last inequality follows from the Cauchy-Schwartz inequality. Moreover, the strict inequality in (\ref{eq4}) holds if $z\neq z_j$ and the equal sign holds only when $z=z_j$. Therefore, by the definition of $I_{1}$,
\ben
I_1(z)\leq \widetilde{K}^j+\mathcal{O}(\rho+L_s^{-1}),
\enn
and only when $z=z_j$ the equality holds. This proves (\ref{eq5}) and (\ref{eq6}). The indicating behavior of $I_2$ and $I_3$ can be treated in the same manner.

The proof is completed.
\end{proof}

\section{Locating multiple extended scatterers}\label{section-extended}
In this section we consider the locating of multiple rigid scatterers of regular size by using a single incident plane wave. As discussed earlier in Introduction, it is extremely challenging to recover a generic rigid elastic scatterer by using a single far-field pattern. The scheme that we shall propose for locating multiple extended (namely, regular-size) scatterers requires a certain a priori knowledge of the underlying target objects; that is, their shapes must be from a certain known class. In what follows, we first describe the multiple extended scatterers for our study and then present the corresponding locating Scheme R.

For $j=1,2,\cdots,l_e$, set $r_j\in \R_+$ such that
\ben
r_j\in[R_0, R_1], \quad 0<R_0<R_1<+\infty,\quad R_0 \sim\mathcal{O}(1).
 \enn
 Let $E_j\subset \R^3$, $1\leq j\leq l_e$ denote a bounded simply-connected Lipschitz domain containing the origin. Throughout, we assume that ${\rm diam} (E_j)\sim 1$, $1\leq j\leq l_e$. Define the scaling operator $\Lambda_{r} E_j$ with $r\in \R_+$ to be the same one as that given in \eqref{scaling}.
Denote by $\mathcal{R}_{j}:=\mathcal{R}(\theta_j,\phi_j,\psi_j)\in SO(3)$, $1\leq j\leq l_e$, the 3D rotation matrix around the origin whose Euler angels are $\theta_j\in[0,2\pi],\phi_j\in[0,2\pi]$ and $\psi_j\in[0,\pi]$; and define $\mathcal{R}_j E:=\{\mathcal{R}_j x: x\in E\}$. For $z_j\in \R^3$, we let
\be\label{Omega}
\Omega=\bigcup_{j=1}^{l_e} \Omega_j,\quad \Omega_j:=z_j+\mathcal{R}_{j}\, \Lambda_{r_j}\, E_j,
\en
denote the extended target scatterer for our current study.
Obviously, $\Omega$ is a collection of scatterer components $\Omega_j$ that obtained by scaling, rotating and translating $E_j$ with the parameters $r_j, (\theta_j,\phi_j,\psi_j)$ and $z_j$, respectively. In the sequel, the parameter $z_j$, Euler angles $(\theta_j,\phi_j,\psi_j)$, number $r_j$ and the reference scatterer $E_j$ will be respectively referred to as the \emph{position}, \emph{orientation}, \emph{size} and \emph{shape} of the scatterer component $\Om_j$ in $\Omega$. For technical purpose, we impose the following sparsity assumption on the extended scatterer $\Omega$ introduced in \eqref{Omega},
\begin{equation}\label{eq:rs}
L_e=\min_{j\neq j', 1\leq j,j'\leq l_e} {\rm dist}(\Omega_j, \Omega_{j'})\gg 1.
\end{equation}
Furthermore, it is assumed that there exists an admissible reference scatterer space
\be\label{E0}
\mathscr{A}:=\{\Sigma_j\}_{j=1}^{l'}
\en
where each $\Sigma_j\subset \R^3$ is a bounded simply-connected Lipschitz domain containing the origin, such that for $\Omega$ in \eqref{Omega},
\begin{equation}\label{eq:adms}
E_j\in\mathscr{A}.
\end{equation}
For the admissible reference space $\mathscr{A}$ introduced in \eqref{E0}, we require that
\be\label{eq:admss}
\Sigma_j\neq \Sigma_{j'}\quad\mbox{for}\quad j\neq j',\quad 1\leq j,j'\leq l',
\en
and it is known in advance. The number $l'\in\mathbb{N}$ in \eqref{E0} is not necessarily equal to $l_e$ in \eqref{Omega}. Condition \eqref{eq:adms} implies that the shapes of target scatterer components must be known in advance. Nevertheless, it may happen that more than one scatterer component possesses the same shape, or some shapes from the admissible class $\mathscr{A}$ may not appear in the target scatterer components.

In the following, we shall develop Scheme R by using a single far-field pattern to locate the multiple components of the scatterer $\Omega$ described above.
%
%
The inverse problem could find important practical applications in the real world. For instance, in locating an unknown group of plastic-cased land mines or pipelines buried in dry soils, one has the a priori knowledge on the possible shapes of the target objects.


\subsection{Description of Scheme R}\label{Scheme-R}

For $h\in\mathbb{R}_+$, $h\ll 1$, let $\mathscr{N}_1$ be a suitably chosen finite index set such that $\{\mathcal{R}_j\}_{j\in\mathscr{N}_1}=\{\mathcal{R}(\theta_j,\phi_j,\psi_j)\}_{j\in\mathscr{N}_1}$ is an $h$-net of $SO(3)$. That is, for any rotation matrix $\mathcal{R}\in SO(3)$, there exists $j\in\mathscr{N}_1$ such that $\|\mathcal{R}_j-\mathcal{R}\|\leq h$. For a simply-connected domain $\Sigma$ containing the origin, we define
\begin{equation}\label{eq:a1}
\mathcal{R}_h \Sigma:=\{\mathcal{R}_j \Sigma\}_{j\in\mathscr{N}_1}.
\end{equation}
In an analogous manner, for $\Lambda_r$ with $r\in [R_0, R_1]$, we let $\mathscr{N}_2$ be a suitably chosen finite index set such that $\{r_j\}_{j\in\mathscr{N}_2}$ is an $h$-net of $[R_0, R_1]$. Define
\begin{equation}\label{eq:a1}
\Lambda_{h} \Sigma:=\{\Lambda_{r_j} \Sigma\}_{j\in\mathscr{N}_2}.
\end{equation}
Next, we augment the admissible reference space $\mathscr{A}$ to be
\begin{equation}\label{eq:a3}
\mathscr{A}_h=\mathcal{R}_h\Lambda_h \mathscr{A}=\bigcup_{j=1}^{l'}\{\mathcal{R}_h\Lambda_h \Sigma_j\}:=\{\widetilde{\Sigma}_j\}_{j=1}^{l''},
\end{equation}
where $l''$ denotes the cardinality of the discrete set $\mathscr{A}_h$. Indeed, $\mathscr{A}_h$ can be taken as an $h$-net of $\mathscr{A}$ in the sense that for any $\Sigma\in\mathscr{A}$, there exists $\widetilde{\Sigma}\in\mathscr{A}_h$ such that $d_H(\overline\Sigma,\overline{\widetilde\Sigma})\leq C h$, where $d_H$ denotes the Hausdorff distance and $C$ is a positive constant depending only on $\mathscr{A}$. We shall make the following two assumptions about the augmented admissible reference space $\mathscr{A}_h$:
\begin{description}
\item[(i)] $u_\tau^\infty(\hat{x}; \widetilde{\Sigma}_j)\neq u_\tau^\infty(\hat{x}; \widetilde{\Sigma}_{j'})$ for $\tau=s, p$ or $\emptyset$, and $j\neq j'$, $1\leq j,j'\leq l''$.
\item[(ii)] $\|u_\tau^\infty(\hat{x}; \widetilde{\Sigma}_j)\|_{L^2}\geq \|u_\tau^\infty(\hat{x}; \widetilde{\Sigma}_{j'})\|_{L^2}$ for $\tau=s, p$ or $\emptyset$, and $j<j'$, $1\leq j,j'\leq l''$.
\end{description}
Assumption (ii) can be fulfilled by reordering the elements in $\mathscr{A}_h$ if necessary.
For assumption (i), we recall the following well-known conjecture in the inverse elastic scattering theory:
\be\label{uniqueness}
u_\tau^\infty(\hat{x}; D_1)=u_\tau^\infty(\hat{x}; D_2)\quad\mbox{for all}\quad \hat{x}\in \mathbb{S}^2\quad\mbox{if and only if}\quad D_1=D_2,
\en
where $D_1$ and $D_2$ are two rigid elastic scatterers. \eqref{uniqueness} states that one can uniquely determine an elastic rigid scatterer by using a single far-field pattern. There is a widespread belief that \eqref{uniqueness} holds true, but there is  very limited progress in the literature, and still largely remains open. We refer to \cite{HH,MD,GS} for uniqueness results established by using infinitely many far-field measurements, and \cite{HKJ} for uniqueness in determining spherical or convex polyhedral rigid scatterers by using a single S-part far-field pattern. Nevertheless, since $\mathscr{A}_h$ is known, assumption (i) can be verified in advance.


In order to identify the multiple extended scatterers of $\Omega$ in (\ref{Omega}), we introduce the following $l''\times 3$ indicator functions:
\be\no
W^j_1(z)&=&\frac{1}{\|u_p^\infty(\hat{x}; \widetilde{\Sigma}_j)\|^2_{L^2}}\,\left|\left\langle  u_p^\infty(\hat{x}; \Om),\; e^{-ik_p\hat{x}\cdot z}\,u_p^\infty(\hat{x}; \widetilde{\Sigma}_j)        \right\rangle \right|^2,\\ \label{W}
W^j_2(z)&=&\frac{1}{\|u_s^\infty(\hat{x}; \widetilde{\Sigma}_j)\|^2_{L^2}}\,\left|\left\langle  u_s^\infty(\hat{x}; \Om),\; e^{-ik_s\hat{x}\cdot z}\,u_s^\infty(\hat{x}; \widetilde{\Sigma}_j)        \right\rangle \right|^2,\\ \no
W^j_3(z)&=&\frac{1}{\|u^\infty(\hat{x}; \widetilde{\Sigma}_j)\|^2_{L^2}}\,\left|\left\langle  u^\infty(\hat{x}; \Om),\; e^{-ik_p\hat{x}\cdot z}\,u_p^\infty(\hat{x}; \widetilde{\Sigma}_j)+e^{-ik_s\hat{x}\cdot z}\,u_s^\infty(\hat{x}; \widetilde{\Sigma}_j)        \right\rangle \right|^2,
\en  where $z\in\mathbb{R}^3$ and $\widetilde{\Sigma}_j\in {\mathscr{A}}_h$ for $j=1,2,\cdots l''$.

Next, we present a key theorem on the indicating behavior of these indicator functions, which
forms the basis of our Scheme R. Recall that $\alpha,\beta$ are the coefficients attached to $u^{in}_p$ and $u^{in}_s$, respectively, in the expression of $u^{in}$ given in (\ref{plane}).
\begin{thm}\label{Th2}
Suppose that $\alpha\,\beta=0$ and that $\widetilde{\Sigma}_1\in \mathscr{A}_h$ is of the following form
\begin{equation}\label{eq:b1}
\widetilde{\Sigma}_1=\mathcal{R}_{j_{\sigma}}\Lambda_{r_{j_{\tau}}}\Sigma_{j_0},\quad \Sigma_{j_0}\in\mathscr{A},\ j_{\sigma}\in\mathscr{N}_1,\ j_{\tau}\in\mathscr{N}_2.
\end{equation}
Suppose that in $\Omega$ given by \eqref{Omega}, there exists $J_0\subset\{1,2,\ldots,l_e\}$ such that for $j\in J_0$, the component $\Omega_j=\mathcal{R}_j\Lambda_{r_j} E_j$ satisfies
\begin{equation}\label{eq:b2}
(i)~E_j=\Sigma_{j_0};\ \ (ii)~\|\mathcal{R}_j-\mathcal{R}_{j_{\sigma}}\|\leq h;\ \ (iii)~\|r_{j}-r_{j_{\tau}}\|\leq h;
\end{equation}
whereas for $j\in\{1,2,\ldots,l_e\}\backslash J_0$, at least one of the conditions in \eqref{eq:b2} is not fulfilled by the scatterer component $\Omega_j$. Then for each $z_j$, $1\leq j\leq l_e$, there exists an open neighborhood of $z_j$, $neigh(z_j)$, such that
\begin{enumerate}[(i)]
\item if $j\in J_0$, then
\begin{equation}\label{eq:b3}
W_m^1(z)\leq 1+\mathcal{O}\left(\frac 1 L_e+h\right),\quad \forall z\in {\rm neigh}(z_j),\ \ m=1,2,3.
\end{equation}
Moreover, the equality relation holds in \eqref{eq:b3} only when $z=z_j$. That is, $z_j$ is a local maximum point for $W_m^1(z)$.

\item if $j\in\{1,2,\ldots,l\}\backslash J_0$, then there exists $\epsilon_0\in\mathbb{R}_+$ such that
\begin{equation}\label{eq:b4}
W_m^1(z)\leq 1-\epsilon_0+\mathcal{O}\left(\frac{1}{L_e}+h\right), \quad \forall z\in {\rm neigh}(z_j),\ \ m=1,2,3.
\end{equation}
\end{enumerate}

%
\end{thm}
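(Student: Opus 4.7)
The plan is to reduce the multi-component analysis of $W_m^1$ to a single-component one via the sparsity assumption (\ref{eq:rs}) and then to localize around each $z_{j_0}$ using the decay of oscillatory integrals on $\mathbb{S}^2$. First, by translation invariance of the Navier system and the asymptotics in (\ref{far}), each component $\Omega_j=z_j+\Omega_j^0$, with $\Omega_j^0:=\mathcal{R}_j\Lambda_{r_j}E_j$, satisfies the translation identity
\[
u_\tau^\infty(\hat{x};\Omega_j)=e^{ik_\iota z_j\cdot d}\,e^{-ik_\tau\hat{x}\cdot z_j}\,u_\tau^\infty(\hat{x};\Omega_j^0),\quad \tau\in\{p,s\},
\]
where $\iota\in\{p,s\}$ is the single mode of incidence selected by the hypothesis $\alpha\beta=0$. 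An integral-equation estimate of the cross interactions under (\ref{eq:rs}), in the same spirit as the Lax--Foldy expansion that gave (\ref{eq3}) but now for well-separated extended scatterers, yields $u_\tau^\infty(\hat{x};\Omega)=\sum_{j=1}^{l_e}u_\tau^\infty(\hat{x};\Omega_j)+\mathcal{O}(L_e^{-1})$. For $m=3$ the pointwise orthogonality $\langle\textbf{I}-\hat{x}\otimes\hat{x},\hat{x}\otimes\hat{x}\rangle=0$ decouples $u^\infty=u_p^\infty+u_s^\infty$ into its $P$- and $S$-parts in the inner products defining $W_3^1$, so the arguments for $m=1,2$ transfer unchanged.

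Substituting into the numerator of $W_m^1(z)$ gives, up to $\mathcal{O}(L_e^{-1})$, a sum
\[
\sum_{j=1}^{l_e}e^{ik_\iota z_j\cdot d}\int_{\mathbb{S}^2}e^{ik_\tau\hat{x}\cdot(z-z_j)}\,u_\tau^\infty(\hat{x};\Omega_j^0)\cdot\overline{u_\tau^\infty(\hat{x};\widetilde{\Sigma}_1)}\,d\hat{x}.
\]
For $z$ in a sufficiently small neighborhood $\mathrm{neigh}(z_{j_0})$, the sparsity $L_e\gg 1$ forces $|z-z_j|\gtrsim L_e$ whenever $j\neq j_0$. Since the far-field patterns are real-analytic on $\mathbb{S}^2$, stationary-phase on $\mathbb{S}^2$ (equivalently the Riemann--Lebesgue-type bound already invoked in (\ref{eq4})) contributes $\mathcal{O}(L_e^{-1})$ for each such term, leaving only the $j=j_0$ integral up to $\mathcal{O}(L_e^{-1})$.

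For case (i), $j_0\in J_0$, the defining conditions (\ref{eq:b2}) together with Lipschitz continuous dependence of the far-field pattern on the scatterer (in the Hausdorff metric for Lipschitz reference domains) give $u_\tau^\infty(\hat{x};\Omega_{j_0}^0)=u_\tau^\infty(\hat{x};\widetilde{\Sigma}_1)+\mathcal{O}(h)$ in $L^2(\mathbb{S}^2)^3$, so the surviving integral becomes
\[
e^{ik_\iota z_{j_0}\cdot d}\int_{\mathbb{S}^2}e^{ik_\tau\hat{x}\cdot(z-z_{j_0})}\,|u_\tau^\infty(\hat{x};\widetilde{\Sigma}_1)|^2\,d\hat{x}+\mathcal{O}(h+L_e^{-1}).
\]
Its modulus is bounded by $\|u_\tau^\infty(\widetilde{\Sigma}_1)\|_{L^2}^2$ via the triangle inequality applied to the nonnegative integrand, with equality attained only when the oscillatory factor is identically $1$, i.e.\ $z=z_{j_0}$, so (\ref{eq:b3}) follows and $z_{j_0}$ is the unique local maximizer. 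For case (ii), $j_0\notin J_0$, at least one of the conditions in (\ref{eq:b2}) fails, so rounding $(\mathcal{R}_{j_0},r_{j_0})$ to the $h$-nets underlying (\ref{eq:a3}) produces some $\widetilde{\Sigma}_{j^\ast}\in\mathscr{A}_h$ with $j^\ast\neq 1$ and $u_\tau^\infty(\hat{x};\Omega_{j_0}^0)=u_\tau^\infty(\hat{x};\widetilde{\Sigma}_{j^\ast})+\mathcal{O}(h)$. Cauchy--Schwarz bounds the surviving integral by $\|u_\tau^\infty(\widetilde{\Sigma}_{j^\ast})\|_{L^2}\|u_\tau^\infty(\widetilde{\Sigma}_1)\|_{L^2}$, which by assumptions (i) and (ii) on $\mathscr{A}_h$ is strictly smaller than $\|u_\tau^\infty(\widetilde{\Sigma}_1)\|_{L^2}^2$, giving (\ref{eq:b4}).

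The main obstacle is turning the strict inequality in case (ii) into a uniform gap $\epsilon_0>0$ independent of $z\in\mathrm{neigh}(z_{j_0})$ and of the combinatorial choice of $j^\ast$. The subtlety is that the oscillatory phase $e^{ik_\tau\hat{x}\cdot(z-z_{j_0})}$ can partially realign the patterns $u_\tau^\infty(\widetilde{\Sigma}_{j^\ast})$ and $u_\tau^\infty(\widetilde{\Sigma}_1)$ as $z$ varies, and the quantitative distance to the Cauchy--Schwarz equality case must be controlled. However, equality in Cauchy--Schwarz would force $u_\tau^\infty(\widetilde{\Sigma}_{j^\ast})$ to be a unimodular scalar multiple of $u_\tau^\infty(\widetilde{\Sigma}_1)$, which the distinctness in (i) combined with the ordering in (ii) rules out. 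Since $\mathscr{A}_h\setminus\{\widetilde{\Sigma}_1\}$ is finite and $\overline{\mathrm{neigh}(z_{j_0})}$ is compact, a standard compactness argument then yields the required uniform $\epsilon_0>0$ and completes the proof.
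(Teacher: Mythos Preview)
Your proposal is correct and follows essentially the same route as the paper: decompose $u_\tau^\infty(\hat{x};\Omega)$ into the sum over components up to $\mathcal{O}(L_e^{-1})$ (the paper's Lemma~\ref{lem1}), apply the translation identity (Lemma~\ref{translation-relation}), suppress the distant terms by Riemann--Lebesgue, approximate the surviving component by $\widetilde{\Sigma}_1$ to $\mathcal{O}(h)$, and conclude with Cauchy--Schwarz. The only cosmetic difference is that the paper packages the $\mathcal{O}(h)$ approximation via explicit rotation and scaling relations (Lemmas~\ref{Lem:rotation} and~\ref{Lem:scale}) rather than your single Lipschitz-dependence appeal, and your handling of case~(ii) --- isolating the Cauchy--Schwarz equality obstruction and closing it by compactness over the finite set $\mathscr{A}_h$ --- is actually more explicit than the paper's one-line ``can directly verify''.
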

\begin{rem}
The condition $\alpha\beta=0$ implies that  $W_m^j$'s in (\ref{W}) are valid for incident plane pressure or shear waves only. Following the proof of Theorem \ref{Th2}, one can formulate the indicator functions for general elastic plane waves of the form (\ref{plane}); see Remark \ref{Re:Ex} at the end of this section.
\end{rem}
In Theorem~\ref{Th2}, it may happen that $J_0=\emptyset$. In this case, there is no scatterer in $\Omega$ which is of the shape $\Sigma_{j_0}$. Clearly, by using the indicating behavior of the functional $W_m^1(z)$ presented in Theorem~\ref{Th2}, one can locate all the scatterer components possessing the shape $\Sigma_{j_0}$. After the locating of those scatterers of the shape $\Sigma_{j_0}$, one can exclude them from the searching region. Moreover, by Lemmas~\ref{lem1} and \ref{translation-relation} in the following, one can calculate the far-field pattern generated by the remaining scatterer components. With the updated far-field pattern, one can then use $\widetilde{\Sigma}_2$ as the reference, and proceed as before to locate all the scatterer components of $\Omega$ possessing the same shape as $\widetilde{\Sigma}_2$. Clearly, this procedure can be carried out till we find all the scatterer components of $\Omega$. In summary, Scheme R is read as follows.

\bigskip

\begin{tabularx}{\textwidth }{>{\bfseries}lX}
\toprule
Scheme R&  Locating extended scatterers of $\Omega$ in \eqref{Omega}.
\\ \midrule
Step 1& For the admissible reference scatterer class $\mathscr{A}$ in (\ref{E0}), formulate the augmented admissible class $\mathscr{A}_h$ in (\ref{eq:a3}). \\ \midrule
Step 2 & Collect in advance the P-part ($m=1$), S-part ($m=2$) or the full far-field data ($m=3$) associated with the admissible reference scatterer class ${\mathscr{A}}_h$ corresponding to a single incident plane wave of the form (\ref{plane}). Reorder ${\mathscr{A}}_h$ if necessary to make it satisfy assumption (ii), and also verify the generic assumption (i). \\ \midrule
Step 3 & For an unknown rigid scatterer $\Omega$ in (\ref{Omega}),
  collect the P-part, S-part or the full far-field data by sending the same detecting plane wave as specified in {\bf Step 2}.  \\ \midrule
Step 4 & Select a sampling region with a mesh $\mathcal{T}_h$ containing $\Omega$. \\ \midrule
Step 5 & Set $j=1$. \\ \midrule
Step 6 & For each sampling point $z\in\mathcal{T}_h$, calculate $W^j_{m}(z)$ ($m=1,2,3$) according to available far-field data for $\Omega$.\\ \midrule
Step 7 & Locate all those significant local maximum points of $W^j_{m}(z)$ satisfying $W^j_{m}(z)\approx 1$ for the scatterer
components of the form $z+\widetilde{\Sigma}_j$. Let $z_\eta$, $\eta=1,\ldots,\eta_0$ be the local maximum points found this step.\\ \midrule
Step 8 & Remove all those $z+\widetilde{\Sigma}_j$ found in {\bf Step 6} from the mesh $\mathcal{T}_h$.  \\ 
\bottomrule
\end{tabularx}

\begin{tabularx}{\textwidth }{>{\bfseries}lX}
\toprule
Scheme R&  Continue
\\ \midrule
Step 9 & Update the far-field patterns according to the following formulae
\ben
u_p^{\infty, new}&=&u_p^\infty(\hat x; d,d^\bot, \alpha,\beta, \Omega)- u_p^\infty(\hat x; d,d^\bot, \alpha,0,\widetilde{\Sigma}_j)\sum_{\eta=1}^{\eta_0} e^{ik_p (d-\hat{x})\cdot z_\eta}\\
&&- u_p^\infty(\hat x; d,d^\bot, 0,\beta,\widetilde{\Sigma}_j)\sum_{\eta=1}^{\eta_0} e^{i(k_sd- k_p\hat{x})\cdot z_\eta},\\
u_s^{\infty, new}&=&u_s^\infty(\hat x; d,d^\bot, \alpha,\beta, \Omega)- u_s^\infty(\hat x; d,d^\bot, \alpha,0,\widetilde{\Sigma}_j)\sum_{\eta=1}^{\eta_0} e^{i(k_p d-k_s\hat{x})\cdot z_\eta}\\
&&- u_s^\infty(\hat x; d,d^\bot, 0,\beta,\widetilde{\Sigma}_j)\sum_{\eta=1}^{\eta_0} e^{ik_s(d-\hat{x})\cdot z_\eta},\\
 u^{\infty, new}&=&u_p^{\infty, new}+u_s^{\infty, new}.
\enn \\ \midrule
Step 10 & If $j=l''$, namely, the maximum number of the reference scatterers reaches, then stop the reconstruction; otherwise set $j=j+1$, and go to {\bf Step 6}. \\
\bottomrule
\end{tabularx}

\subsection{Proof of Theorem \ref{Th2}}\label{Proof-Th2}

%

Throughout the present section, we let $\nu$ denote the unit normal vector to $\partial\Omega$ pointing into $\R^3\backslash\overline{\Omega}$.
 Denote the linearized strain tensor  by
\begin{equation} \label{strain_2}
\varepsilon(u)\ :=\ \frac{1}{2}\bigl(\nabla u+\nabla u^\top\bigr)\ \in\ \R^{3\times 3},
\end{equation}
 where $\nabla u\in\R^{3\times 3}$ and
$\nabla u^\top$ stand for the Jacobian matrix of $u$ and its adjoint, respectively.
By Hooke's law the strain tensor is related to the stress tensor via the identity
\begin{equation} \label{stress_2}
\sigma(u)\ =\ \lambda\,(\divv u)\,\textbf{I}\ +\ 2\mu\,\varepsilon(u)\ \in\
\R^{3\times 3}.
\end{equation}
The surface traction (or the stress operator) on $\partial \Omega$ is given by
\be\label{traction}
T_{\nu} u:=\sigma(u)\nu=(2\mu \nu\,\cdot \grad+\la\,\nu\,\divv+\mu\nu\,\times \curl)\,u.
\en


We next present several auxiliary lemmas.

\begin{lem}\label{lem1}
Let $\Omega$ be a scatterer with multiple components given in (\ref{Omega}). Under the assumption (\ref{assumption-small}), we have
\ben
u^\infty(\hat{x}; \Omega)=\sum_{j=1}^{l_e}\,u^\infty(\hat{x}; \Omega_j)+\mathcal{O}(L_e^{-1}).
\enn
\end{lem}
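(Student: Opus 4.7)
The plan is to use a boundary integral equation approach, writing the scattered field from $\Omega$ as a sum of potentials localized on the individual components:
\begin{equation*}
u^{sc}(x) = \sum_{j=1}^{l_e} (\mathcal{P}_j \varphi_j)(x), \quad x \in \R^3\setminus\overline{\Omega},
\end{equation*}
where $\mathcal{P}_j$ is a combined elastic single- and double-layer potential on $\partial\Omega_j$ (chosen to bypass the irregular-frequency difficulty), and $\varphi_j$ is an unknown density. Enforcing the Dirichlet condition $u^{sc}=-u^{in}$ on each $\partial\Omega_j$ yields a coupled block system
\begin{equation*}
A_{jj}\varphi_j + \sum_{j'\neq j} A_{jj'}\varphi_{j'} = -u^{in}|_{\partial\Omega_j},\qquad j=1,\ldots,l_e,
\end{equation*}
in which $A_{jj}$ is the standard boundary integral operator for the isolated $\Omega_j$, while $A_{jj'}$ with $j\neq j'$ is the restriction to $\partial\Omega_j$ of the potential generated by a density on $\partial\Omega_{j'}$.

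Next I would exploit the sparsity assumption $L_e\gg 1$ in \eqref{eq:rs}. From the explicit form of the Kupradze tensor \eqref{Pi} one has $|\Pi(x,y)| + |\nabla_y \Pi(x,y)| = \mathcal{O}(|x-y|^{-1})$, so since $\mathrm{dist}(\partial\Omega_j,\partial\Omega_{j'})\geq L_e$ for $j\neq j'$ and each $\partial\Omega_j$ has uniformly bounded surface area (the sizes $r_j$ lie in $[R_0,R_1]$ and the shapes come from a fixed admissible class), we get $\|A_{jj'}\| = \mathcal{O}(L_e^{-1})$. The diagonal $A_{jj}$ are boundedly invertible by well-posedness of the direct Dirichlet problem, with uniform bounds because the shapes and sizes belong to a compact admissible family. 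A block-Neumann perturbation of the coupled system then yields
\begin{equation*}
\varphi_j = \varphi_j^{(0)} + \mathcal{O}(L_e^{-1}),\qquad \varphi_j^{(0)} := A_{jj}^{-1}(-u^{in}|_{\partial\Omega_j}),
\end{equation*}
where $\varphi_j^{(0)}$ is precisely the density for the isolated scattering problem by $\Omega_j$. Since the far-field map is a bounded linear functional of the boundary density and its action on $\mathcal{P}_j\varphi_j^{(0)}$ reproduces $u^\infty(\hat x;\Omega_j)$, summing over $j$ gives the desired identity.

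The main obstacle is to make the $\mathcal{O}(L_e^{-1})$ remainder hold uniformly in $\hat x\in\s^2$ and across the admissible class. One needs a uniform bound $\|A_{jj}^{-1}\|\leq C$, which rests on Fredholm theory combined with the combined-layer ansatz (to exclude spurious interior eigenfrequencies), together with equicontinuity of the integral operators as the shape varies in the admissible class. Once that is in place, convergence of the block Neumann series reduces to demanding $(l_e-1)C'/L_e$ to be small, which is automatic since $l_e$ is fixed and $L_e\gg 1$; the resulting implied constant depends only on the Lam\'e parameters, the admissible class, and $l_e$.
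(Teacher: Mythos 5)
Your proposal is correct and follows essentially the same route as the paper: a combined single- and double-layer ansatz localized on each component, a coupled block boundary integral system whose off-diagonal blocks are $\mathcal{O}(L_e^{-1})$ by the decay of the Kupradze tensor, and a Neumann-series perturbation showing each density equals the isolated-scatterer density up to $\mathcal{O}(L_e^{-1})$. The only differences are cosmetic: the paper carries out the argument for $l_e=2$ for simplicity, while you treat general $l_e$ and add remarks on uniformity over the admissible class.
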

\begin{proof} For simplicity we assume $l_e=2$.
We begin with the single- and double-layer potential operators in elasticity. For $j=1,2$, let
\begin{align}
(S_j \varphi)(x):=&2\int_{\partial \Omega_j} \Pi(x,y)\varphi(y)ds(y),\quad \varphi\in C(\partial \Omega_j),\quad x\in \partial \Omega_j,\;\label{eq:sl}\\
(K_j \varphi)(x):=&2\int_{\partial \Omega_j}  \Xi(x,y)\varphi(y)ds(y),\quad\varphi\in C(\partial \Omega_j),\quad x\in \partial \Omega_j,\label{eq:dl}
\end{align}
where $\Xi(x,y)$ is a matrix-valued function whose $j$-th column vector is defined by
\ben
\Xi(x,y)^\top\, \textbf{e}_j:=T_{\nu(y)}(\Pi(x,y)\,\textbf{e}_j)\quad\mbox{on}\quad\partial\Omega_j,\quad\mbox{for}\; x\neq y,\quad j=1,2,3.
\enn
Recall that the superscript $(\cdot)^\top$ denotes the transpose, $\textbf{e}_j\in \C^{3\times 1}$ the usual cartesian unit vectors and $T_{\nu(y)}$ the stress operator defined in (\ref{traction}).
Under the regularity assumption $\partial \Omega_j\in C^2$, it was proved in \cite{HH} that the scattered field $u^{sc}(x; \Omega_j)$ corresponding to $\Omega_j$ can be represented as
\ben
u^{sc}(x; \Omega_j)=\int_{\partial \Omega_j}  \Xi(x,y)\varphi_j(y)ds(y)+i \,\int_{\partial \Omega_j} \Pi(x,y)\varphi_j(y)ds(y),\quad x\in \R^3\backslash \overline{ \Omega }_j,
\enn
where the density function $\varphi_j\in C(\partial \Omega_j)$ is given by
\ben
\varphi_j=-2(I+K_j+i S_j)^{-1} u^{in}|_{\partial \Omega_j}, \quad j=1,2.
\enn
To prove the lemma for the scatterer $ \Omega =\Omega_1\cup \Omega_2$, we make use of the ansatz
\ben
u^{sc}(x;  \Omega )=\sum_{j=1,2}\left\{\int_{\partial \Omega_j}  \Xi(x,y)\phi_j(y)ds(y)+i \,\int_{\partial \Omega_j} \Pi(x,y)\phi_j(y)ds(y)\right\},\quad x\in \R^3\backslash \overline{ \Omega },
\enn
with $\phi_j\in C(\partial \Omega_j)$. Using the Dirichlet boundary condition $u^{sc}+u^{in}=0$ on each $\partial \Omega_j$,  we obtain the integral equations
\be\label{integral}
\begin{pmatrix}
I+K_1+i S_1 & J_2 \\ J_1 & I+K_2+i S_2
\end{pmatrix}
\begin{pmatrix}
\phi_1 \\ \phi_2
\end{pmatrix}
=-2\begin{pmatrix}
u^{in}|_{\partial \Omega_1} \\ u^{in}|_{\partial \Omega_2}
\end{pmatrix},
\en
where the operators $J_1: C(\partial \Omega_1)\rightarrow C(\partial \Omega_2),
J_2: C(\partial \Omega_2)\rightarrow C(\partial \Omega_1)$ are defined respectively by \ben
J_1 \phi_1&=&2\left\{\int_{\partial \Omega_1}  \Xi(x,y)\phi_1(y)ds(y)+i \,\int_{\partial \Omega_1} \Pi(x,y)\phi_1(y)ds(y)\right\},\quad x\in \partial \Omega_2,\\
J_2 \phi_2&=&2\left\{\int_{\partial \Omega_2}  \Xi(x,y)\phi_2(y)ds(y)+i \,\int_{\partial \Omega_2} \Pi(x,y)\phi_2(y)ds(y)\right\},\quad x\in \partial \Omega_1.
\enn
Since $L_e\gg 1$ (cf. (\ref{eq:rs})), using the fundamental solution (\ref{Pi}), it is  readily to estimate
\ben
\|J_1 \phi_1\|_{C(\partial \Omega_2)}\leq C_1 L_e^{-1}\|\phi_1\|_{C(\partial \Omega_1)},\quad
\|J_2 \phi_2\|_{C(\partial \Omega_1)}\leq C_2 L_e^{-1}\|\phi_2\|_{C(\partial \Omega_2)},\quad C_1, C_2>0.
\enn
Hence, it follows from (\ref{integral}) and the invertibility of $I+K_j+i S_j: C(\partial \Omega_j)\rightarrow C(\partial \Omega_j) $ that
\ben
\begin{pmatrix}
\phi_1 \\ \phi_2
\end{pmatrix}&=&
\begin{pmatrix}
(I+K_1+i S_1)^{-1} & 0 \\ 0 & (I+K_2+i S_2)^{-1}
\end{pmatrix}
\begin{pmatrix}
-2u^{in}|_{\partial \Omega_1} \\ -2u^{in}|_{\partial \Omega_2}
\end{pmatrix}+\mathcal{O}(L_e^{-1})\\
&=&\begin{pmatrix}
\varphi_1 \\ \varphi_2
\end{pmatrix}+\mathcal{O}(L_e^{-1}).
\enn
This implies that
\ben
u^{sc}(x;  \Omega )=u^{sc}(x; \Omega_1)+u^{sc}(x; \Omega_2)+\mathcal{O}(L_e^{-1})\quad\mbox{as}\quad L_e\rightarrow\infty,
\enn
which finally leads   to
\ben
u^\infty(\hat{x};  \Omega )=u^\infty(\hat{x}; \Omega_1)+u^\infty(\hat{x}; \Omega_2)+\mathcal{O}(L_e^{-1}).
\enn
\end{proof}
\begin{rem}
In the proof of Lemma \ref{lem1}, we require that the boundary $\partial\Omega$ is $C^2$ continuous. This is mainly due to the requirements of the mapping properties of the single- and double-layer potential operators (cf. \eqref{eq:sl} and \eqref{eq:dl}) in the proof. This regularity assumption can be relaxed to be Lipschitz continuous by using a similar argument, together with the mapping properties of the layer potential operators defined on Lipschitz surfaces (cf. \cite{McL}).
\end{rem}

In what follows, we shall establish the relation between far-field patterns for translated, rotated and scaled elastic bodies.
For $D\subset\R^3$ and $\textbf{a}=(a_1,a_2,a_3)\in \R^3$,
we write $D_\textbf{a}=\textbf{a}+D$ for simplicity.

\begin{lem}\label{translation-relation} Assume $\partial D$ is Lipschitz.
If $\alpha=1$, $\beta=0$, then
\be\label{Far-field-relation-p}
u^\infty_p(\hat{x}; D_\textbf{a})=u^{\infty}_p(\hat{x}; D)\;e^{ik_p(d-\hat{x})\cdot\textbf{a}},\quad
u^\infty_s(\hat{x}; D_\textbf{a})=u^{\infty}_s(\hat{x}; D)\;e^{i(k_pd-k_s \hat{x})\cdot\textbf{a}}.
\en
If $\alpha=0$, $\beta=1$, then
\be\label{Far-field-relation-s}
u^\infty_p(\hat{x}; D_\textbf{a})=u^{\infty}_p(\hat{x}; D)\;e^{i(k_sd-k_p\hat{x})\cdot\textbf{a}},\quad
u^\infty_s(\hat{x}; D_\textbf{a})=u^{\infty}_s(\hat{x}; D)\;e^{ik_s(d-\hat{x})\cdot\textbf{a}}.
\en
\end{lem}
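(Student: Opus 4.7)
The plan is to exploit translation invariance of the Navier system together with uniqueness of the exterior Dirichlet problem, and then track carefully how the resulting phase factors interact with the two-speed asymptotic expansion \eqref{far}. I focus on the case $\alpha=1,\beta=0$; the case $\alpha=0,\beta=1$ is identical after swapping $k_p$ with $k_s$ in the incident phase.

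First I would set $v(x):=u^{sc}(x+\mathbf{a};D_\mathbf{a})$, where $u^{sc}(\,\cdot\,;D_\mathbf{a})$ is the scattered field associated with the shifted body $D_\mathbf{a}=\mathbf{a}+D$ and the incident wave $u^{in}(x)=d\,e^{ik_p x\cdot d}$. Because the operator $\Delta^*+\omega^2$ has constant coefficients, $v$ solves the Navier equation in $\R^3\setminus\overline{D}$ and satisfies Kupradze's radiation condition. On $\partial D$ one has $v(x)=-u^{in}(x+\mathbf{a})=-e^{ik_p \mathbf{a}\cdot d}u^{in}(x)$. By linearity and well-posedness of the exterior Dirichlet problem (cited from \cite{Kupradze,BP2008}), the scattered field $u^{sc}(\,\cdot\,;D)$ produced by $u^{in}$ and by $D$ satisfies the same problem up to the multiplicative constant $e^{ik_p \mathbf{a}\cdot d}$, and uniqueness therefore gives the pointwise identity
\begin{equation*}
u^{sc}(y;D_\mathbf{a})=e^{ik_p\,\mathbf{a}\cdot d}\,u^{sc}(y-\mathbf{a};D),\qquad y\in\R^3\setminus\overline{D_\mathbf{a}}.
\end{equation*}

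Next I would pass this identity to the far field. The key observation is that although the shift $-\mathbf{a}$ is a single translation in the spatial variable, the two summands in the asymptotic expansion \eqref{far} carry different wave numbers, so they pick up different phase corrections. As $|y|\to\infty$ one has $|y-\mathbf{a}|=|y|-\hat{y}\cdot\mathbf{a}+\mathcal{O}(|y|^{-1})$ and $\widehat{y-\mathbf{a}}=\hat{y}+\mathcal{O}(|y|^{-1})$, so inserting this into \eqref{far} produces
\begin{equation*}
u^{sc}(y-\mathbf{a};D)=\frac{e^{ik_p|y|}}{4\pi(\lambda+\mu)|y|}e^{-ik_p\hat{y}\cdot\mathbf{a}}u_p^{\infty}(\hat{y};D)+\frac{e^{ik_s|y|}}{4\pi\mu|y|}e^{-ik_s\hat{y}\cdot\mathbf{a}}u_s^{\infty}(\hat{y};D)+\mathcal{O}\!\left(|y|^{-2}\right).
\end{equation*}
Multiplying by $e^{ik_p\mathbf{a}\cdot d}$, comparing with the far-field expansion of $u^{sc}(y;D_\mathbf{a})$, and matching the normal and tangential pieces separately (using the uniqueness of the P/S decomposition noted just after \eqref{eq:full fp}) then yields \eqref{Far-field-relation-p}. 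For $\alpha=0,\beta=1$ the same argument applies with $e^{ik_s\mathbf{a}\cdot d}$ replacing the incident phase, producing \eqref{Far-field-relation-s}.

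The only delicate point I anticipate is the asymmetric appearance of $k_p$ and $k_s$ in the exponentials of \eqref{Far-field-relation-p}–\eqref{Far-field-relation-s}: the incident wavenumber (coming from the boundary data factor $e^{ik_p\,\mathbf{a}\cdot d}$ or $e^{ik_s\,\mathbf{a}\cdot d}$) is the same for both P- and S-parts of the far-field pattern, while the observation wavenumber differs because of the two-speed expansion of $u^{sc}$. Keeping these two sources of phase straight is really the only substantive book-keeping in the proof; everything else is translation invariance and the standard stationary-phase-free asymptotics of \eqref{far}.
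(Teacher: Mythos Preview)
Your argument is correct, and the first half coincides exactly with the paper's: both of you establish the pointwise identity
\[
u^{sc}(y;D_{\mathbf a})=e^{ik_p\,\mathbf a\cdot d}\,u^{sc}(y-\mathbf a;D)
\]
by translation invariance of $\Delta^*+\omega^2$ and uniqueness of the exterior Dirichlet problem. Where you diverge is in passing from this identity to the far-field relations. The paper invokes the boundary integral representations of $u_p^\infty$ and $u_s^\infty$ from \cite{AK} (surface integrals over $\partial D_{\mathbf a}$ involving $T_\nu$ and the kernels $\hat x\otimes\hat x\,e^{-ik_p\hat x\cdot y}$, $(\mathbf I-\hat x\otimes\hat x)\,e^{-ik_s\hat x\cdot y}$) and then performs the change of variable $y=z+\mathbf a$ inside those integrals. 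You instead substitute the asymptotics $|y-\mathbf a|=|y|-\hat y\cdot\mathbf a+\mathcal{O}(|y|^{-1})$ directly into the two-speed expansion \eqref{far} and match radial and tangential parts.

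Your route is more elementary: it avoids importing the integral formulae for the far-field patterns and needs nothing beyond \eqref{far}, the analyticity of $u_\tau^\infty$ on $\mathbb{S}^2$, and the orthogonality of the P- and S-parts noted after \eqref{eq:full fp}. The paper's route, by contrast, makes the computation fully explicit at the level of boundary data and does not require an argument (however standard) about the uniqueness of the leading-order coefficients in a superposition of two spherical waves with distinct wavenumbers. Both approaches handle the Lipschitz case equally well. Your bookkeeping on the asymmetric phases---incident wavenumber fixed, observation wavenumber depending on the P/S branch---is correct and is exactly the point of the lemma.
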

\begin{proof}  We first consider the case of incident plane pressure waves, i.e., $\alpha=1$, $\beta=0$. Denote by $u^\infty(\hat{x};D)=u^\infty(\hat{x}; D, d)$
the far-field pattern corresponding to the rigid scatter $D$ with the incident direction $d\in \s^2$. For any $y=z+\textbf{a}\in \partial D_\textbf{a}$
with $z\in \partial D$, we have
\ben
u^{sc}(y; D_\textbf{a})=-d \,e^{ik_p z\cdot d}\,e^{ik_p \textbf{a}\cdot d}
=u^{sc}(z; D)\,e^{ik_p \textbf{a}\cdot d}=u^{sc}(y-\textbf{a}; D)\,e^{ik_p \textbf{a}\cdot d}.
\enn
From the uniqueness of the exterior problem of the Navier equation for rigid scatterers (cf.~\cite{HH}), it follows that
\be\label{u1}
u^{sc}(y; D_\textbf{a})=u^{sc}(y-\textbf{a}; D)\,e^{ik_p \textbf{a}\cdot d}=u^{sc}(z; D)\,e^{ik_p \textbf{a}\cdot d},
\quad\mbox{for all}~ y\in \R^3\backslash\overline{D}.
\en 
This implies that for any $y=z+\textbf{a}\in \partial D_\textbf{a}$ with $z\in \partial D$,
\be\label{u2}
T_{\nu(y)} u^{sc}(y; D_\textbf{a})=T_{\nu(z)} u^{sc}(z; D)\,e^{ik_p \textbf{a}\cdot d}.
\en
Recall that, the P-part and S-part far-field patterns of $u^\infty(\hat{x}; D_\textbf{a})$ can be respectively characterized as follows
(cf.~\cite{AK}):
\ben\no
 u^{\infty}_p(\hat{x}; D_\textbf{a})
&=&  \int_{\partial D_\textbf{a}}\big\{[T_{\nu(y)}\;\{  \hat{x}\otimes \hat{x}\, e^{-ik_p\hat{x}\cdot y}\}]^\top u^{sc}(y; D_\textbf{a})\\
&&- \hat{x}\otimes \hat{x}\, e^{-ik_p\hat{x}\cdot y}\;T_{\nu(y)} u^{sc}(y; D_\textbf{a})\big\}\; d  s(y),
\enn
\ben
u^{\infty}_s(\hat{x}; D_\textbf{a})
&=&  \int_{\partial D_\textbf{a}}\big\{ [T_{\nu(y)}\{(\textbf{I}-  \hat{x}\otimes \hat{x}) e^{-ik_s\hat{x}\cdot y}\}]^\top u^{sc}(y; D_\textbf{a})\\
&&-(\textbf{I}- \hat{x}\otimes \hat{x}) e^{-ik_s\hat{x}\cdot y} T_{\nu(y)} u^{sc}(y; D_\textbf{a})\big\} d  s(y).
\enn
Changing the variable $y=z+\textbf{a}$ in the previously two expressions and making use of (\ref{u1}) and (\ref{u2}), we obtain
\ben\no
 u^{\infty}_p(\hat{x}; D_\textbf{a})
&=&  \int_{\partial D}\big\{[T_{\nu(z)}\;\{  \hat{x}\otimes \hat{x}\, e^{-ik_p\hat{x}\cdot z}\}]^\top u^{sc}(z; D)\\
&&- \hat{x}\otimes \hat{x} e^{-ik_p\hat{x}\cdot z}\;T_{\nu(y)} u^{sc}(z; D)\big\}\; d  s(z)\;e^{ik_p(d-\hat{x})\cdot\textbf{a}}\\
&=&u^{\infty}_p(\hat{x}; D)\;e^{ik_p(d-\hat{x})\cdot\textbf{a}},
\enn
and
\ben
u^{\infty}_s(\hat{x}; D_\textbf{a})
&=&  \int_{\partial D}\big\{[T_{\nu(z)}\{(\textbf{I}-  \hat{x}\otimes \hat{x}) e^{-ik_s\hat{x}\cdot z}\}]^\top u^{sc}(z; D)\\
&&-(\textbf{I}- \hat{x}\otimes \hat{x}) e^{-ik_s\hat{x}\cdot z} T_{\nu(z)} u^{sc}(z; D)\big\} d  s(z)\;e^{i(k_pd-k_s \hat{x})\cdot\textbf{a}}\\
&=&u^{\infty}_s(\hat{x}; D)\;e^{i(k_pd-k_s \hat{x})\cdot\textbf{a}},
\enn
from which the relations in (\ref{Far-field-relation-p}) follow.
The case for incident plane shear waves can be treated in the same manner.
\end{proof}
\begin{rem}
For general plane waves of the form (\ref{plane}),
one can obtain the corresponding relations between translated elastic bodies by supposition, giving rise to the updating formulae in Step 9 of
 Scheme R.
 Note that the identities in
(\ref{Far-field-relation-p}) (resp. (\ref{Far-field-relation-s})) are valid for incident plane shear (resp. pressure) wave only.
\end{rem}
As an application of the relations established in Lemma \ref{translation-relation}, we prove uniqueness in locating the position of translated
elastic bodies with a single plane pressure or shear wave.
\begin{lem}\label{Lem:uniqueness}  Let $d, d^\bot\in\mathbb{S}^2$ and $\omega\in \R_+$ be fixed.
 Assume $\alpha\,\beta=0$. Then the relation $u^\infty_\tau(\hat{x}, D_\textbf{a})=u^\infty_\tau(\hat{x}, D)$ with $\tau=p$ or $\tau=s$
for all $\hat{x}\in\s^2$ implies $|\textbf{a}|=0$.
\end{lem}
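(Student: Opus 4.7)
The plan is to reduce the statement to a statement about the vanishing of the translation vector $\mathbf{a}$ by means of the translation identities already established in Lemma \ref{translation-relation}, and then extract geometric information from the resulting exponential phase factor.

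First I would fix the case $\alpha=1$, $\beta=0$ (a plane pressure incidence), the case $\alpha=0$, $\beta=1$ being completely analogous by the second group of formulas in Lemma \ref{translation-relation}. Suppose $\tau=p$. Lemma \ref{translation-relation} gives
\[
u^\infty_p(\hat x; D_{\mathbf{a}})=u^\infty_p(\hat x; D)\,e^{ik_p(d-\hat x)\cdot \mathbf{a}},
\]
so the hypothesis $u^\infty_p(\hat x; D_{\mathbf{a}})=u^\infty_p(\hat x; D)$ becomes
\[
u^\infty_p(\hat x; D)\bigl(e^{ik_p(d-\hat x)\cdot \mathbf{a}}-1\bigr)=0,\qquad \hat x\in\s^2.
\]
Since $\hat x\mapsto u^\infty_p(\hat x;D)$ is a real-analytic function on $\s^2$ and, by the well-posedness of the direct problem together with Rellich's lemma (applied to the Helmholtz equation satisfied by $u^{sc}_p$), it is not identically zero, it is nonzero on a dense open subset of $\s^2$. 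Hence $e^{ik_p(d-\hat x)\cdot \mathbf{a}}=1$ on that dense set, and by continuity on all of $\s^2$. The case $\tau=s$ is handled identically with the phase replaced by $e^{i(k_p d-k_s\hat x)\cdot \mathbf{a}}$, again using that $u^\infty_s(\cdot;D)\not\equiv 0$.

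Next I would exploit connectedness of $\s^2$. The function $\hat x\mapsto k_p(d-\hat x)\cdot\mathbf{a}$ (resp.\ $\hat x\mapsto (k_p d-k_s\hat x)\cdot\mathbf{a}$) is continuous, takes values in $2\pi\Z$, and is therefore constant. Evaluating the first expression at $\hat x=d$ yields that the constant equals $0$; for the second, evaluating at two antipodal points $\hat x$ and $-\hat x$ shows that $k_s\hat x\cdot\mathbf{a}$ is constant on $\s^2$, which together with the constancy forces the value $0$ as well. In either situation we obtain $\hat x\cdot\mathbf{a}=c$ for all $\hat x\in\s^2$ with $c$ a fixed real number, and replacing $\hat x$ by $-\hat x$ yields $c=-c$, so $\hat x\cdot\mathbf{a}=0$ for all $\hat x\in\s^2$, hence $\mathbf{a}=0$.

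The main obstacle is the non-vanishing statement $u^\infty_\tau(\cdot;D)\not\equiv 0$, because a one-line argument is available only for the full pattern $u^\infty$. I would justify it by Rellich's lemma applied componentwise: if $u^\infty_p\equiv 0$ then $u^{sc}_p\equiv 0$ in $\R^3\setminus\overline D$, so $u^{sc}=u^{sc}_s$ is divergence-free and solves the vector Helmholtz equation with wavenumber $k_s$; the Dirichlet data $u^{sc}=-u^{in}$ on $\partial D$ then force $-u^{in}$ to admit a divergence-free exterior Helmholtz extension satisfying the radiation condition with wavenumber $k_s$, which contradicts $\divv u^{in}_p\not\equiv 0$ on $\partial D$ (or, for shear incidence, a parallel contradiction for $u^\infty_s$). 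Once this non-triviality is in hand, the remainder of the argument is the short analyticity/continuity reasoning described above.
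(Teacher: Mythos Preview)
Your approach follows the same route as the paper: invoke the translation identities of Lemma~\ref{translation-relation} and extract $\mathbf a=0$ from the resulting phase factor. The paper's argument is terser: from the hypothesis it simply asserts $(d-\hat x)\cdot\mathbf a=0$ for all $\hat x\in\s^2$, then observes that $\{d-\hat x:\hat x\in\s^2\}$ contains three linearly independent vectors, forcing $\mathbf a=0$ (and similarly with $k_pd-k_s\hat x$ for $\tau=s$). Your connectedness argument to pass from $e^{i\theta(\hat x)}=1$ to $\theta(\hat x)\equiv 0$, followed by evaluation at antipodal points, is a legitimate and more careful way to fill in what the paper leaves implicit, and is equivalent to the paper's spanning observation.

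Where your proposal has a genuine gap is the non-vanishing justification for $u^\infty_\tau(\cdot;D)$. You rightly flag this as the main obstacle---indeed, if $u^\infty_\tau\equiv 0$ the hypothesis is vacuously satisfied for every $\mathbf a$ and the conclusion fails---but the sketch you give does not produce a contradiction. Knowing that $-u^{in}|_{\partial D}$ is the boundary trace of a divergence-free radiating Helmholtz field in the exterior (namely $u^{sc}$ itself) imposes no constraint on $\divv u^{in}$ \emph{on} $\partial D$: the bulk divergence of an extension is unrelated to any ``divergence'' of its trace, so the asserted contradiction with $\divv u^{in}_p\not\equiv 0$ on $\partial D$ does not follow. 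The paper does not address this point at all and tacitly assumes $u^\infty_\tau\not\equiv 0$, so you are ahead of it in isolating the issue; but your resolution would need a different mechanism (for instance an energy/Betti identity or an explicit low-frequency argument) rather than the boundary-trace reasoning sketched.
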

\begin{proof} Without loss of generality we
assume $\beta=0$. This implies that the incident wave is a plane pressure wave.
If $u^\infty_p(\hat{x}; D_\textbf{a})=u^\infty_p(\hat{x}; D)$ for all $\hat{x}\in\s^2$,
then it follows from the first identity in (\ref{Far-field-relation-p}) that $(d-\hat{x})\cdot \textbf{a}=0$ for all
$\hat{x}\in\s^2$. Since the set $\{d-\hat{x}:\hat{x}\in\s^2 \}$ contains three linearly independent vectors of $\R^3$,
it follows that  $|\textbf{a}|=0$. By arguing similarly we deduce from $u^\infty_s(\hat{x}; D_\textbf{a})=u^\infty_s(\hat{x}; D)$
and the second identity in (\ref{Far-field-relation-p}) that $(k_pd-k_s\hat{x})\cdot \textbf{a}=0$ for all $\hat{x}\in\s^2$,
which also leads to $|\textbf{a}|=0$. The proof for the case with $\alpha=0$ can be shown in the same way by using (\ref{Far-field-relation-s}).
\end{proof}

Let $\mathcal{R}$ be a rotation matrix in $\R^3$. The following relation between
$u^{\infty}(\hat{x}; D)$ and $u^{\infty}(\hat{x}; \mathcal{R} D)$ was mentioned in \cite[Section 5]{MD}.
\begin{lem}\label{Lem:rotation}
\be\label{ball}
\mathcal{R}\, u^{\infty}(\hat{x};D, d, d^\bot)= u^{\infty}(\mathcal{R}\hat{x}; \mathcal{R}D, \mathcal{R}d,\mathcal{R}d^\bot),
\quad\mbox{for all}\quad \hat{x},d,d^\bot\in \mathbb{S}^2, d\cdot d^\bot=0.
\en
\end{lem}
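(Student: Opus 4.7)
The strategy is to exploit the rotational invariance of the Navier system in $\mathbb{R}^3$ together with the uniqueness of the exterior Dirichlet problem. The plan is to define a rotated total field and identify it with the total field of the rotated scattering configuration, and then read off the far-field identity from the asymptotic expansion \eqref{far}.

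First I would verify the rotation invariance of the Lam\'e operator $\Delta^*$. Given any $C^2$ vector field $u$ on a domain $\Omega$, define
\[
v(x)\ :=\ \mathcal{R}\,u(\mathcal{R}^{-1}x),\qquad x\in\mathcal{R}\,\Omega.
\]
Since $\mathcal{R}^{-1}=\mathcal{R}^\top$, a direct chain-rule computation gives $\divv v(x)=(\divv u)(\mathcal{R}^{-1}x)$ and hence $\grad\divv v(x)=\mathcal{R}\grad\divv u(\mathcal{R}^{-1}x)$, while the componentwise rotational invariance of the scalar Laplacian yields $\Delta v(x)=\mathcal{R}\,\Delta u(\mathcal{R}^{-1}x)$. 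Combining these, $(\Delta^*+\omega^2)v(x)=\mathcal{R}\,(\Delta^*+\omega^2)u(\mathcal{R}^{-1}x)$, so Navier's equation is preserved.

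Next I would apply this to the scattering problem. Let $u(x;D,d,d^\perp)$ denote the total field in $\mathbb{R}^3\setminus\overline{D}$ with incident wave $u^{in}(\cdot;d,d^\perp)$ from \eqref{plane}, and set $v(x):=\mathcal{R}\,u(\mathcal{R}^{-1}x;D,d,d^\perp)$. The plan is to show $v$ is the unique total field associated with the scatterer $\mathcal{R}D$ and incident directions $\mathcal{R}d,\mathcal{R}d^\perp$. Three things need checking: (a) Navier's equation holds for $v$ in $\mathbb{R}^3\setminus\overline{\mathcal{R}D}$, which is immediate from the step above; (b) $v$ vanishes on $\partial(\mathcal{R}D)$, which is immediate from $u|_{\partial D}=0$; and (c) writing $v=v^{in}+v^{sc}$ with $v^{in}(x):=\mathcal{R}u^{in}(\mathcal{R}^{-1}x;d,d^\perp)$, a short calculation using $(\mathcal{R}^{-1}x)\cdot d=x\cdot\mathcal{R}d$ gives
\[
v^{in}(x)=\alpha(\mathcal{R}d)e^{ik_p x\cdot\mathcal{R}d}+\beta(\mathcal{R}d^\perp)e^{ik_s x\cdot\mathcal{R}d}=u^{in}(x;\mathcal{R}d,\mathcal{R}d^\perp),
\]
so $v^{in}$ is the correct incident plane wave; moreover $|\mathcal{R}^{-1}x|=|x|$ together with the Helmholtz decomposition of $u^{sc}$ shows $v^{sc}$ satisfies Kupradze's radiation condition \eqref{Rads}. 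By the uniqueness of the exterior Dirichlet problem for the Lam\'e system, $v(x)=u(x;\mathcal{R}D,\mathcal{R}d,\mathcal{R}d^\perp)$, and the same identity holds for the scattered parts.

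Finally, to extract the far-field identity I would plug the asymptotic expansion \eqref{far} for $u^{sc}(\cdot;D,d,d^\perp)$ into $v^{sc}(x)=\mathcal{R}u^{sc}(\mathcal{R}^{-1}x;D,d,d^\perp)$. Using $|\mathcal{R}^{-1}x|=|x|$ and $\widehat{\mathcal{R}^{-1}x}=\mathcal{R}^{-1}\hat x$, the expansion becomes
\[
v^{sc}(x)=\frac{e^{ik_p|x|}}{4\pi(\lambda+\mu)|x|}\mathcal{R}u_p^\infty(\mathcal{R}^{-1}\hat x;D,d,d^\perp)+\frac{e^{ik_s|x|}}{4\pi\mu|x|}\mathcal{R}u_s^\infty(\mathcal{R}^{-1}\hat x;D,d,d^\perp)+\mathcal{O}(|x|^{-2}).
\]
Matching this with the standard expansion \eqref{far} for $u^{sc}(\cdot;\mathcal{R}D,\mathcal{R}d,\mathcal{R}d^\perp)$ and summing the P- and S-parts gives $u^\infty(\hat x;\mathcal{R}D,\mathcal{R}d,\mathcal{R}d^\perp)=\mathcal{R}u^\infty(\mathcal{R}^{-1}\hat x;D,d,d^\perp)$; replacing $\hat x$ by $\mathcal{R}\hat x$ yields \eqref{ball}. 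The only point that requires care is the chain-rule verification in the first step (in particular that $\grad\divv$ transforms as $\grad\divv v(x)=\mathcal{R}\grad\divv u(\mathcal{R}^{-1}x)$, which depends crucially on the orthogonality $\mathcal{R}^\top\mathcal{R}=\textbf{I}$); once that is in hand, the rest is a straightforward uniqueness-and-asymptotics argument.
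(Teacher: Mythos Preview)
Your argument is correct: rotational covariance of the Lam\'e operator, uniqueness of the exterior Dirichlet problem, and then matching the asymptotics in \eqref{far} is exactly the right route, and your computations (in particular the identification $v^{in}=u^{in}(\cdot;\mathcal{R}d,\mathcal{R}d^\perp)$ and the far-field matching after the substitution $\hat{x}\mapsto\mathcal{R}\hat{x}$) are sound. One small remark: to conclude that $v^{sc}$ satisfies Kupradze's radiation condition it is cleanest to observe that the Helmholtz decomposition itself is rotation-covariant, namely $(v^{sc})_p(x)=\mathcal{R}\,(u^{sc})_p(\mathcal{R}^{-1}x)$ and $(v^{sc})_s(x)=\mathcal{R}\,(u^{sc})_s(\mathcal{R}^{-1}x)$, which follows from the same $\grad\divv$ and $\curl\curl$ chain-rule identities you already noted; then \eqref{Rads} for $v^{sc}$ is immediate from \eqref{Rads} for $u^{sc}$ together with $|\mathcal{R}^{-1}x|=|x|$.

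As for comparison with the paper: the paper does not actually supply a proof of this lemma. It simply states the identity and refers to \cite[Section~5]{MD}. Your write-up therefore goes well beyond what the paper provides, giving a self-contained argument. The approach you take is the natural one (and presumably the one underlying the cited reference): there is no essentially different strategy here, since the statement is really just the assertion that the map $D\mapsto u^\infty(\cdot;D,d,d^\perp)$ is equivariant under the $SO(3)$-action, and that equivariance can only come from the rotational invariance of $\Delta^*$, the boundary condition, and the radiation condition.
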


 Now, we
recall the scaling operator
 $\Lambda_\rho D=\{\rho\,x: x\in D\}$ for $\rho>0$. Given the incident wave $u^{in}$ of the form (\ref{plane}), we write $u^\infty(\hat{x})=u^\infty(\hat{x}; D,  \omega)$ and $u^{sc}(\hat{x})=u^{sc}(\hat{x}; D,  \omega)$ to
indicate the dependance on the obstacle $D$ and the frequency of incidence $\omega$.
 There holds
\begin{lem}\label{Lem:scale}
\ben
u_\tau^\infty(\hat{x}; \Lambda_\rho D,  \omega )=\rho\,u_\tau^\infty (\hat{x}; D, \rho\, \omega ),\quad \hat{x}\in \s^2,\quad\tau=p,s\,\mbox{or}\;\emptyset.
\enn
\end{lem}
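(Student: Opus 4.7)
The plan is to derive the identity by a straightforward rescaling of the scattering problem together with uniqueness of the direct problem. Fix $\rho>0$ and write $u(x)=u^{in}(x;\omega)+u^{sc}(x;\Lambda_\rho D,\omega)$ for the total field associated to $\Lambda_\rho D$ at frequency $\omega$. I will introduce $v(y):=u(\rho y)$ for $y\in\R^3\setminus\overline{D}$ and show that $v$ solves the scattering problem for $D$ at frequency $\rho\omega$.

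First I would check the three ingredients of this reduction. The operator $\Delta^*$ is homogeneous of order $2$ in spatial derivatives, so if $u$ satisfies $(\Delta^*+\omega^2)u=0$ in $\R^3\setminus\overline{\Lambda_\rho D}$, then $v(y)=u(\rho y)$ satisfies $\rho^{-2}\Delta^*_y v(y)+\omega^2 v(y)=0$, i.e.\ $(\Delta^*+(\rho\omega)^2)v=0$ in $\R^3\setminus\overline{D}$. Next, because $k_p(\omega)\rho=k_p(\rho\omega)$ and $k_s(\omega)\rho=k_s(\rho\omega)$, the plane-wave incident field transforms simply: $u^{in}(\rho y;\omega)=u^{in}(y;\rho\omega)$. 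The Dirichlet condition $u=0$ on $\partial\Lambda_\rho D$ clearly becomes $v=0$ on $\partial D$. Finally, $u^{sc}(\rho y;\Lambda_\rho D,\omega)$ still verifies Kupradze's radiation condition, now with respect to the wavenumbers $\rho k_p(\omega)=k_p(\rho\omega)$ and $\rho k_s(\omega)=k_s(\rho\omega)$. Hence $v(y)$ is a total field for the scattering problem with obstacle $D$ and frequency $\rho\omega$, and by the uniqueness of the direct problem (Kupradze) one concludes
\begin{equation*}
u^{sc}(\rho y;\Lambda_\rho D,\omega)=u^{sc}(y;D,\rho\omega),\qquad y\in\R^3\setminus\overline{D}.
\end{equation*}

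To extract the far-field relation, I would set $x=\rho y$ and let $|y|\to\infty$, expanding both sides by \eqref{far}. On the left, the wavenumbers are $k_{p,s}(\omega)$ and the distance is $|x|=\rho|y|$, giving prefactors $1/(\rho|y|)$. On the right, the wavenumbers are $k_{p,s}(\rho\omega)=\rho k_{p,s}(\omega)$ and the distance is $|y|$, so the oscillating exponentials match those on the left. Equating the P- and S-coefficients separately (using that the pressure part is radial and the shear part is tangential on $\mathbb{S}^2$) yields
\begin{equation*}
\tfrac{1}{\rho}\,u_\tau^\infty(\hat y;\Lambda_\rho D,\omega)=u_\tau^\infty(\hat y;D,\rho\omega),\qquad \tau=p,s,
\end{equation*}
and then also for $\tau=\emptyset$ by summation, which is precisely the claim.

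The only subtlety is the bookkeeping of the factors of $\rho$ that appear both in the scaling of distances ($|x|=\rho|y|$) and in the relationship between the wavenumbers at frequencies $\omega$ and $\rho\omega$; once one observes that these are exactly compensated by the single $\rho$ appearing in the denominators $1/(\rho|y|)$, the identification of far-field patterns is immediate. No deeper difficulty is anticipated, since everything is controlled by uniqueness of the exterior Dirichlet problem for the Navier system, which has already been invoked in Lemma \ref{translation-relation}.
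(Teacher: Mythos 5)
Your proposal is correct and follows essentially the same route as the paper: rescale the field via $y\mapsto \rho y$, observe that the homogeneity of $\Delta^*$ and the relation $k_\tau(\rho\omega)=\rho\,k_\tau(\omega)$ turn the problem for $\Lambda_\rho D$ at frequency $\omega$ into the problem for $D$ at frequency $\rho\omega$, and then read off the factor $\rho$ by comparing the two far-field expansions \eqref{far}. The only cosmetic difference is that you rescale the total field and invoke uniqueness explicitly, while the paper rescales the scattered field and leaves the uniqueness step implicit.
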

\begin{proof} From (\ref{far}), we see
\be\no
u^{sc}(x; \Lambda_\rho D,  \omega )&=&\frac{\exp(ik_p|x|)}{4\pi (\lambda+\mu)|x|}\, u^{\infty}_p(\hat{x}; \Lambda_\rho D,  \omega )+
\frac{\exp(ik_s|x|)}{4\pi\mu|x|}\, u^{\infty}_s(\hat{x}; \Lambda_\rho D,  \omega )\\ \label{far-sG}
&&+ \mathcal{O}(\frac{1}{|x|^2})
\en
as $|x|\rightarrow+\infty$. Define $\widetilde{u}^{sc}(y):=u^{sc}(\rho y; \Lambda_\rho D,  \omega  )$ for $y\in \R^3\backslash\overline{D}$. It is readily seen that
\ben\left\{\begin{array}{lll}
(\Delta^*+\rho^2\, \omega^2)\,\widetilde{u}^{sc}=0&& \mbox{in}\quad \R^3\backslash \overline{D},\\
\widetilde{u}^{sc}(y)=-u^{in}(y)&&\mbox{on}\quad \partial D.
\end{array}\right.
\enn
Moreover, $\widetilde{u}^{sc}(y)$ is still a radiating solution with the asymptotic behavior
\ben
\widetilde{u}^{sc}(y; D, \rho\omega )=\frac{\exp(i\rho k_p|y|)}{4\pi (\lambda+\mu)|y|}\, u^{\infty}_p(\hat{y}; D, \rho \omega )+
\frac{\exp(i\rho d_s|y|)}{4\pi \mu |y|}\, u^{\infty}_s(\hat{y}; D, \rho\omega )+ \mathcal{O}(\frac{1}{|y|^2}),
\enn as $|y|\rightarrow\infty$.
Changing the variable $y=x/\rho$, we deduce from the above expression that
\be\no
u^{sc}(x)=\widetilde{u}^{sc}(x/\rho)&=&\frac{\rho\exp(ik_p|y|)}{4\pi (\lambda+\mu) |x|}\, u^{\infty}_p(\hat{x}; D, \rho \omega )+
\frac{\rho\exp(ik_s|x|)}{4\pi \mu |x|}\, u^{\infty}_s(\hat{x}; D, \rho\omega )\\ \label{far-G}
&&+ \mathcal{O}(\frac{1}{|x|^2})
\en as $|x|\rightarrow\infty$.
Comparing (\ref{far-sG}) with (\ref{far-G}) yields
\ben
u^{\infty}_\tau(\hat{x}; \Lambda_\rho D,  \omega )=\rho\,u^{\infty}_\tau(\hat{x}; D, \rho \omega ),\quad\tau=p,s,
\enn
and thus
\ben
u^{\infty}(\hat{x}; \Lambda_\rho D,  \omega )=\rho\,u^{\infty}(\hat{x}; D, \rho \omega ).
\enn

The proof is completed.
\end{proof}

We are in a position to present the proof of Theorem \ref{Th2}.

\begin{proof}[Proof of Theorem \ref{Th2}] Without loss of generality, we
assume $\alpha=1, \beta=0$. Let the scatterer component $\Om_j=\Om_j(z_j, \mathcal{R}_j, r_j, E_j)$ fulfill (\ref{Omega}) and (\ref{eq:b2}).
Combining Lemmas \ref{translation-relation}, \ref{Lem:rotation} and \ref{Lem:scale}, we obtain
\be\label{eq12}
u_p^\infty(\hat{x};  d, \omega, \Omega_j)&=&u_p^\infty(\hat{x}; d, \omega,  \mathcal{R}_j\, \Lambda_{r_j}\, E_j)\,e^{ik_p(d-\hat{x})\cdot z_j}\\ \no
&=&\mathcal{R}_j\, u_p^\infty(\mathcal{R}_j ^{-1}\hat{x};  \mathcal{R}_j ^{-1}d,\, \omega,\,  \Lambda_{r_j}\, E_j)\,e^{ik_p(d-\hat{x})\cdot z_j}\\ \label{eq9}
&=&r_j\,\mathcal{R}_j\, u_p^\infty(\mathcal{R}_j ^{-1}\hat{x};  \mathcal{R}_j ^{-1}d,\, r_j\,\omega,\,  E_j)\,e^{ik_p(d-\hat{x})\cdot z_j}.
\en
Using (\ref{eq:b2}) and again Lemmas \ref{Lem:rotation} and \ref{Lem:scale}, we have for $j\in J_0$
\be\no
&&r_j\,\mathcal{R}_j\, u_p^\infty(\mathcal{R}_j ^{-1}\hat{x};  \mathcal{R}_j ^{-1}d,\, r_j\,\omega,\,  E_j)\\ \no
&=& r_{j_{\tau}}\,\mathcal{R}_{j_{\sigma}}\, u_p^\infty(\mathcal{R}_{j_{\sigma}}^{-1}\hat{x}; \mathcal{R}_{j_{\sigma}}^{-1} d, r_{j_{\tau}}\,\omega,\, \, \Sigma_{j_0}  )+\mathcal{O}(h)\\ \no
&=& u_p^\infty(\hat{x};  d, \omega, \mathcal{R}_{j_{\sigma}} \Lambda_{r_{j_{\tau}}} \Sigma_{j_0},)+\mathcal{O}(h)\\ \label{eq8}
&=& u_p^\infty(\hat{x};  d, \omega,  \widetilde{\Sigma}_1)+\mathcal{O}(h),
\en
where $\widetilde{\Sigma}_1$ is given as in (\ref{eq:b1}).
Inserting (\ref{eq8}) into (\ref{eq9}), it follows from Lemma \ref{lem1} that
\ben\no
u_p^\infty(\hat{x}; \Omega)&=&\sum_{j=1}^{l_e} u_p^\infty(\hat{x}; \Omega_j) +\mathcal{O}(L_e^{-1})\\
&=&\sum_{j\in J_0} u_p^\infty(\hat{x}; \widetilde{\Sigma}_1)e^{ik_p(d-\hat{x})\cdot z_j}+\sum_{j\in \{1,\cdots l_e\}\backslash J_0} u_p^\infty(\hat{x}; \Omega_j) +\mathcal{O}(L^{-1}_e+h).
\enn
Hence, for $z\in {\rm neigh}(z_{j})$ with some $j\in J_0$ we have
\be\no
&&|\langle u_p^\infty(\hat{x}; \Omega),  u_p^\infty(\hat{x}; \widetilde{\Sigma}_1) e^{-ik_p\hat{x}\cdot z} \rangle|\\ \label{eq10}
&=&|\langle u_p^\infty(\hat{x}; \widetilde{\Sigma}_1)e^{ik_p(d-\hat{x})\cdot z_{j}},  u_p^\infty(\hat{x}; \widetilde{\Sigma}_1) e^{-ik_p\hat{x}\cdot z} \rangle|+\mathcal{O}(L_e^{-1}+h)\\ \label{eq11}
&\leq & \|u_p^\infty(\hat{x}; \widetilde{\Sigma}_1)\|_{L^2}+\mathcal{O}(L^{-1}_e+h).
\en
The equality in (\ref{eq10}) follows from the the Riemann-Lebesgue lemma about
oscillatory integrals by noting $|z_{j'}-z|\sim L_e\gg 1$ for $j'\neq j, 1\leq j'\leq l_e$ and $z\in {\rm neigh}(z_{j})$.
For the inequality in (\ref{eq11}), we have applied the Cauchy-Schwartz inequality,
and it is easily seen that the equality holds only at $z = z_{j}$. Therefore, from the definition of the indicator function $W_1^1$,
\ben
W_1^1(z)\leq 1+\mathcal{O}(L^{-1}_e+h),\quad\mbox{for}\quad z\in {\rm neigh}(z_{j}).
\enn
On the other hand, by a similar argument, together with
assumption (i) on $\widetilde{\Sigma}_j$ and the equality (\ref{eq12}) , we can directly verify that
\ben
W_1^1(z)<1+\mathcal{O}(L^{-1}_e+h),\quad z\in {\rm neigh}(z_{j}),\quad j\in\{1,2,\cdots,l_e\}\backslash J_0.
\enn
This proves Theorem \ref{Th2} with $m=1$ for an incident pressure wave. In a completely similar manner, our argument can be extended to show the indicating behavior of $W_2^1(z)$ ($m=2$) by using the first equality in (\ref{Far-field-relation-s}). Regarding $W_3^1(z)$ ($m=3$) where the full far-field pattern data are involved, we apply the orthogonality of $u_p^\infty$ and $u_s^\infty$ to obtain
\ben
W_3^j=\frac{\left|\langle  u_p^\infty(\hat{x}; \Om),\; e^{-ik_p\hat{x}\cdot z}\,u_p^\infty(\hat{x}; \widetilde{\Sigma}_j)\rangle+
\langle u_s^\infty(\hat{x}; \Om), e^{-ik_s\hat{x}\cdot z}\,u_s^\infty(\hat{x}; \widetilde{\Sigma}_j) \rangle \right|^2}{\|u_p^\infty(\hat{x}; \widetilde{\Sigma}_j)\|^2_{L^2}+ \|u_s^\infty(\hat{x}; \widetilde{\Sigma}_j)\|^2_{L^2} }\,.
\enn
Thus, the behavior of $W_3^1(z)$ follows from those of $W_1^1(z)$ and $W_2^1(z)$.

In the case of an incident shear wave, the indicating behavior of $W_m^1(z)$ ($m=1,2,3$)
can be shown similarly . The proof of Theorem \ref{Th2} is complete.
\end{proof}

\begin{rem}\label{Re:Ex} For a general incident plane wave of the form (\ref{plane}), following a similar argument to the proof of Theorem~\ref{Th2}, one can show that Theorem~\ref{Th2} still holds with the indicator functions replaced, respectively, by
\ben
W_1^j(z)=\frac{\left|\langle  u_p^\infty(\hat{x}; \Om),\;A_1^j(\hat{x};z)\rangle \right|^2}
{\|u_p^\infty(\hat{x}; \widetilde{\Sigma}_j)\|^2_{L^2}}, \quad
W_2^j(z)=\frac{\left|\langle  u_s^\infty(\hat{x}; \Om),\;A_2^j(\hat{x};z)\rangle \right|^2}
{\|u_s^\infty(\hat{x};  \widetilde{\Sigma}_j)\|^2_{L^2}},\\
W_3^j(z)=\frac{\left|\langle  u^\infty(\hat{x}; \Om),\;A_1^j(\hat{x};z)+A_2^j(\hat{x};z)\rangle \right|^2}
{\|u^\infty(\hat{x};  \widetilde{\Sigma}_j)\|^2_{L^2}},
\enn
where for $j=1,2,\cdots,l''$,
\ben
A_1^j(\hat{x};z)&:=&e^{ik_p(d-\hat{x})\cdot z}\,u_p^\infty(\hat{x}; d, d^\bot, \alpha, 0, \widetilde{\Sigma}_j)+ e^{i(k_sd-k_p\hat{x})\cdot z}\,u_p^\infty(\hat{x}; d, d^\bot, 0, \beta, \widetilde{\Sigma}_j),\\
A_2^j(\hat{x};z)&:=& e^{i(k_pd-k_s\hat{x})\cdot z}\,u_s^\infty(\hat{x}; d, d^\bot, \alpha, 0, \widetilde{\Sigma}_j)+ e^{ik_s(d-\hat{x})\cdot z}\,u_s^\infty(\hat{x}; d, d^\bot, 0, \beta, \widetilde{\Sigma}_j).
\enn
\end{rem}

\section{Locating multiple multiscale scatterers}\label{section-multi-scale}

In this section, we consider the recovery of a scatterer consisting of multiple multiscale components given by
\begin{equation}\label{eq:multiscale}
G=D\cup \Omega,
\end{equation}
where $D$ is as described in \eqref{scaling}--\eqref{assumption-small} denoting the union of the small components, and $\Omega$ is as described in \eqref{Omega}--\eqref{eq:admss} denoting the union of the extended components. As before, we assume that the shapes of the extended components are from a known admissible class, as described in \eqref{E0}--\eqref{eq:admss}. In addition, we require that
\begin{equation}\label{eq:sparsem}
L_m:={\rm dist}(D, \Omega)\gg 1.
\end{equation}
Next, we shall develop Scheme M to locate the $l_s+l_e$ multiscale scatterer components of $G$ in \eqref{eq:multiscale} by using a single far-field pattern. Our treatment shall follow the one in \cite{LLZ} of locating multiscale acoustic scatterers. More specifically, we shall concatenate Schemes S and R of locating small and extended scatterers, respectively, by a {\it local tuning} technique, to form Scheme M of locating the multiscale scatterers.

\begin{defn}\label{def:1}
Let $\mathscr{A}_h$ be the augmented admissible class in \eqref{eq:a3} with the two sets $\{\mathcal{R}_j\}_{j\in\mathscr{N}_1}$ and $\{r_j\}_{j\in\mathscr{N}_2}$ of rotations and scalings respectively, and $\mathcal{T}_h$ be the sampling mesh in Scheme R. Suppose that $\widehat{\Omega}_j=\widehat{z}_j+\widehat{\mathcal{R}}_j\Lambda_{\widehat{r}_j} \Sigma_j$, $j=1,2,\ldots,l_e$, are the reconstructed images of $\Omega_j=z_j+\mathcal{R}_j\Lambda_{r_j}\Sigma_j$, $j=1,2,\ldots,l_e$. For a properly chosen $\delta\in\mathbb{R}_+$, let $\mathscr{O}_1^j, \mathscr{O}_2^j$ and $\mathscr{O}_3^j$ be, respectively, $\delta$-neighborhoods of $\widehat{z}_j$, $\widehat{\mathcal{R}}_j$ and $\widehat{r}_j$, $j=1,2,\ldots,l_e$. Then let $\{\mathcal{T}_{h_l'}, \{\mathcal{R}_j\}_{j\in\mathscr{P}_l}, \{r_j\}_{j\in\mathscr{Q}_l}\}$  be a refined mesh of $\{\mathcal{T}_h\cap\mathscr{O}_1^l, \{\mathcal{R}_j\}_{j\in\mathscr{N}_1}\cap\mathscr{O}_2^l, \{r_j\}_{j\in\mathscr{N}_2}\cap\mathscr{O}_3^l\}$, $l=1,2,\ldots,l_e$.

Define
\begin{equation}\label{eq:localtp}
\widehat{\widehat{\Omega}}_l(\widehat{\widehat{z}},\widehat{\widehat{\mathcal{R}}},\widehat{\widehat{r}}):=\widehat{\widehat{z}}+\widehat{ \widehat{ \mathcal{R} } } \Lambda_{\widehat{\widehat{r}}}\Sigma_l\quad \mbox{for}\ \ \widehat{\widehat{z}}\in \mathcal{T}_{h_l'},\ \ \widehat{\widehat{\mathcal{R}}}\in\{\mathcal{R}_j\}_{j\in\mathscr{P}_l}, \ \ \widehat{\widehat{r}}\in\{r_j\}_{j\in\mathscr{Q}_l},
\end{equation}
a {\it local tune-up} of $\widehat{\Omega}_l=\widehat{z}_l+\widehat{\mathcal{R}}_l\Lambda_{\widehat{r}_l} \Sigma_l$ relative to $\{\mathcal{T}_{h_l'}, \{\mathcal{R}_j\}_{j\in\mathscr{P}_l}, \{r_j\}_{j\in\mathscr{Q}_l}\}$, $1\leq l\leq l_e$.

Define
\begin{equation}\label{eq:ltu}
\widehat{\widehat{\Omega}}:=\bigcup_{l=1}^{l_e} \widehat{\widehat{\Omega}}_l,
\end{equation}
with each $\widehat{\widehat{\Omega}}_l$, $1\leq l\leq l_e$, a local tune-up in \eqref{eq:localtp} relative to $\{\mathcal{T}_{h_l'}, \{\mathcal{R}_j\}_{j\in\mathscr{P}_l}, \{r_j\}_{j\in\mathscr{Q}_l}\}$, a {\it local tune-up} of $\widehat{\Omega}:=\bigcup_{j=1}^{l_e}\widehat{\Omega}_j$, relative to the {\it local tuning mesh}
\begin{equation}\label{eq:mesh}
\mathscr{L}:=\bigcup_{l=1}^{l_e} \{\mathcal{T}_{h_l'}, \{\mathcal{R}_j\}_{j\in\mathscr{P}_l}, \{r_j\}_{j\in\mathscr{Q}_l}\}.
\end{equation}
\end{defn}

According to Definition~\ref{def:1}, $\widehat{\Omega}$ is the reconstructed image of the extended scatterer $\Omega$, whereas $\widehat{\widehat{\Omega}}$ is an adjustment of $\widehat{\Omega}$ by locally adjusting the position, orientation and size of each component of $\widehat{\Omega}$.

With the above preparation, we are ready to present Scheme M to locate the multiple components of $G$ in \eqref{eq:multiscale}, which can be first sketched as follows. First, by Lemmas~\ref{lem1} and \ref{lem2}, we know
\begin{equation}\label{eq:m1}
u_\tau^\infty(\hat x; G)\approx u_\tau(\hat x; \Omega), \quad \tau=s,p\ \mbox{or}\ \emptyset,
\end{equation}
where $u_\tau^\infty(\hat x; G)$ and $u_\tau^\infty(\hat x; \Omega)$ are, respectively, the far-field patterns of $G$ and $\Omega$ corresponding to a single incident plane wave of the form (\ref{plane}). Hence, one can use $u_\tau^\infty(\hat x; G)$ as the far-field data for Scheme R to locate the extended scatterer components of $\Omega$ (approximately). We suppose the reconstruction in the above step yields $\widehat{\Omega}$, which is an approximation to $\Omega$. Then, according to Lemma~\ref{lem1} again, we have
\begin{equation}\label{eq:m2}
u_\tau(\hat x; D)\approx u_\tau^\infty(\hat x;G)-u_\tau^\infty(\hat x; \Omega)\approx u_\tau^\infty(\hat x; G)-u_\tau^\infty(\hat x; \widehat{\Omega}). 
\end{equation}
With the above calculated far-field data, one can then use Scheme S to locate the small scatterer components of $D$. However, the error introduced in \eqref{eq:m2} might be even more significant than the scattering data of $D$, hence the second-stage reconstruction cannot be expected to yield some reasonable result. In order to tackle this problem, a {\it local tuning technique} can be implemented by replacing $\widehat{\Omega}$ in \eqref{eq:m2} by a local tune-up $\widehat{\widehat{\Omega}}$. Clearly, a more accurate recovery of the extended scatterer $\Omega$ is included in the local tune-ups relative to a properly chosen local tuning mesh. Hence, one can repeat the second-stage reconstruction as described above by running through all the local tune-ups, and then locate the ``clustered'' local maximum points which denote the positions of the small scatterers. Meanwhile, one can also achieve much more accurate reconstruction of the extended scatterers. In summary, Scheme M can be proceeded as follows.

 \vspace*{3mm}

\begin{tabularx}{\textwidth }{>{\bfseries}lX}
\toprule
Scheme M&  Locating multiple multi-scale scatterers of $G$ in \eqref{eq:multiscale}.
\\ \midrule
Step 1 & For an unknown scatterer $G$,
  collect the P-part ($u_p^\infty (\hat x; G)$), S-part ($u_s^\infty(\hat x; G)$) or the full far-field ($u^\infty(\hat x; G)$) patterns, by sending a single detecting plane wave of the form (\ref{plane}).  \\ \midrule
Step 2 & Select a sampling region with a mesh $\mathcal{T}_h$ containing $\Omega$. \\ \midrule
Step 3 & Apply Scheme M with $u_\tau^\infty(\hat x; G)$, $\tau=s, p$ or $\emptyset$, as the far-field data, to reconstruct approximately the extended scatterer $\Omega$, denoted by $\widehat{\Omega}$. Clearly, $\widehat{\Omega}$ is as described in Definition~\ref{def:1}.  \\ \midrule
Step 4 & For $\widehat{\Omega}$ obtained in {\bf Step 3}, select a local-tuning mesh $\mathscr{L}$ of the form \eqref{eq:mesh}.   \\ \midrule
Step 5 & For a tune-up $\widehat{\widehat{\Omega}}$ relative to the local tuning mesh $\mathscr{L}$ in {\bf Step 4}, calculate
\begin{equation}\label{eq:formu}
\widetilde{u}_\tau^\infty(\hat x):=u_\tau^\infty(\hat x; G)-u_\tau^\infty(\hat x; \widehat{\widehat{\Omega}}).
\end{equation}
Apply Scheme S with $\widetilde{u}_\tau^\infty(\hat x)$ as the far-field data to locate the significant local maximum points on $\mathcal{T}_h\backslash\mathscr{L}$.
 \\ \midrule
Step 6 & Repeat {\bf Step 5} by running through all the local tune-ups relative to $\mathscr{L}$. Locate the clustered local maximum points on $\mathcal{T}_h\backslash\mathscr{L}$, which correspond to the small scatterer components of $D$. \\ \midrule
Step 7 & Update $\widehat{\Omega}$ to the local tune-up $\widehat{\widehat{\Omega}}$ which generates the clustered local maximum points in {\bf Step 6}. \\
\bottomrule
\end{tabularx}
%
%

\section{Numerical examples}\label{Numerics}

In this section, three numerical tests are presented to verify the applicability of the proposed  new schemes (Scheme S, R and M) in inverse elastic scattering problems for rigid bodies in three dimensions. Either plane pressure wave or shear wave can be used as the detecting field incident on the rigid scatterer and it generates coexisting scattering P- and S-waves coupled by the rigid body boundary condition. However, for brevity, we only present the numerical results where the plane shear wave is employed for the locating schemes.

In the sequel, the exact far field data are synthesized by a forward solver using quadratic finite elements for each displacement field component on a truncated  spherical (3D) domain centered at the origin and enclosed by a PML layer following \cite{BPT}. The computation is carried out on a sequence of successively refined meshes till the relative error of two successive finite element solutions between the two adjacent meshes is below $0.1\%$.  The synthetic  far field data are computed via the integral representation formulae \cite[Eqs.~(2.12) and (2.13) ]{AK} and taken as the exact one.

In all the experiments, we always take the Lam{\' e} constants $\lambda=2$ and $\mu=1$, the incident direction  $d=(0,\,0,\,1)$, the perpendicular direction $d^\perp=(1,\,0,\,0)$ and the angular frequency $\omega=2$. In such a way, we know that the two wavenumbers $k_{p}=1$ and $k_{s}=2$ and the incident S-wavelength is $\pi$. For scatterers of  small size or regular size, we always add to the exact  far field data a uniform noise of $5\%$  and use it as the measurement data in our numerical tests. While for multiscale scatterers, a uniform noise of $3\%$ is added to the exact far field data.

Five revolving bodies will be considered for the scatterer components in our numerical tests. They are characterized by revolving  the following 2D parametric  curves along the $x$-axis. Some geometries are adjusted to their upright positions if necessary.
\begin{eqnarray*}
\mathbf{Ball:} & & \{ (x,y) : x=\cos(s), \  y=\sin(s), \  0\le s\le 2\pi \},\\
\mathbf{Peanut:} & & \{ (x,y) : x=\sqrt{3 \cos^2 (s) + 1}\cos(s), \  y=\sqrt{3 \cos^2 (s) + 1}\sin(s), \  0\le s\le 2\pi \},\\
\mathbf{Kite:} & & \{ (x,y) : x=\cos(s)+0.65\cos (2s)-0.65, \  y=1.5\sin(s), \  0\le s\le 2\pi \},\\
\mathbf{Acorn:} & & \{ (x,y) : x=(1+\cos(\pi s)\cos(2\pi s)/3)\cos(\pi s), \\
          & &  \phantom{ \{ (x,y) : }\  y=(1+\cos(\pi s)\cos(2\pi s)/3)\sin(\pi s), \  0\le s\le 2\pi \},\\
\mathbf{UFO:} & & \{ (x,y) : x=(1+0.2\cos(4\pi s))\cos(\pi s), \\
          & &  \phantom{ \{ (x,y) : }\ y=(1+0.2\cos(4\pi s))\sin(\pi s), \  0\le s\le 2\pi \}.
\end{eqnarray*}
They will be denoted for short by \textbf{B}, \textbf{P}, \textbf{K}, \textbf{A} and \textbf{U}, respectively, and shown in Figs.~\ref{fig:Ex1_true}(b), \ref{fig:Ex1_true}(c), \ref{fig:Ex1_true}(d) and Figs.~\ref{fig:Ex3_true}(b), \ref{fig:Ex3_true}(c).

\medskip

\textbf{Example 1}. (Scatterer of three small components) The scatterer consists of three
components \textbf{B}, \textbf{P} and \textbf{K}, all of which are  scaled by one tenth so that their sizes are  much smaller than the incident wave length.  As shown in Fig.~\ref{fig:Ex1_true}(a), one small ball is located at $(-2,\,3,\,-2)$, a small
peanut at $(3,\,-2,\,-2)$ and a small kite at $(3,\,3,\,3)$.   With resort to Scheme S, the reconstruction results of the small components are shown in Fig. \ref{fig:Ex1_result}  based on the indicator functions $I_1(z)$, $I_2(z)$ and $I_3(z)$  using the P-wave, S-wave and full-wave far field data, respectively.  It is clearly seen from Fig. \ref{fig:Ex1_result} that all the indicator functions $I_m$ ($m=1,2$) in Scheme S can identify the scatterer with the correct positions of its three  components. As emphasized in Remark~\ref{rem:22} that the resolution of the S-wave reconstruction in Fig.~\ref{fig:Ex1_result}(b) is much sharper than its P-wave counterpart  in Fig.~\ref{fig:Ex1_result}(a) due to the shorter wavelength of S-wave. However, the full-wave imaging result in Fig.~\ref{fig:Ex1_result}(c) exhibits the most accurate and stable reconstruction compared with the other two in that $I_3(z)$, by combining  the highlighted ball and kite positions (lower two components in Fig.~\ref{fig:Ex1_result}(a)) from $I_1$ and the highlighted peanut position (upper component in Fig.~\ref{fig:Ex1_result}(b)) from $I_1$, yields the best indicating behavior and meanwhile retains the resolution as in the S-wave scenario. Thus to avoid redundancy and for better resolution, we always take the full-wave indicator function for later examples.

\begin{figure}
\hfill{}\includegraphics[width=0.24\textwidth]{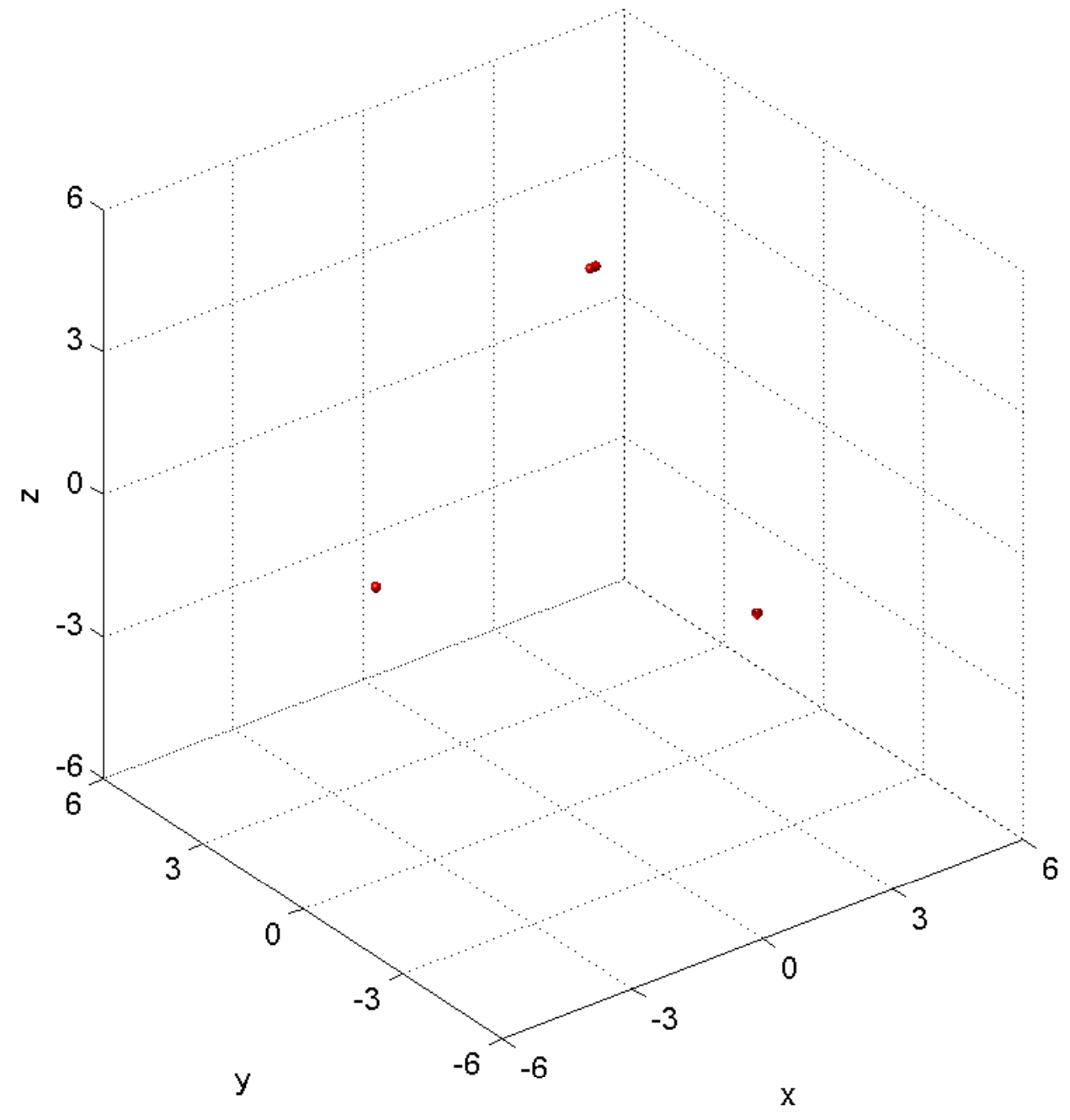}\hfill{}\includegraphics[width=0.24\textwidth]{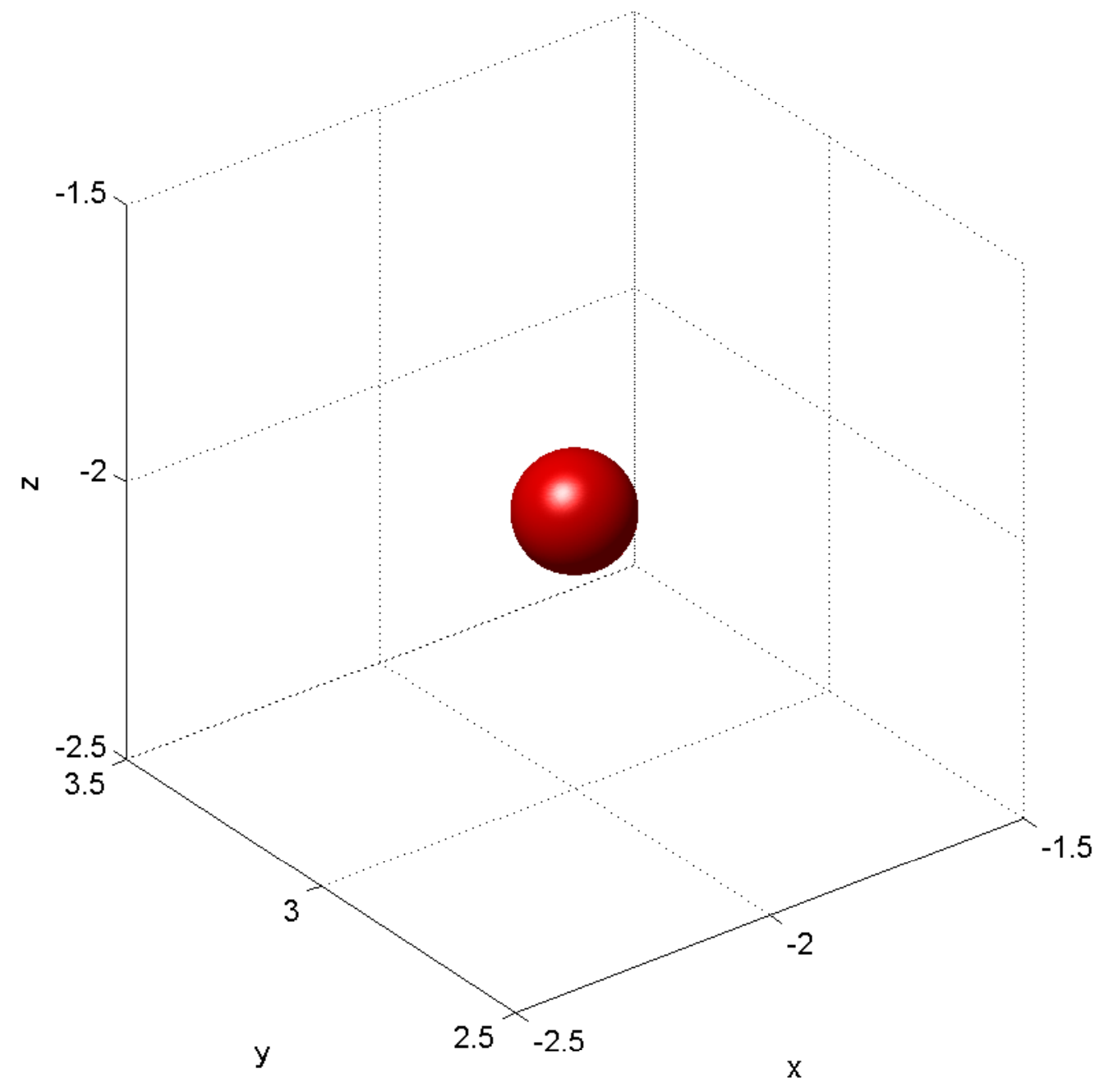}\hfill{}\includegraphics[width=0.24\textwidth]{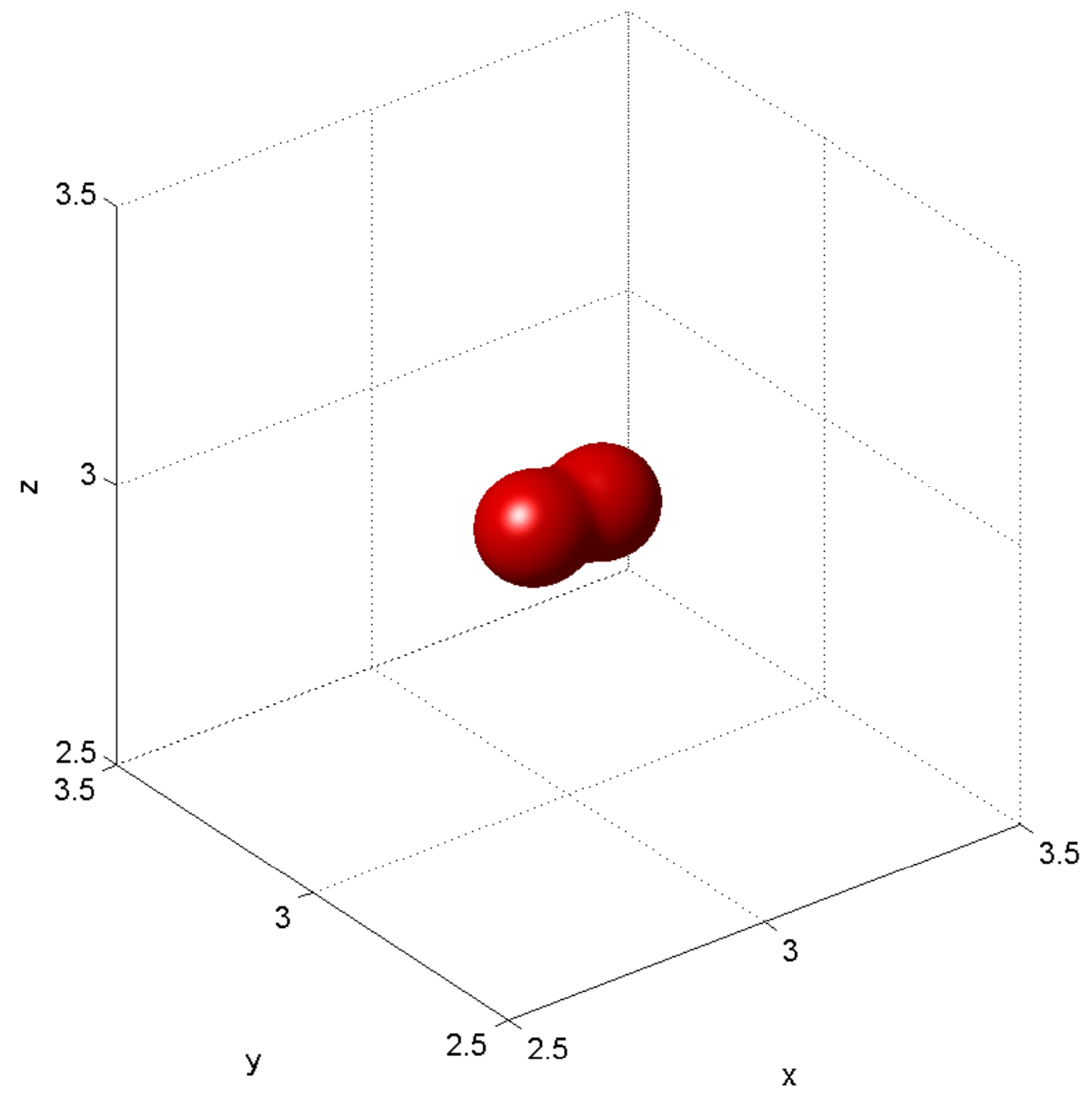}\hfill{}\includegraphics[width=0.24\textwidth]{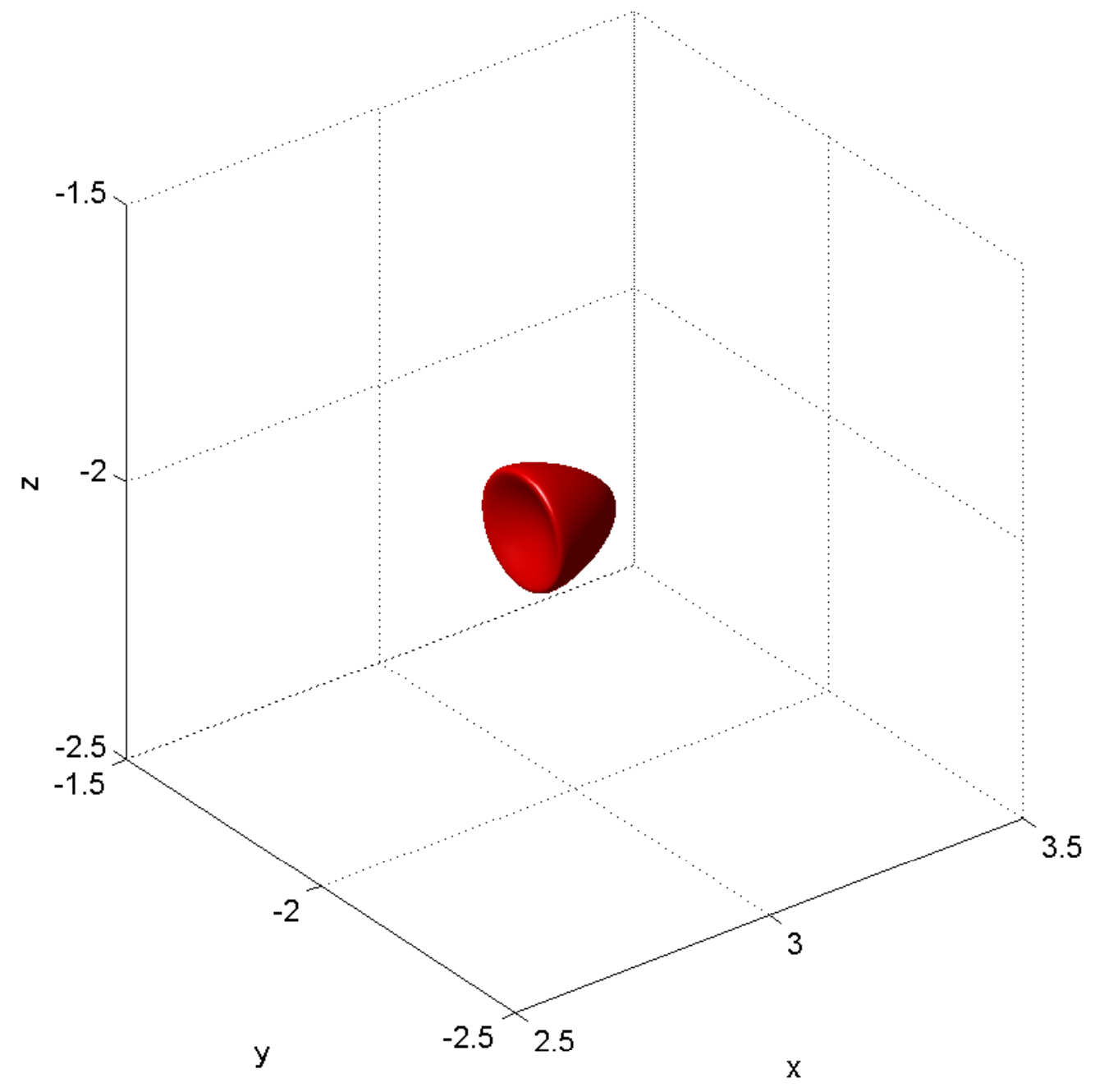}\hfill{}

\hfill{}(a)\hfill{}\hfill{}(b)\hfill{}\hfill{}(c)\hfill{}\hfill{}(d)\hfill{}

\caption{\label{fig:Ex1_true} True scatter and its  components before scaling in Example 1.}
\end{figure}

\begin{figure}
\hfill{}\includegraphics[width=0.32\textwidth]{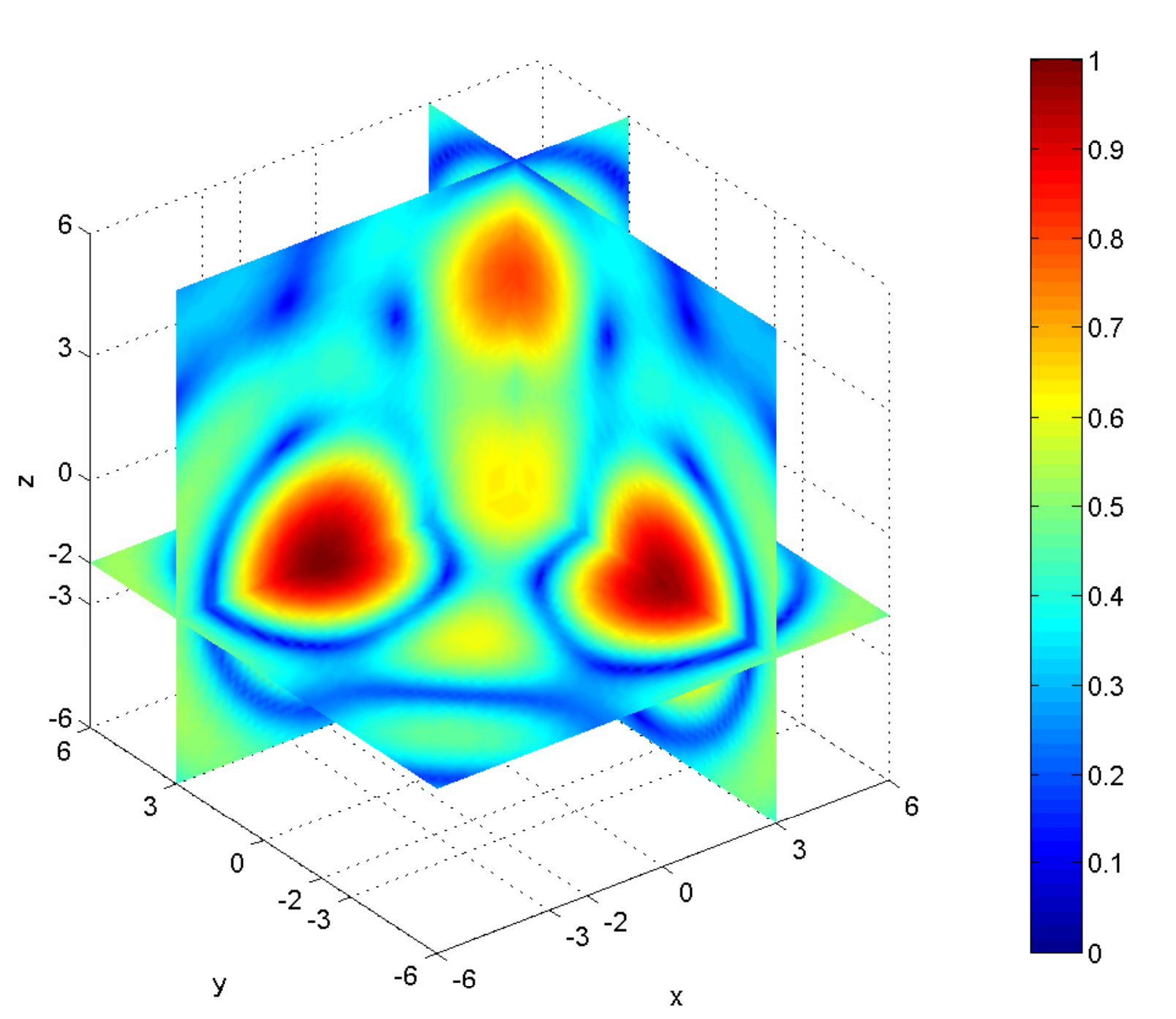}\hfill{}\includegraphics[width=0.32\textwidth]{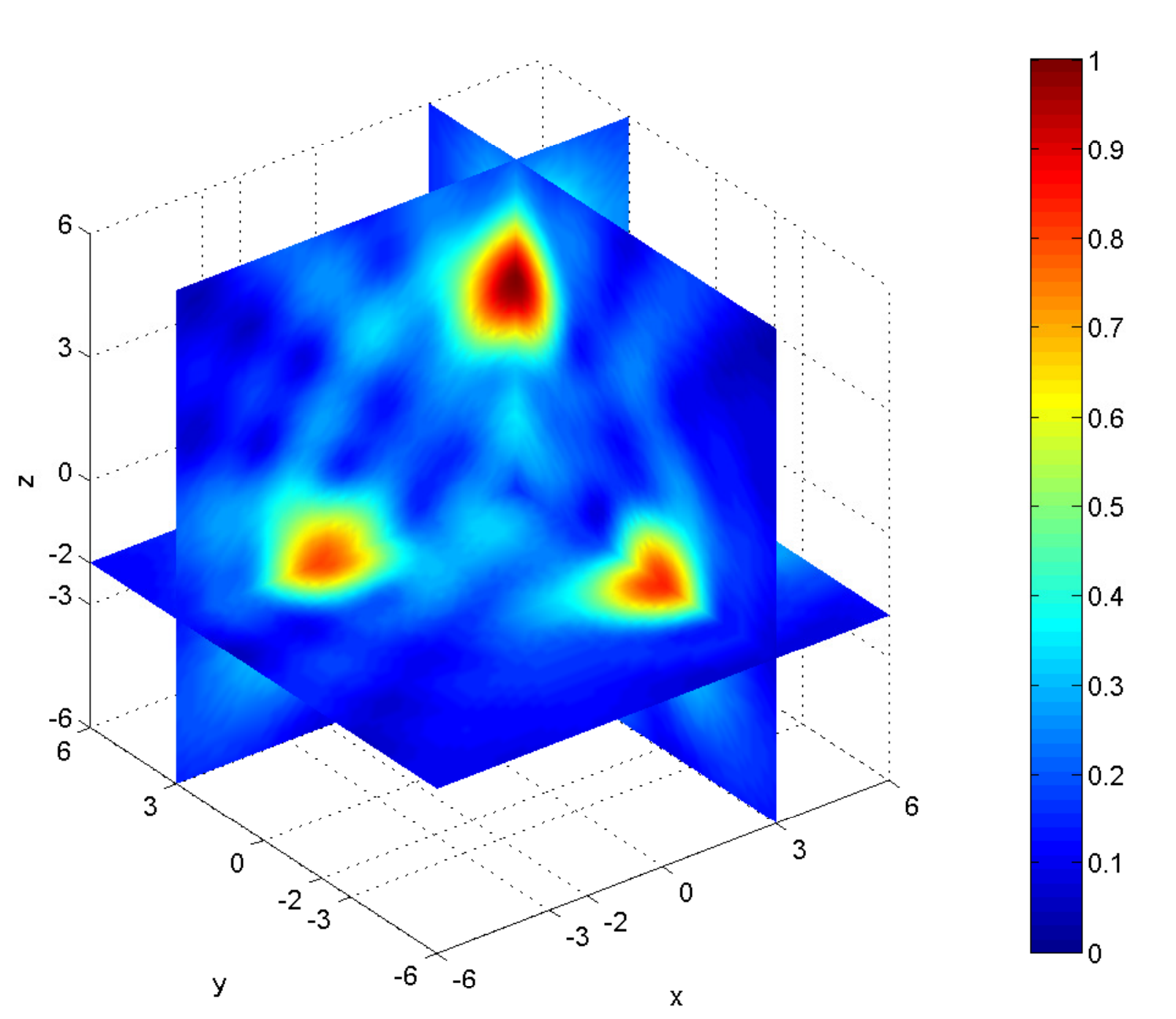}\hfill{}\includegraphics[width=0.32\textwidth]{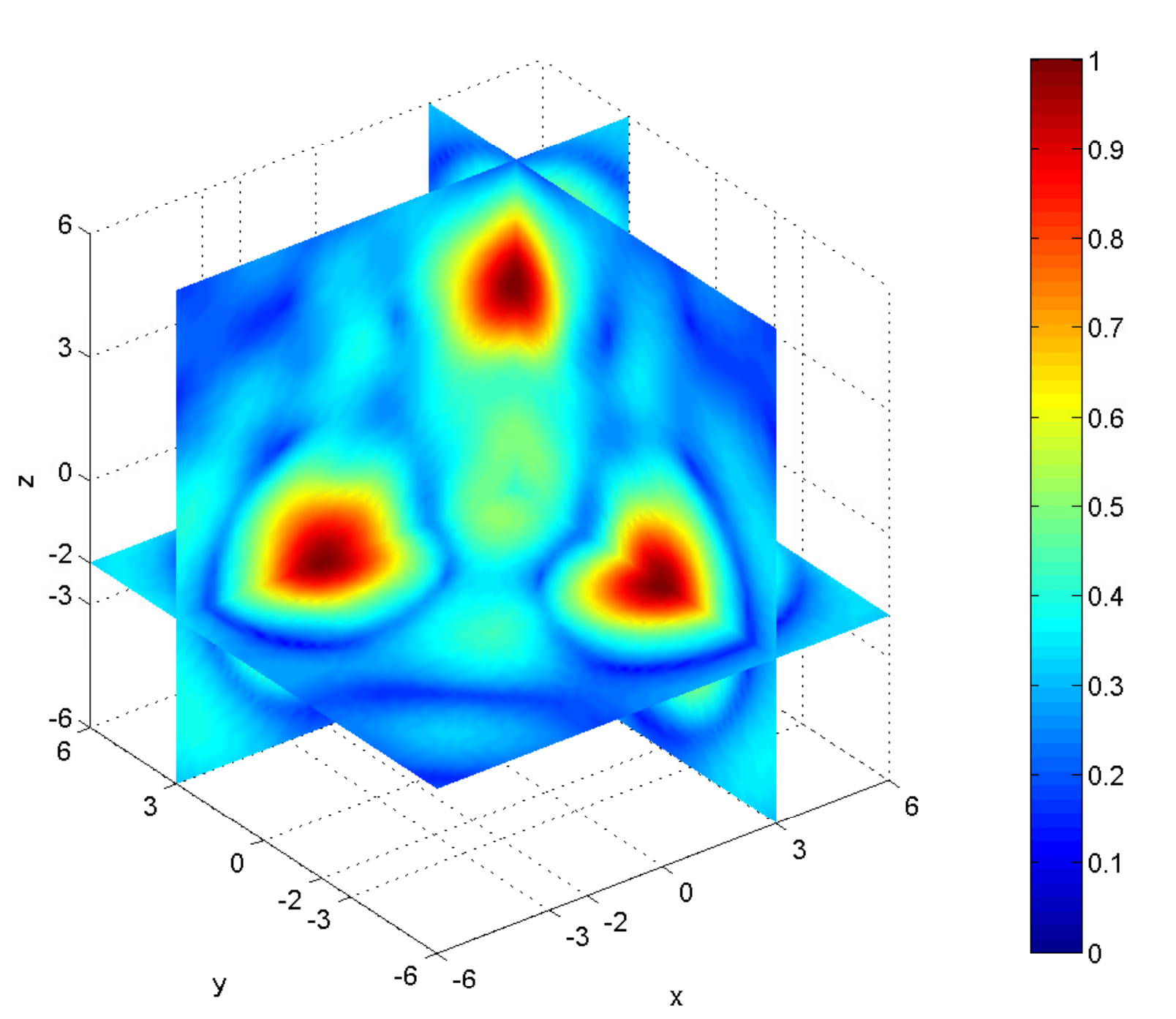}\hfill{}

\hfill{}P-wave\hfill{}\hfill{}S-wave\hfill{}\hfill{}Full-wave\hfill{}

\caption{\label{fig:Ex1_result} From left to right :  Reconstruction results based on the indicator functions $I_1(z)$, $I_2(z)$ and $I_3(z)$ using $u_p^\infty$, $u_s^\infty$ and $u^\infty$, respectively, in Example 1. }
\end{figure}

\medskip

\textbf{Example 2}. (Multiple extended scatterers) The scatterer is composed of a
UFO and an acorn. Their sizes are around 3 and is comparable with the incident plane shear wave.
The  UFO is located at $(-2,\,0,\,-2)$, and
the acorn is located at $(2,\,0,\,2)$ as shown in  Fig. \ref{fig:Ex3_true}(a).

\begin{figure}
\hfill{}\includegraphics[width=0.32\textwidth]{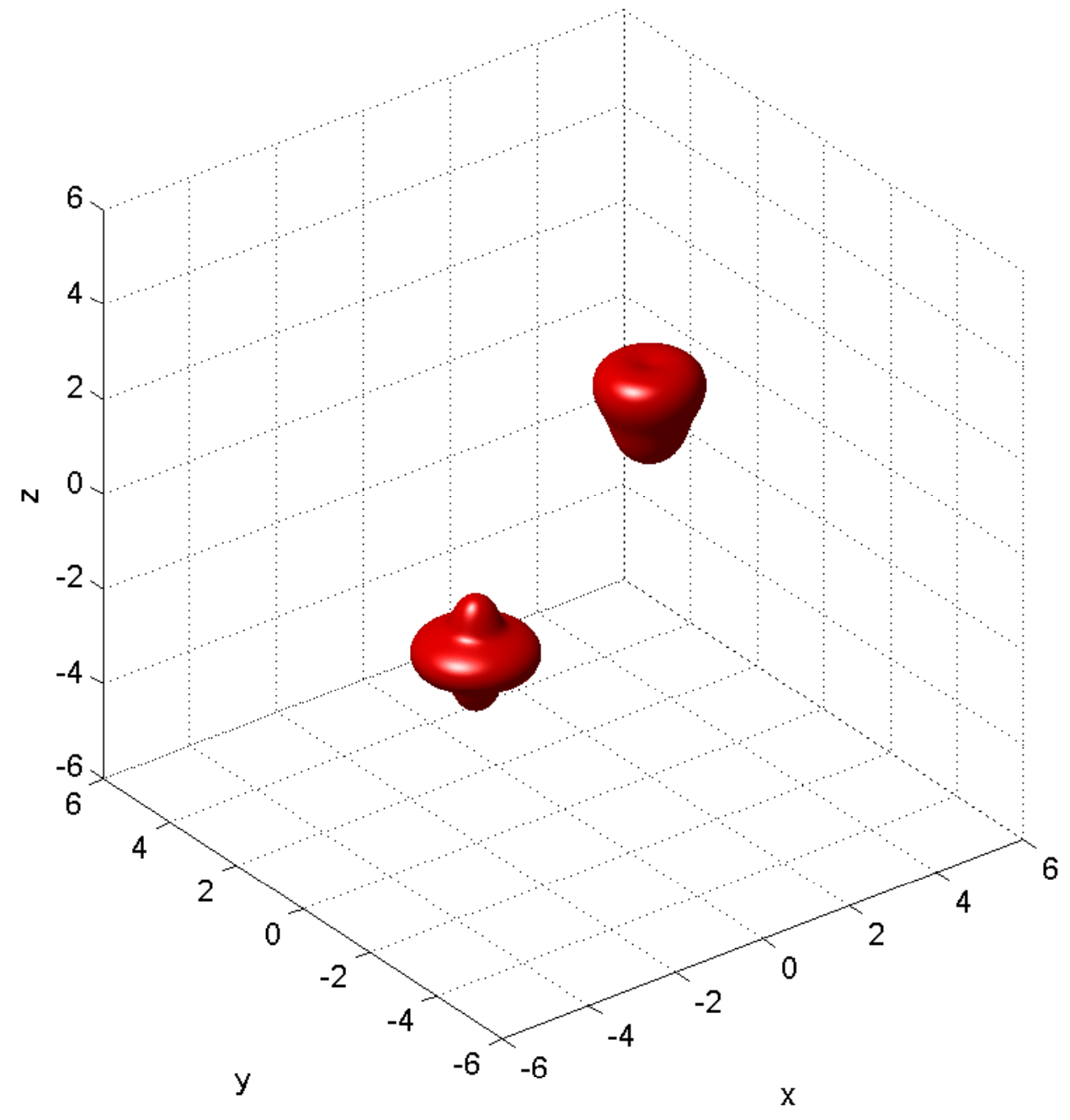}\hfill{}\includegraphics[width=0.32\textwidth]{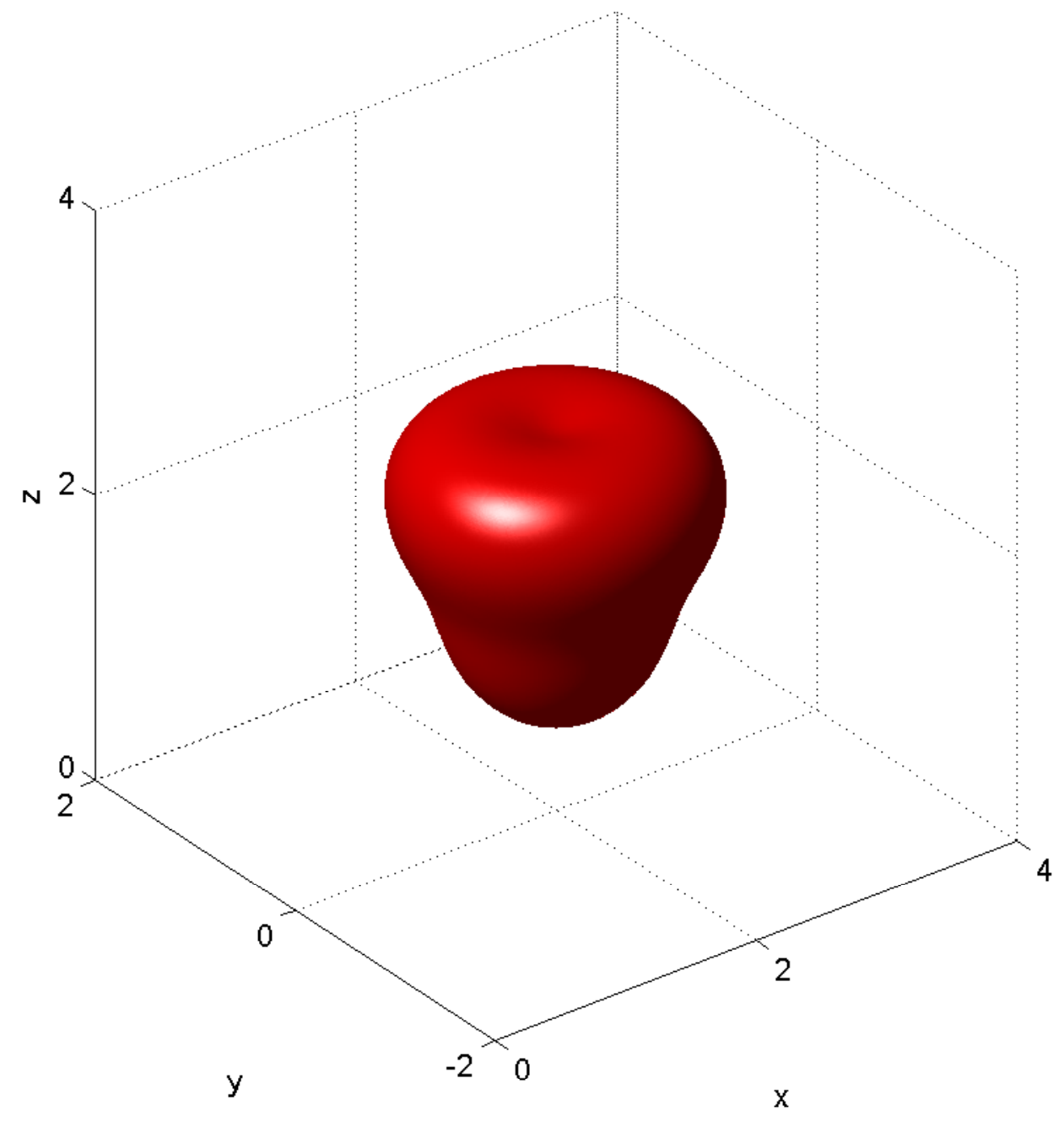}\hfill{}\includegraphics[width=0.32\textwidth]{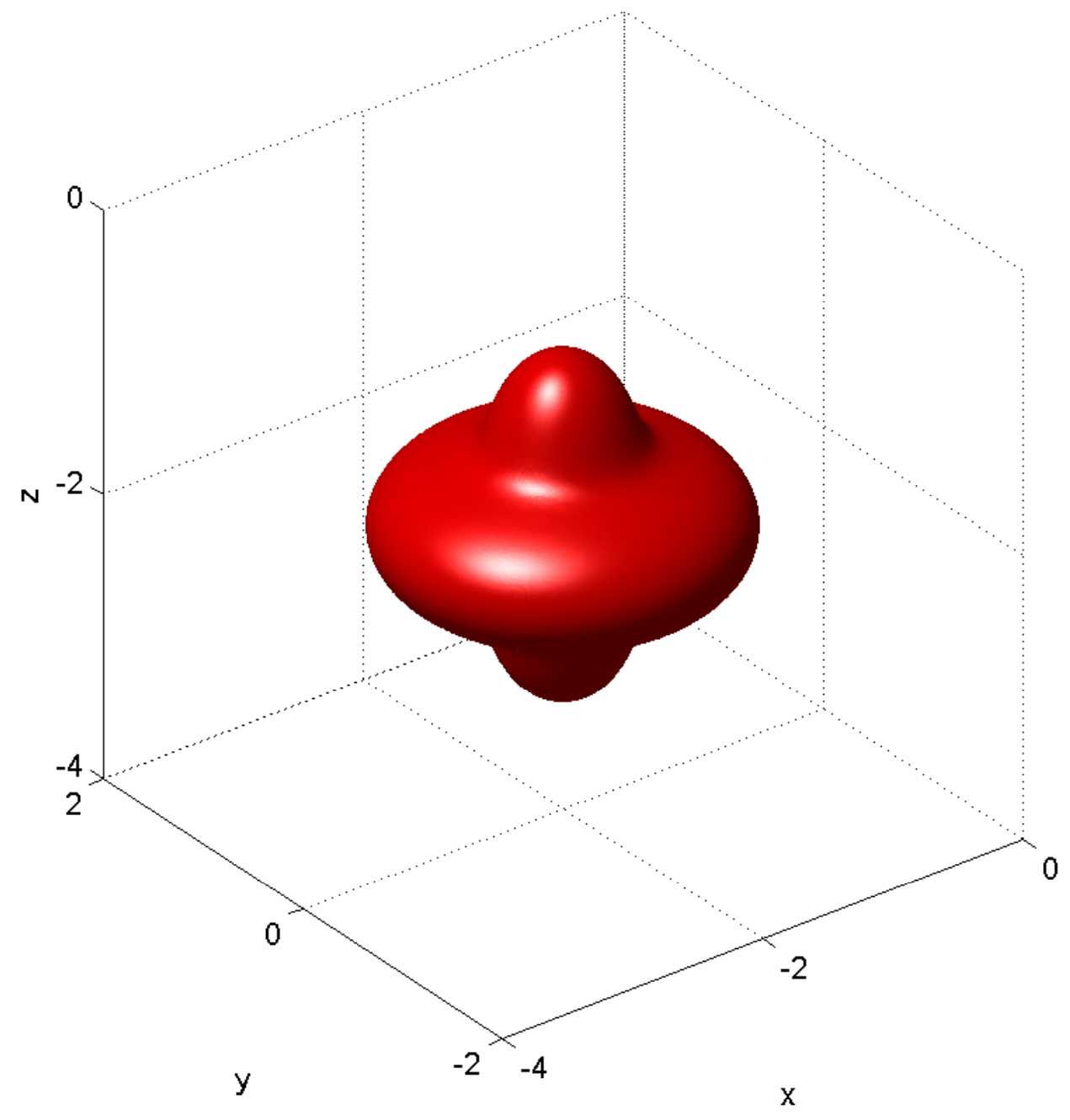}\hfill{}

\hfill{}(a)\hfill{}\hfill{}(b)\hfill{}\hfill{}(c)\hfill{}

\caption{\label{fig:Ex3_true}True scatter and its  components in Example 2.}
\end{figure}

The candidate  data set ${\mathscr{A}}_h$  includes  far-field data of both reference components  \textbf{U} and \textbf{A}, and is further lexicographically augmented by  a collection of a priori known orientations and sizes. More precisely, the augmented data set is obtained by rotating \textbf{U} and \textbf{A} in the  $x$-$z$ plane every 90 degrees, see, e.g., the four orientations of \textbf{A}  in Fig.~\ref{fig:Ex3_basis}, and by scaling \textbf{U} and \textbf{A} by  one half, one and twice.

\begin{figure}
\hfill{}\includegraphics[width=0.24\textwidth]{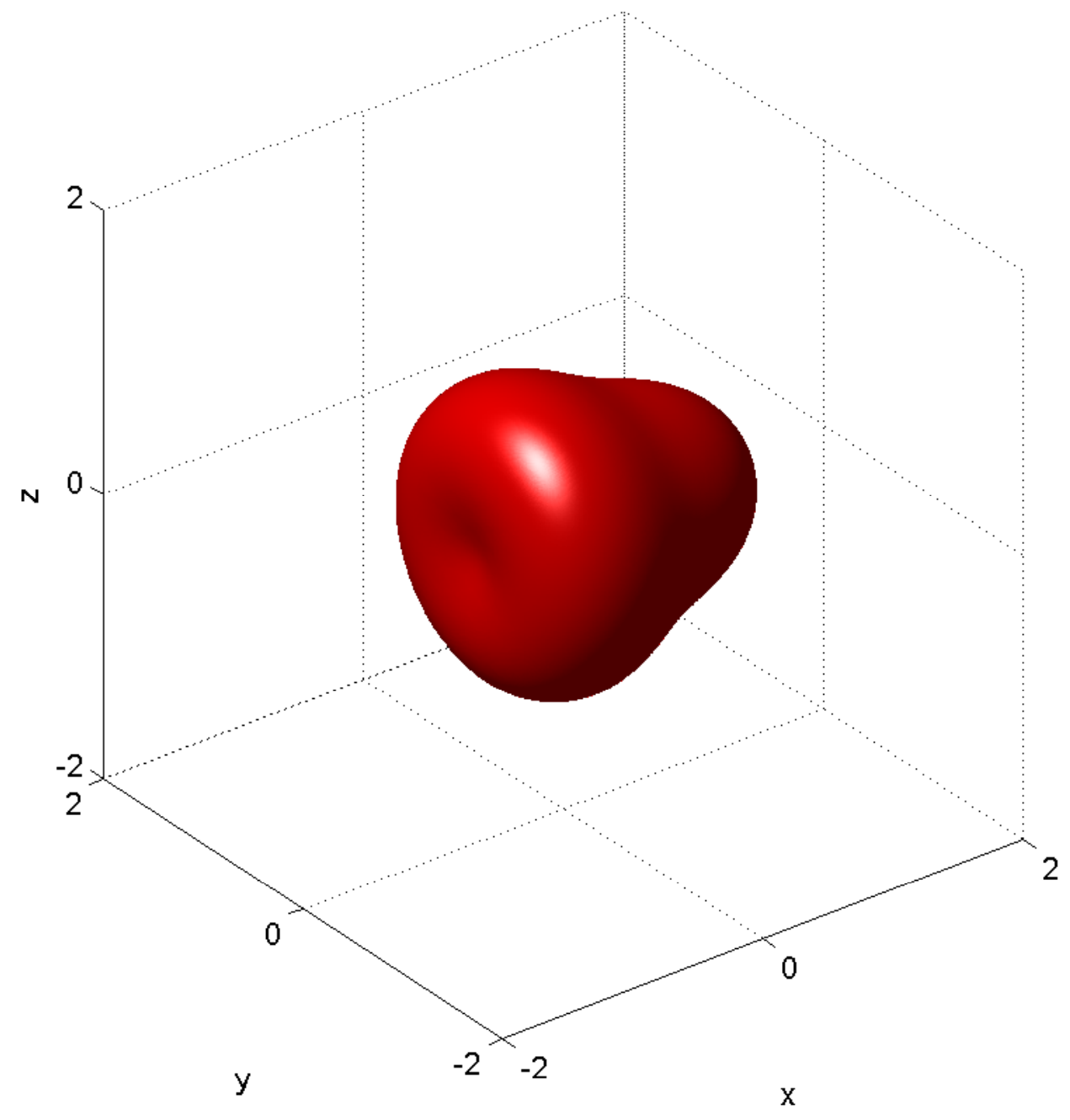}\hfill{}\includegraphics[width=0.24\textwidth]{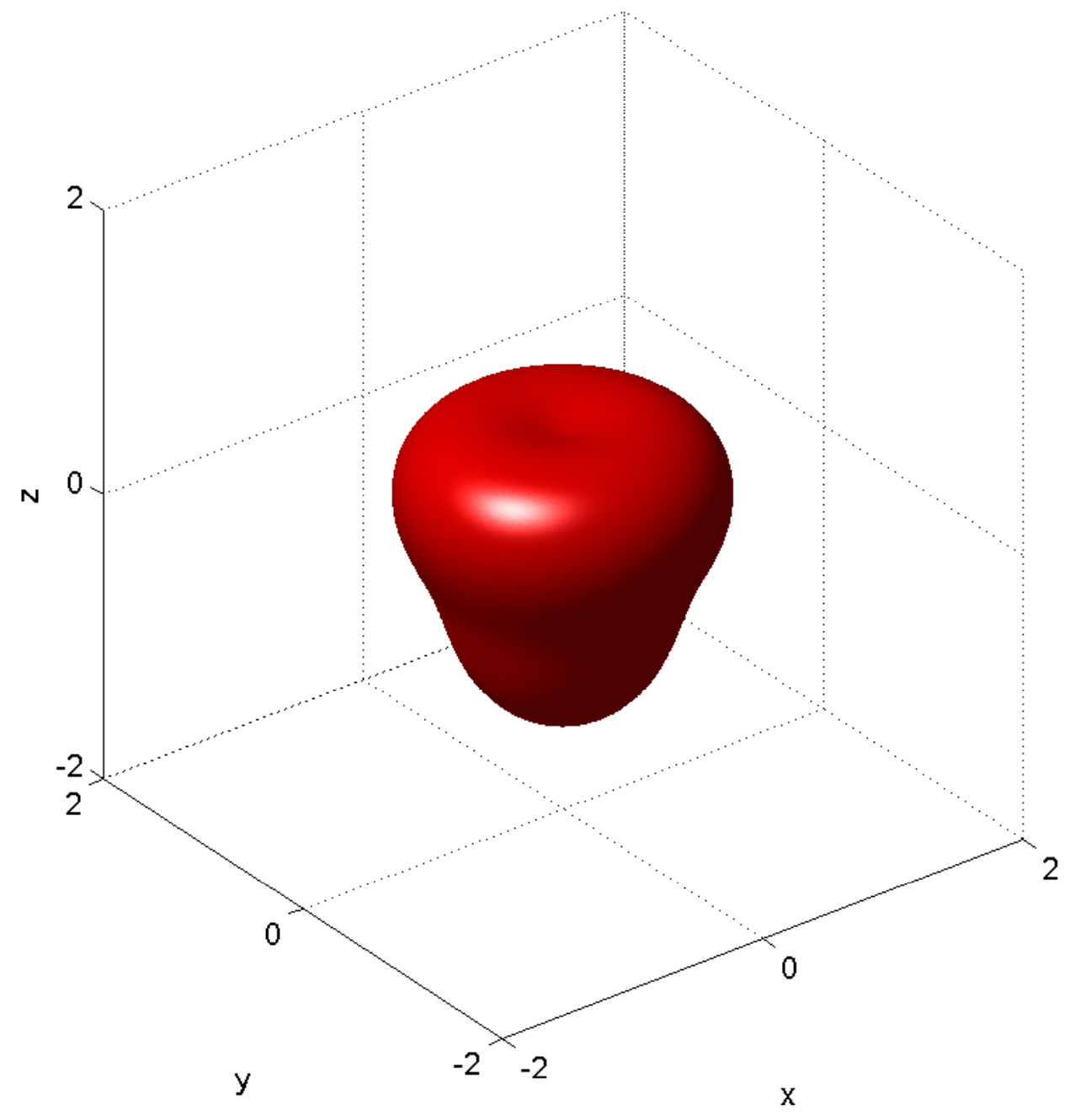}\hfill{}\includegraphics[width=0.24\textwidth]{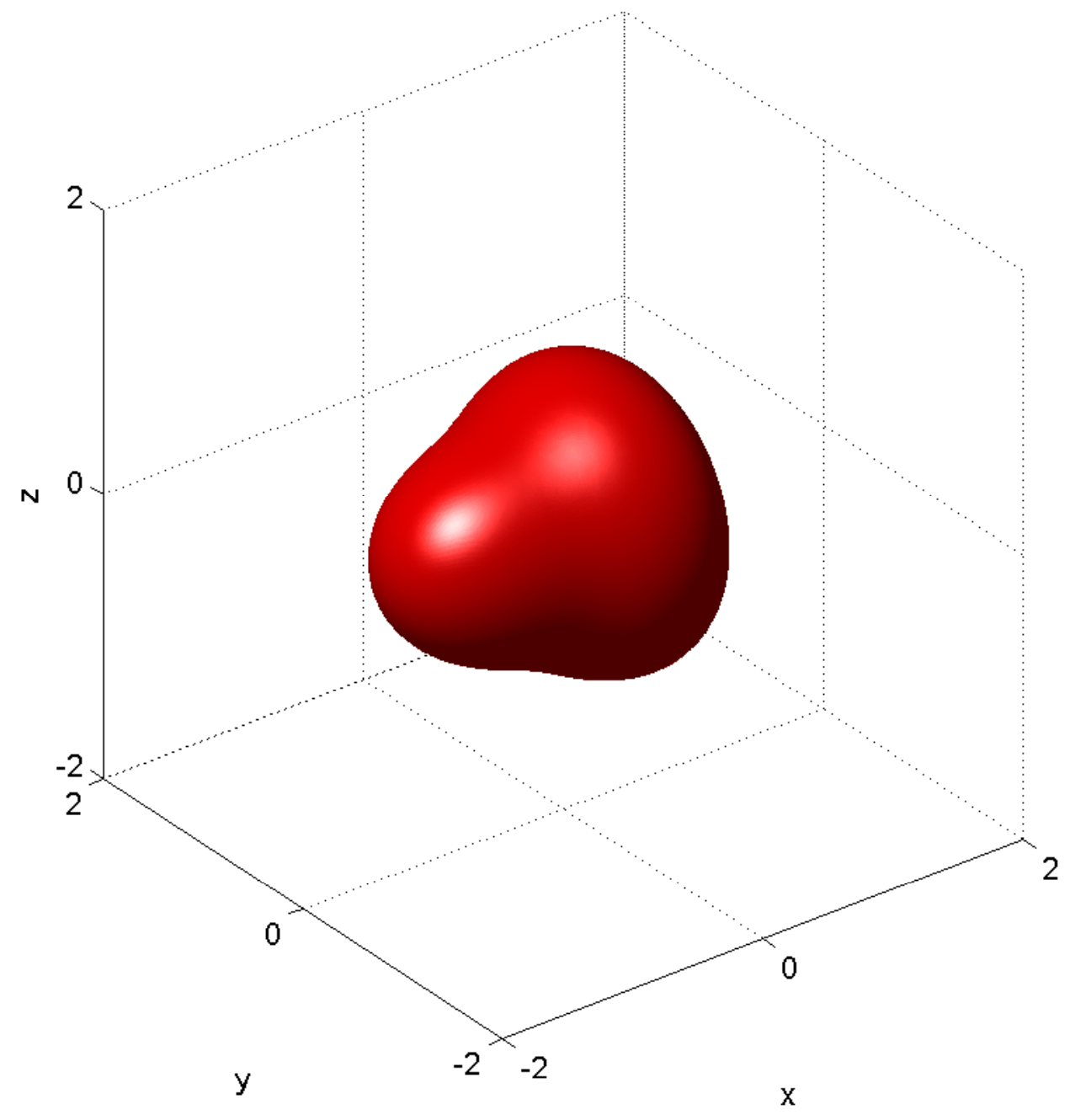}\hfill{}\includegraphics[width=0.24\textwidth]{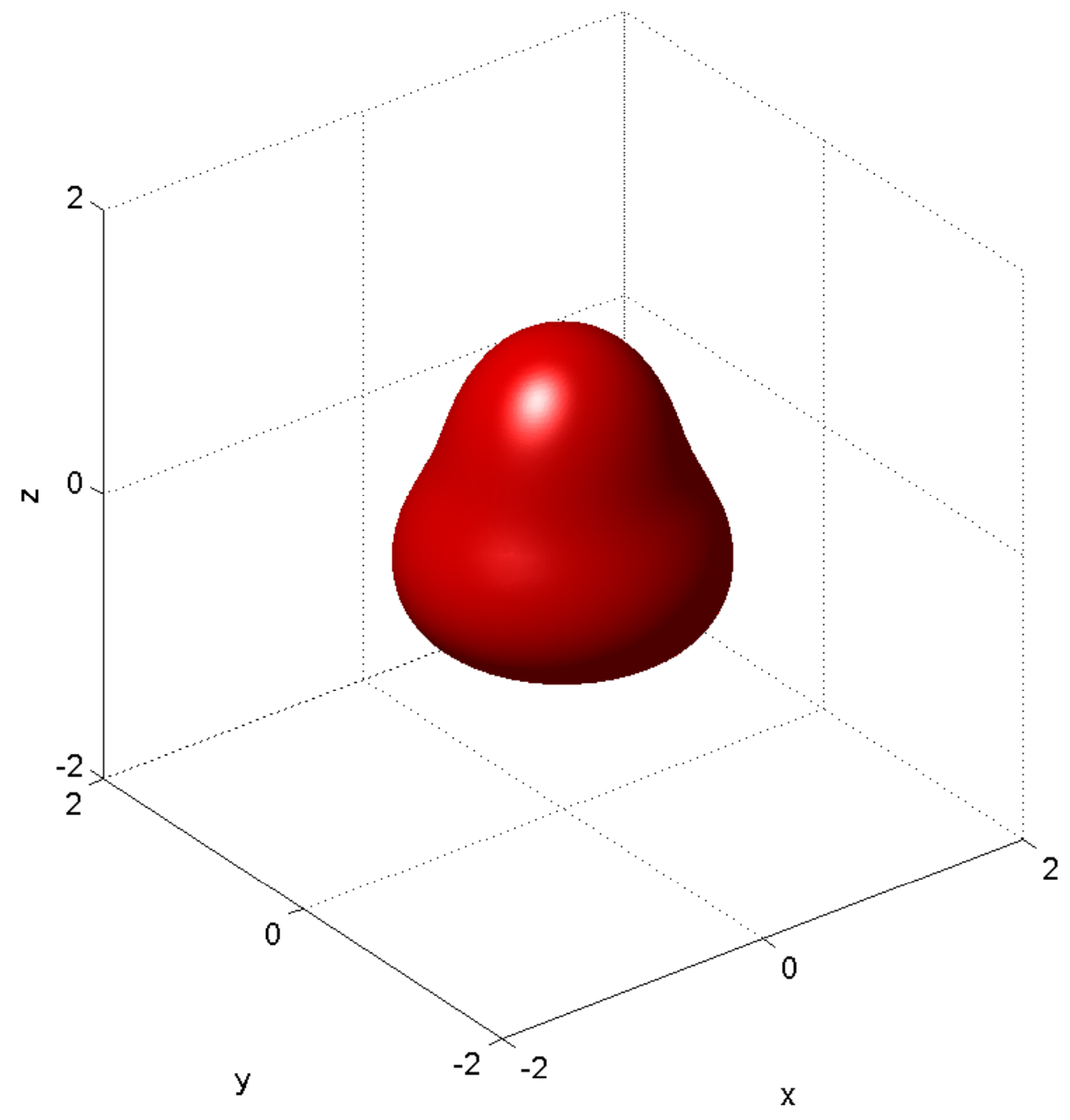}\hfill{}

\hfill{}(a)\hfill{}\hfill{}(b)\hfill{}\hfill{}(c)\hfill{}\hfill{}(d)\hfill{}

\caption{\label{fig:Ex3_basis} Example 2: Basic scatterer components : a reference acorn
with four orientations.}
\end{figure}

The indicator function $W_3(z)$ is adopted to locate regular-size scatterer components.  By the increasing magnitude of the far field patterns, the UFO reference data is first employed for locating purpose.  Fig.~\ref{fig:Ex3_results}(a) tells us that the first unknown component is a UFO and its position is highlighted. What's interesting in Fig.~\ref{fig:Ex3_results}(a) is that it also indicates a ghost highlight which is close to the position of the acorn, which is due to the similarity between the UFO and acorn geometries.  In the next stage, by subtracting the UFO contribution from the total far field data through Step 9 in Scheme R, we try further the far field data associated with the reference acorn geometry and all its possible orientations. For example, associated with the four orientations in  Fig.~\ref{fig:Ex3_basis}, the corresponding reconstruction results are plotted in Figs.~\ref{fig:Ex3_results}(b)-(e). It is found in Fig.~\ref{fig:Ex3_results}(c) the most prominent indicating  behavior which identifies the acorn shape, its location and upside-down configuration of the second unknown component.

\begin{figure}
\hfill{}\includegraphics[width=0.5\textwidth]{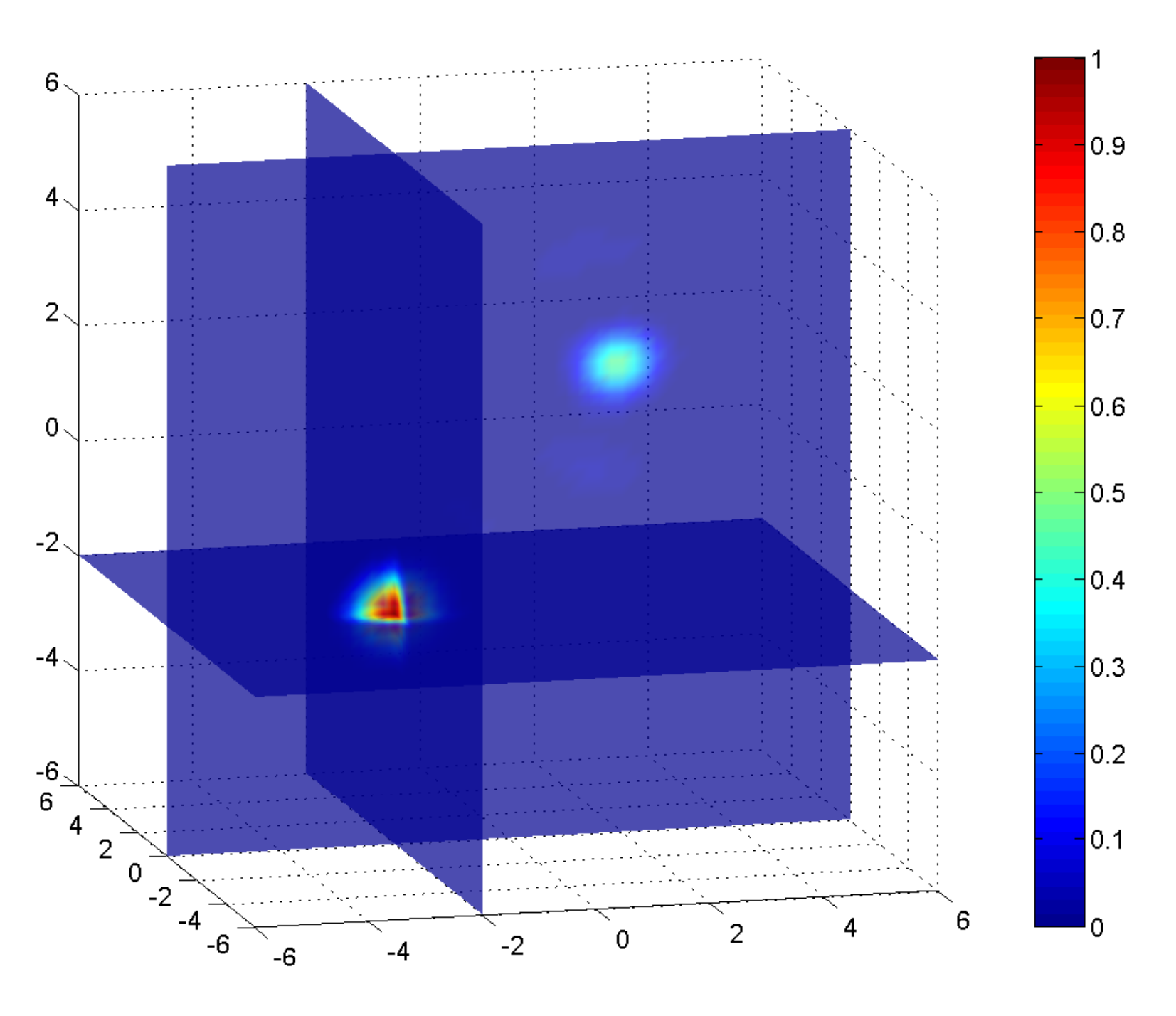}\hfill{}

\hfill{}(a)\hfill{}

\hfill{}\includegraphics[width=0.24\textwidth]{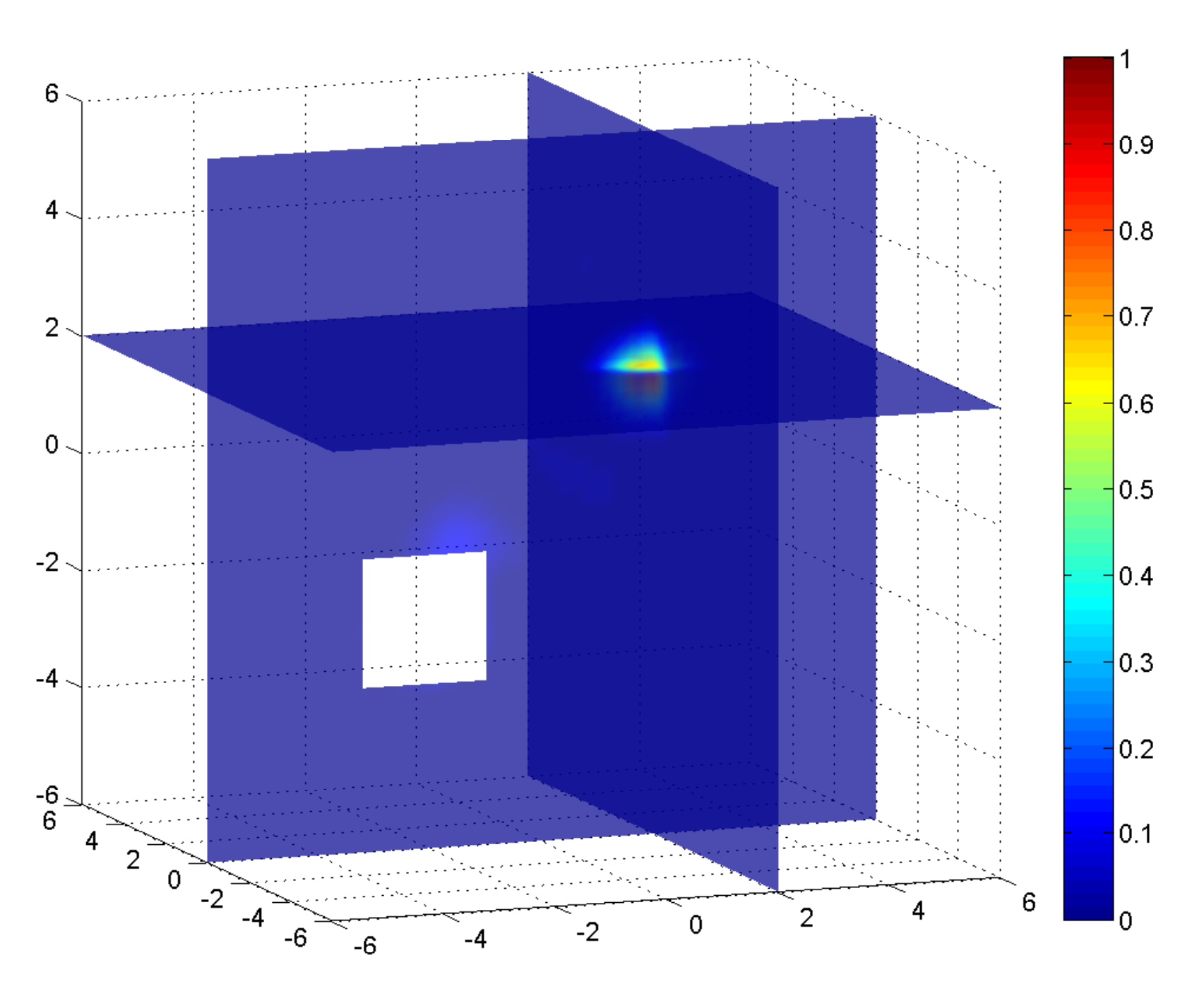}\hfill{}\includegraphics[width=0.24\textwidth]{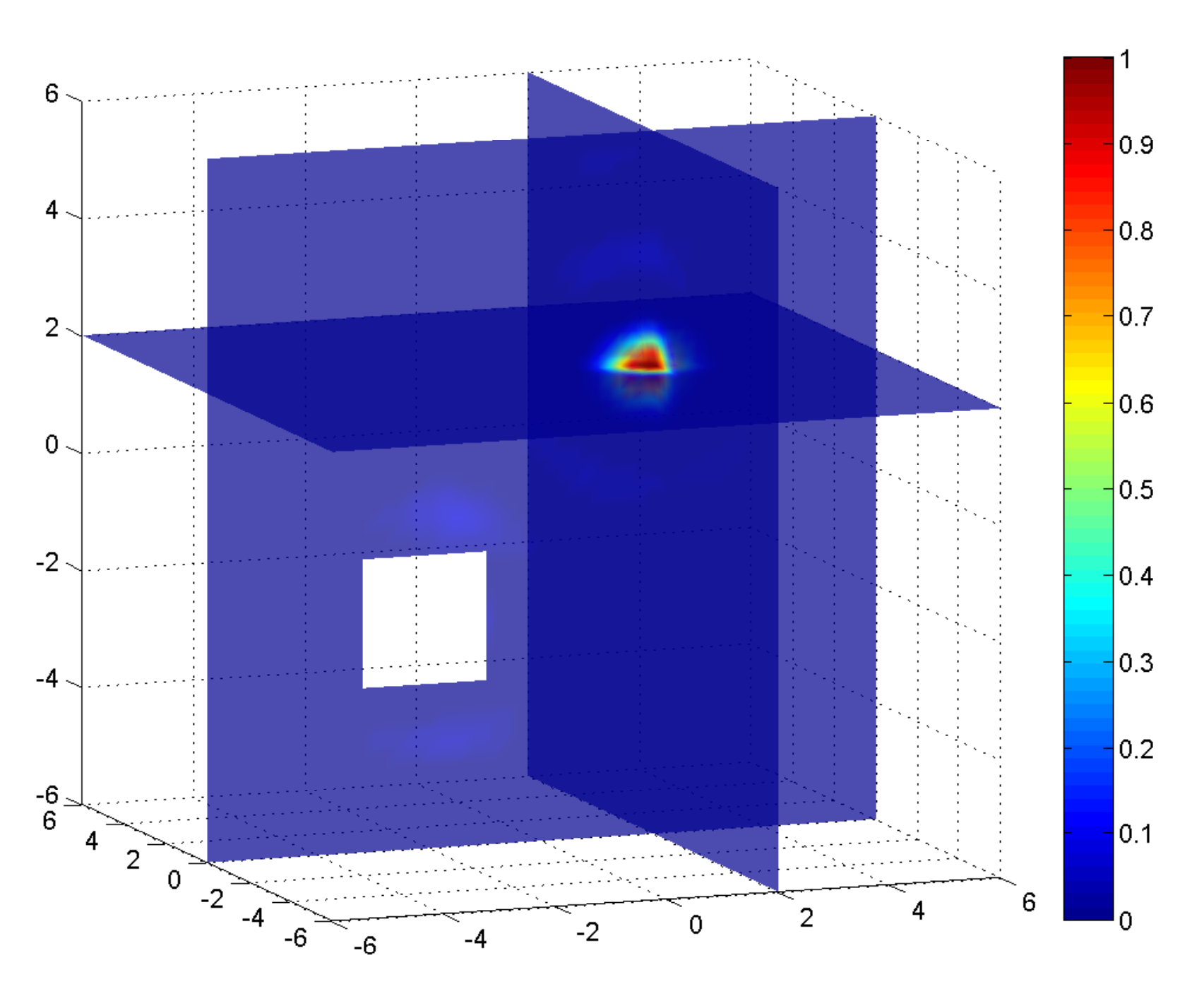}\hfill{}\includegraphics[width=0.24\textwidth]{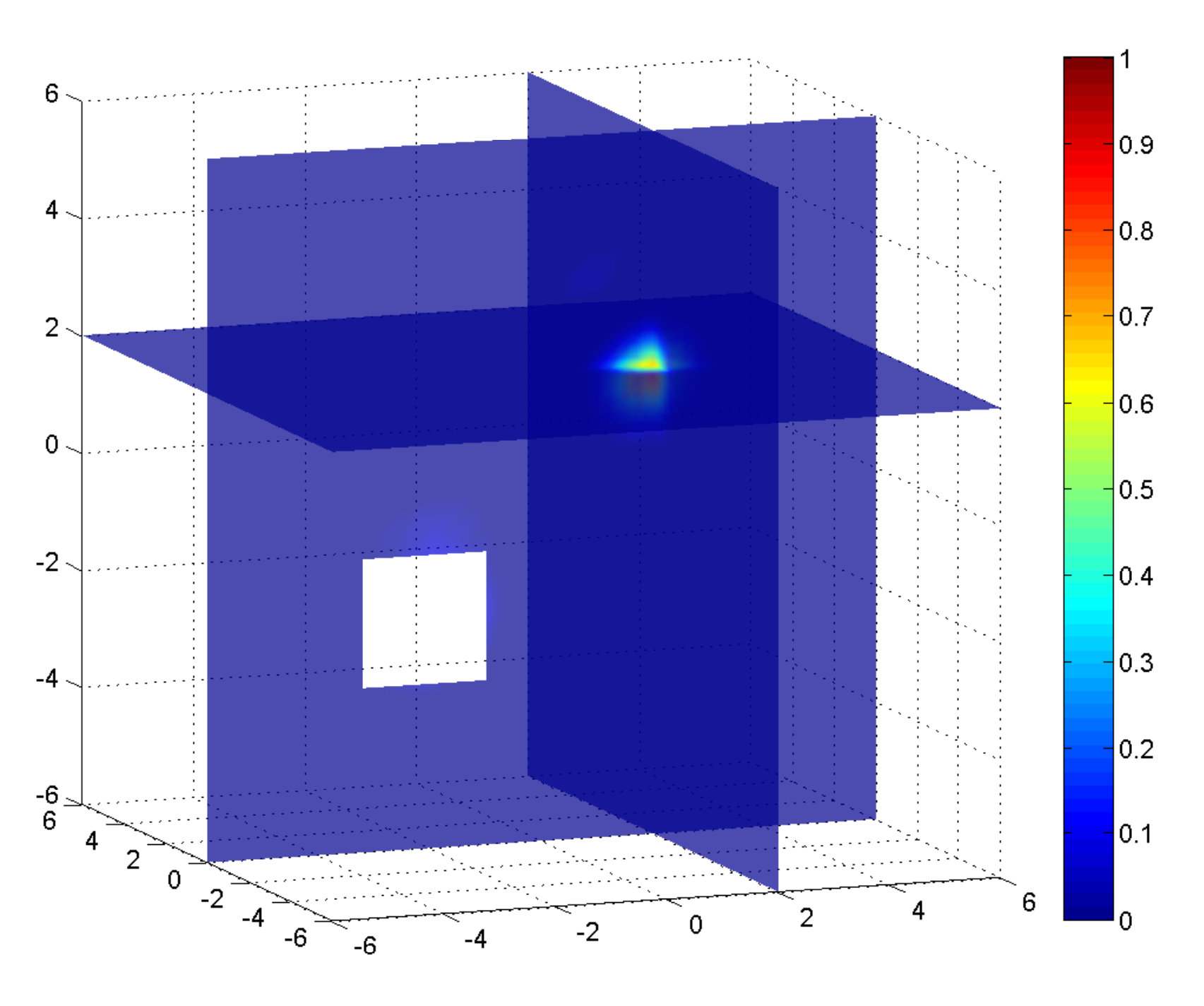}\hfill{}\includegraphics[width=0.24\textwidth]{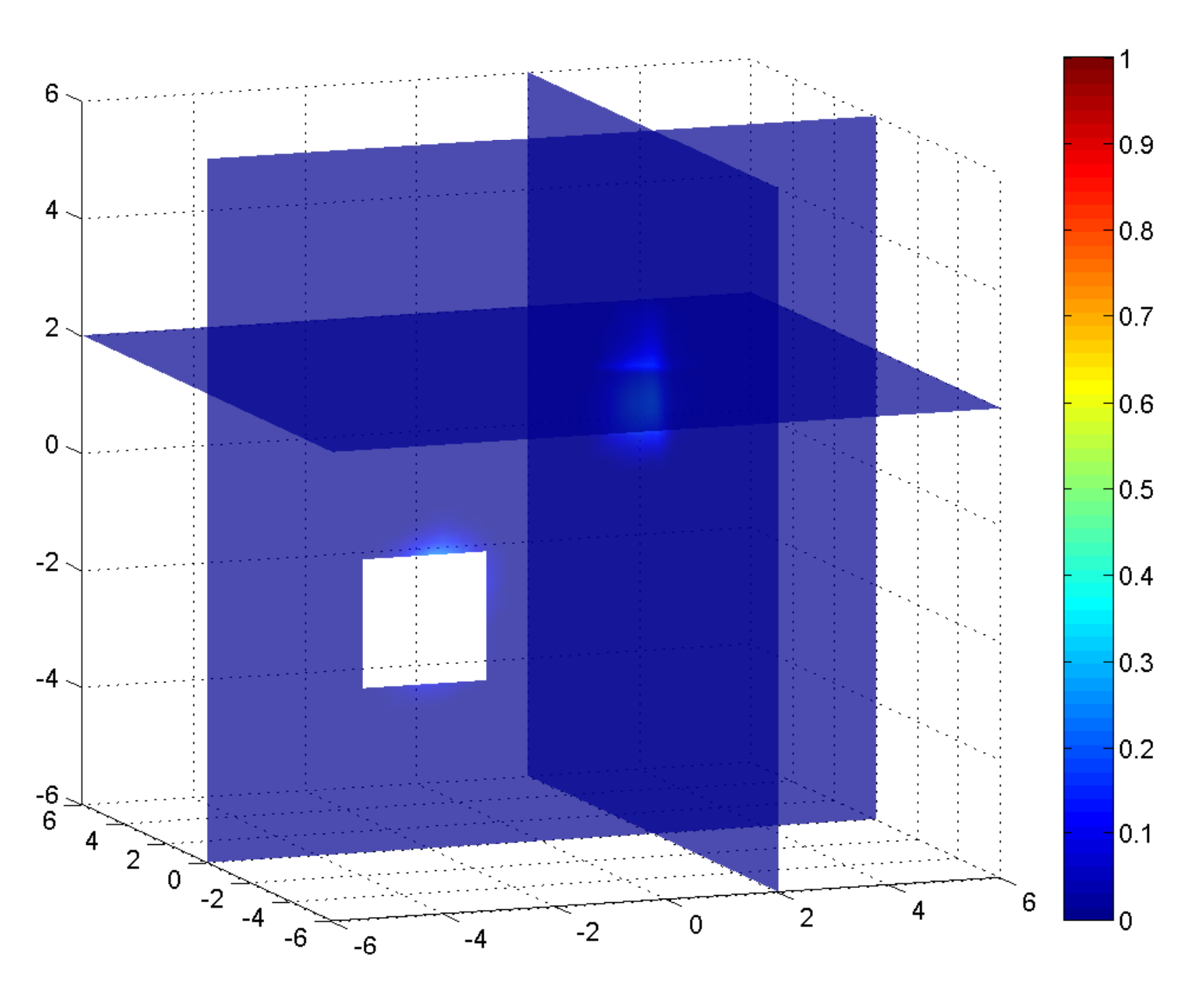}\hfill{}

\hfill{}(b)\hfill{}\hfill{}(c)\hfill{}\hfill{}(d)\hfill{}\hfill{}(e)\hfill{}

\hfill{}basis\_2(pear)\hfill{}\hfill{}\caption{\label{fig:Ex3_results} Example 2. (a) Reconstruction result using the full-wave far field data associated with the reference UFO based on $W_3(z)$; (b)-(e): Reconstruction results using the full-wave far field data associated with the reference acorn and its four orientations based on $W_3(z)$}
\end{figure}

\medskip

\textbf{Example 3} (Multi-scale scatter of multiple components)
In this example, we test further a multi-scale imaging problem using Scheme M.
The true scatterer is composed of a small UFO scaled by one fifth
and an acorn of unitary size. The small UFO is located at $(-2,\,0,\,-2)$, and the
big pear located at $(2,\,0,\,2)$ as shown in Fig. \ref{fig:Ex2_true}.
As for each reference component of \textbf{A} and \textbf{U}, we rotate it every 90 degrees in the $x-y$, $y-z$ and $z-x$ planes.
Three different sizes of the reference components are tested, namely scaled by $0.2$, $1$ and $1.5$.

In the first stage, we extract the information of the regular-size component using the indicator function $W_3(z)$ of Scheme R by computing the inner product with a priori known far-field data associated with those reference scatterer component with different orientations and sizes.  We plot in Fig.~\ref{Ex2_results} the indicator function values of $W_3(z)$ in one-to-one correspondence with the four orientations
of the reference acorn as shown in Fig.~\ref{fig:Ex3_basis}. It can be observed in Fig.~\ref{Ex2_results}(b) that  the highlighted part tells us that the first  regular-size unknown  component is  the approximate location of an  acorn with no scaling and upside-down configuration.  By testing other regular-size components, no significant maxima are found and it is now safe to undergo the second stage for detecting the possible remaining small-size components.

In the next stage,  we adopt the local tuning technique pby performing a local search over a small cubic mesh  around the rough position of the acorn  
determined by the highlighted local maximum in Fig.~\ref{Ex2_results}(b).   In Fig.~\ref{fig:Ex2_resampling}, as the search grid points approach gradually  from $(1.6\,, 0\,,2)$ to $(2\,, 0\,,2)$ (from left to right), the  value distribution of the indicator function in Scheme S displays an gradual change of  the highlighted position. In Fig.~\ref{fig:Ex2_resampling}(c), the red dot  indicates the approximate position of the smaller UFO component, which agrees with the exact one very well. In such a way, the  small UFO component could be correctly identified and positioned, and it helps us fine tune the position of the acorn and update it  to be  $(2\,, 0\,,2)$.

\begin{figure}
\hfill{}\includegraphics[width=0.6\textwidth]{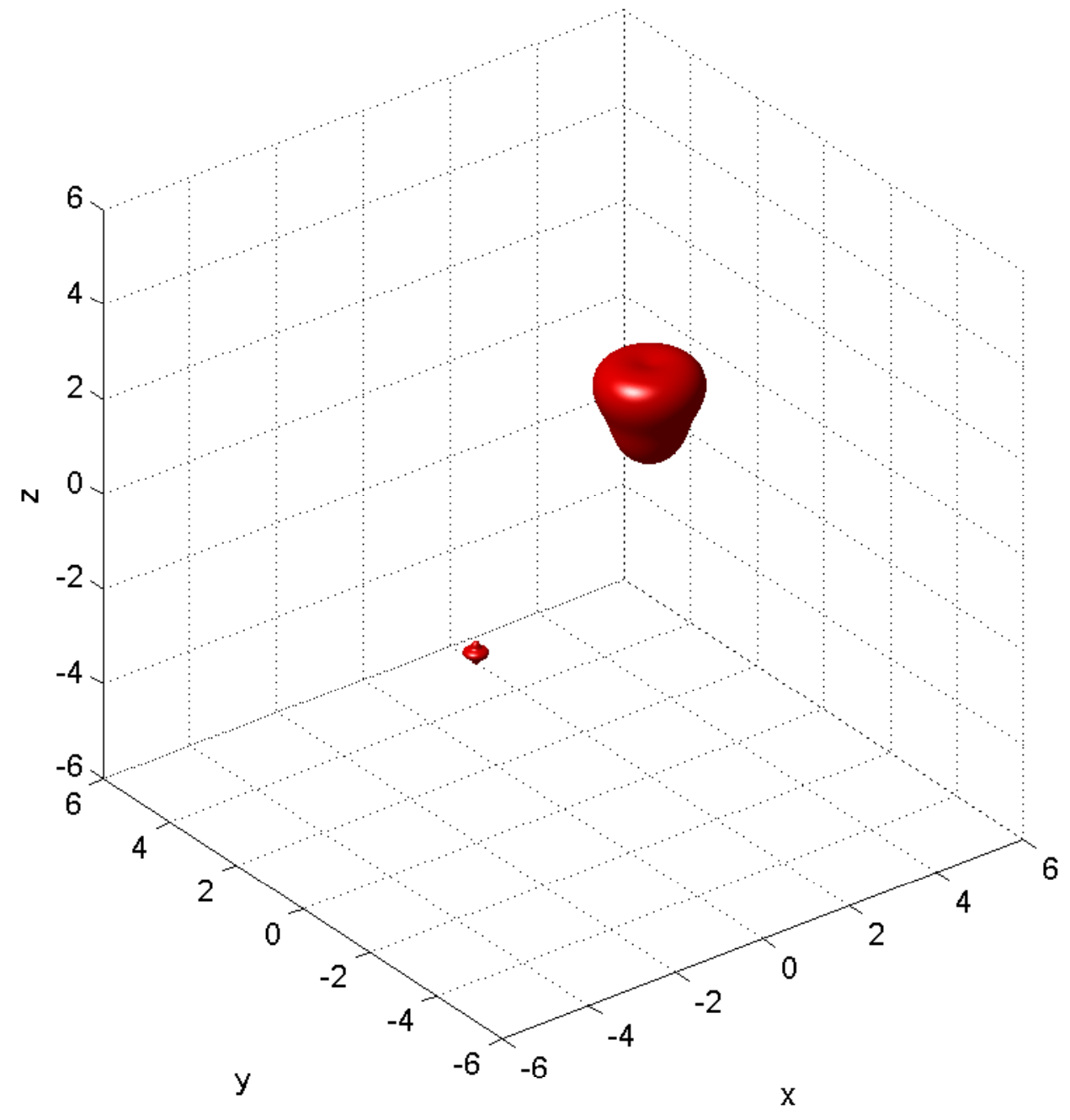}\hfill{}

\caption{\label{fig:Ex2_true}True scatterer in Example 3.}
\end{figure}

\begin{figure}
\hfill{}\includegraphics[width=0.24\textwidth]{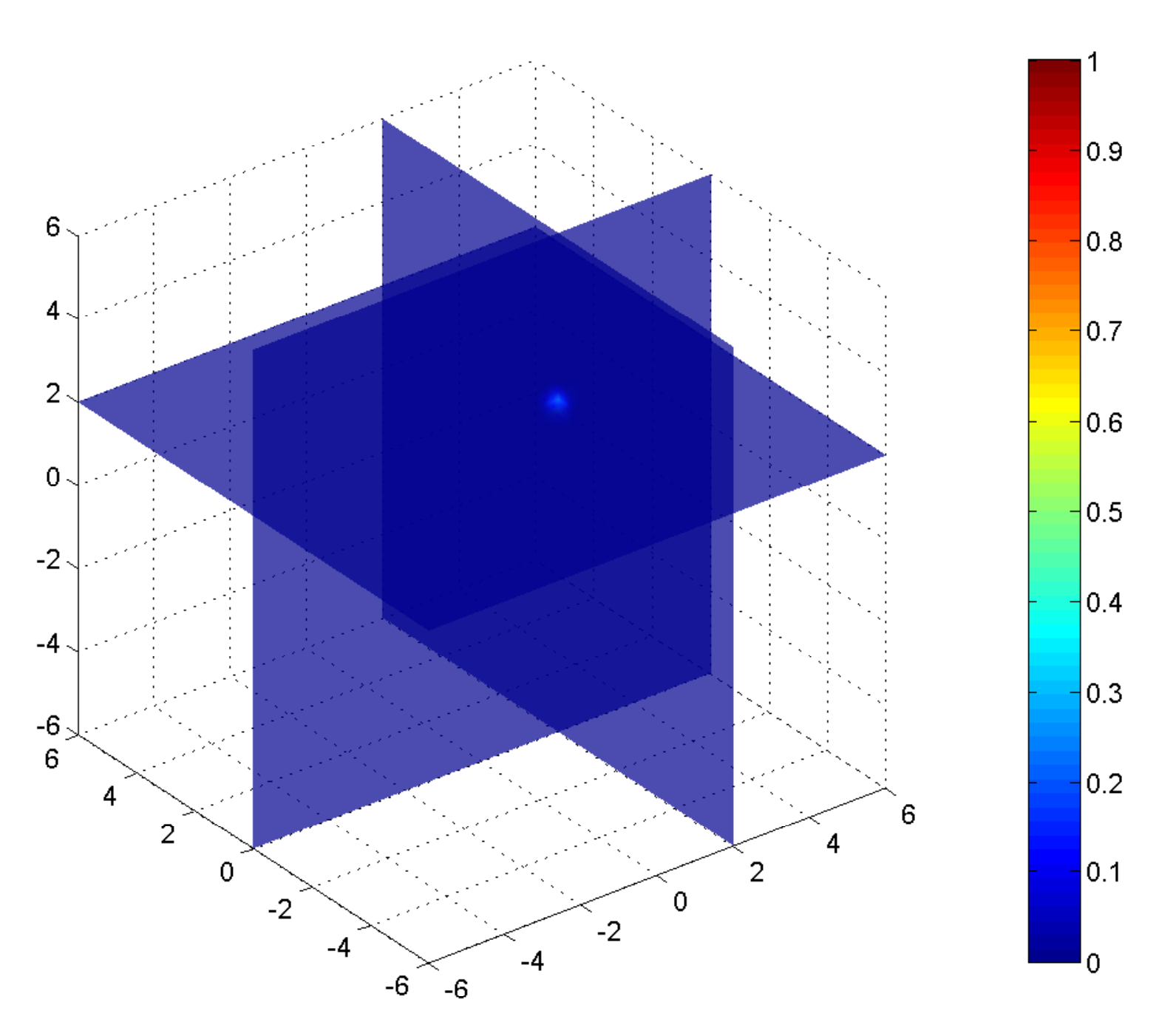}\hfill{}\includegraphics[width=0.24\textwidth]{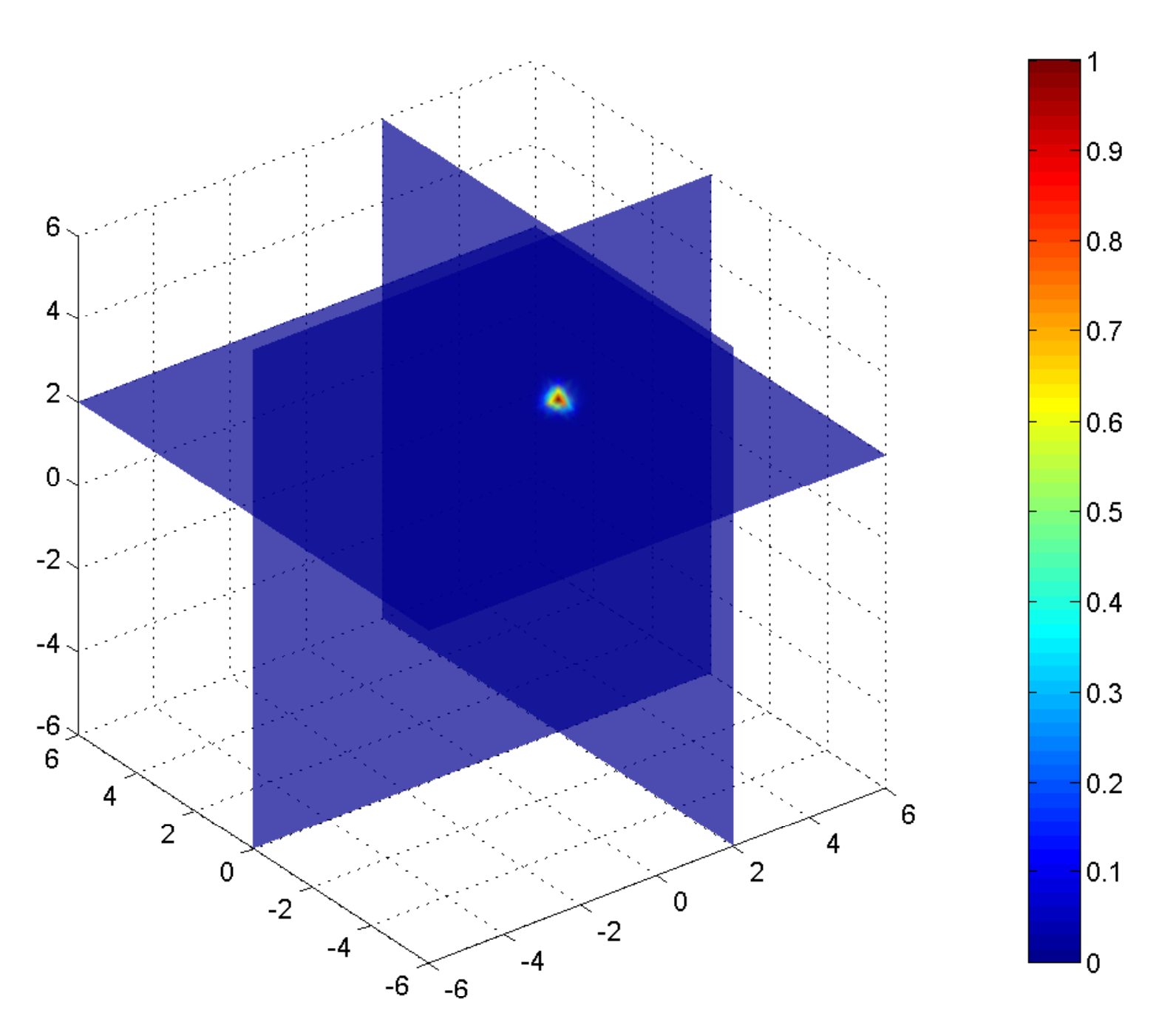}\hfill{}\includegraphics[width=0.24\textwidth]{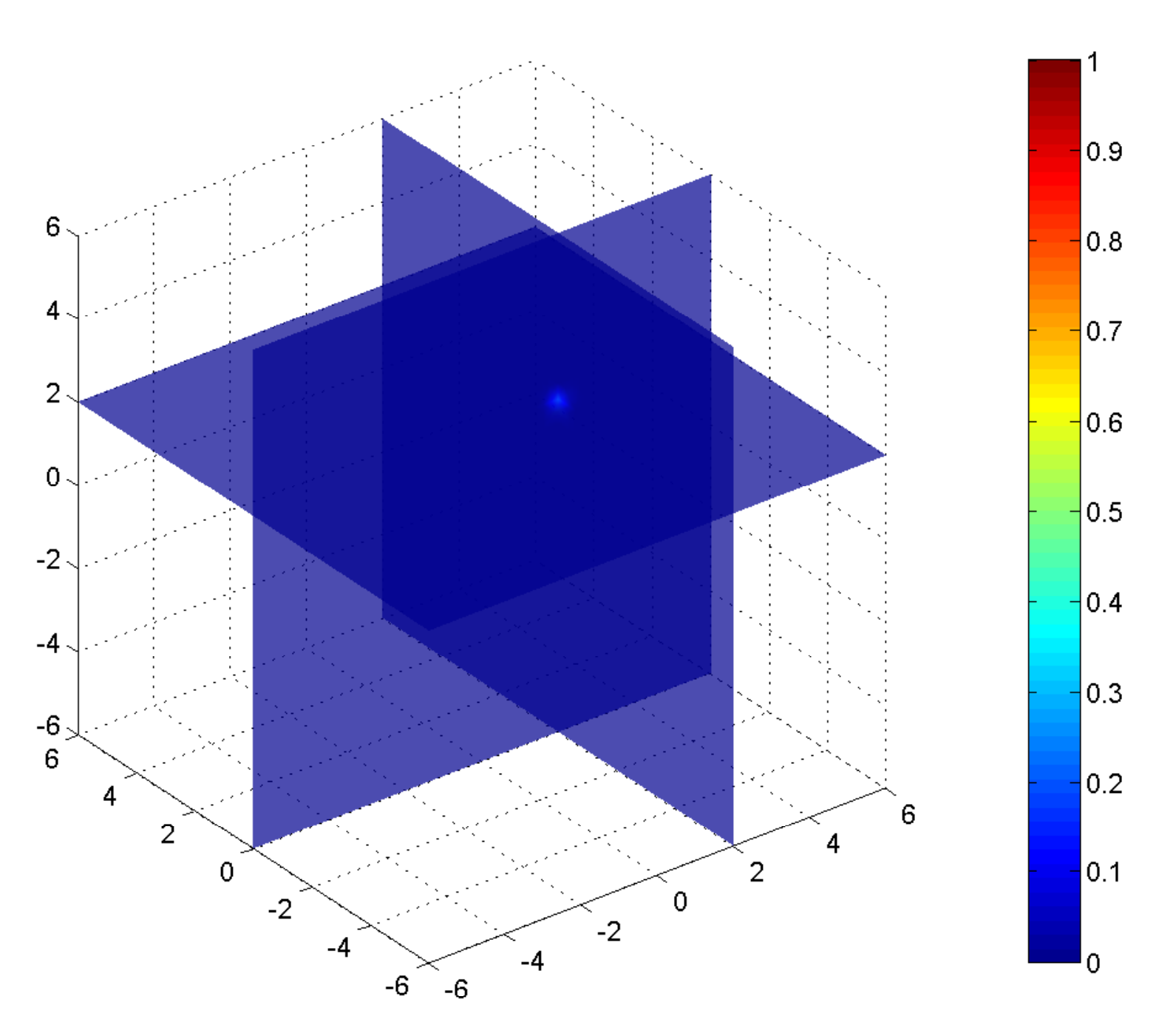}\hfill{}\includegraphics[width=0.24\textwidth]{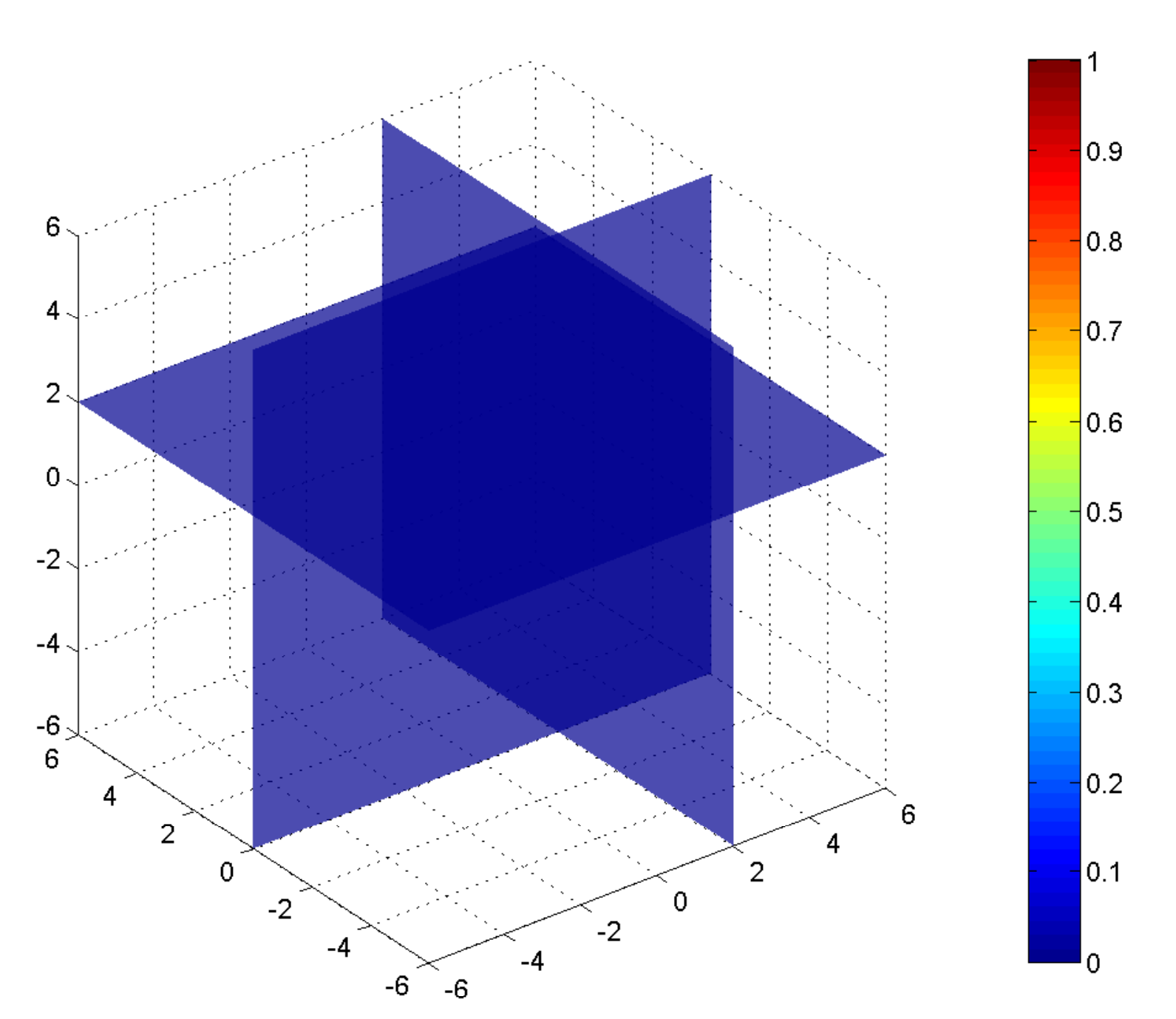}\hfill{}

\hfill{}(a)\hfill{}\hfill{}(b)\hfill{}\hfill{}(c)\hfill{}\hfill{}(d)\hfill{}

\caption{\label{Ex2_results} Reconstruction results in the first stage of Scheme M in Example 3. From left to right: Iindicating plots of $W_3(z)$ by testing with far field data associated with the four orientations in Fig.~\ref{fig:Ex3_basis}. }

\end{figure}

\begin{figure}
\hfill{}\includegraphics[width=0.32\textwidth]{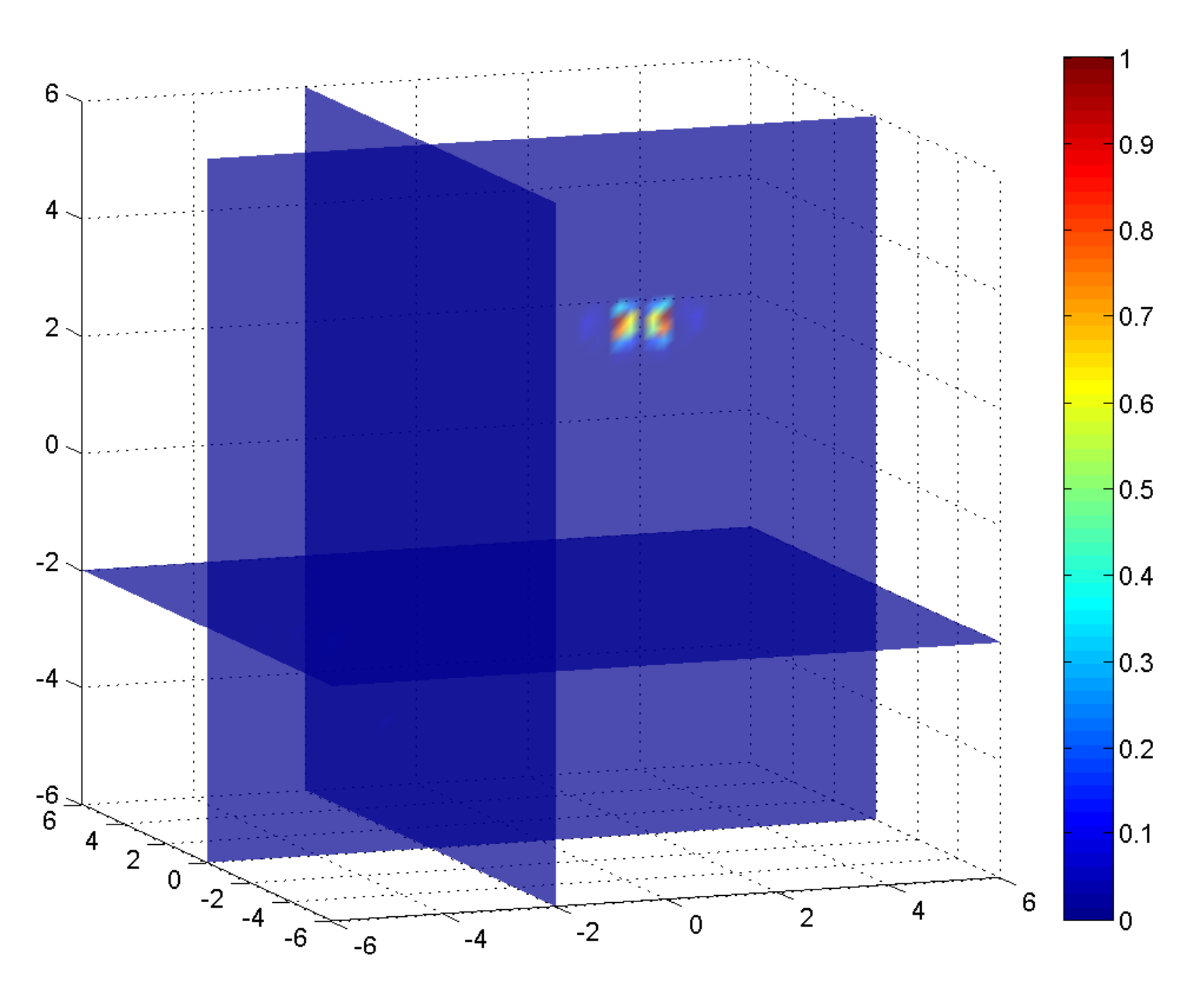}\hfill{}\includegraphics[width=0.32\textwidth]{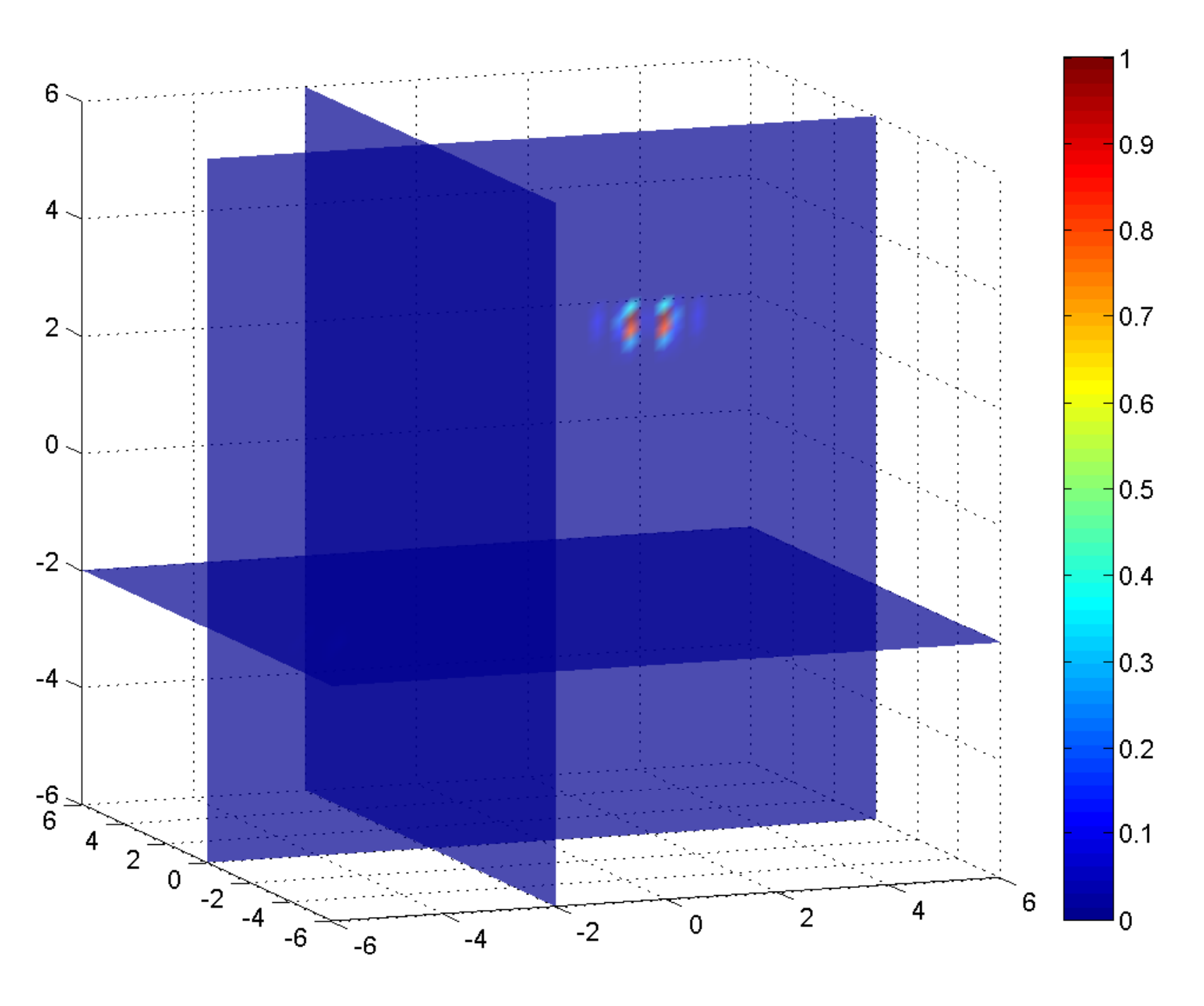}\hfill{}\includegraphics[width=0.32\textwidth]{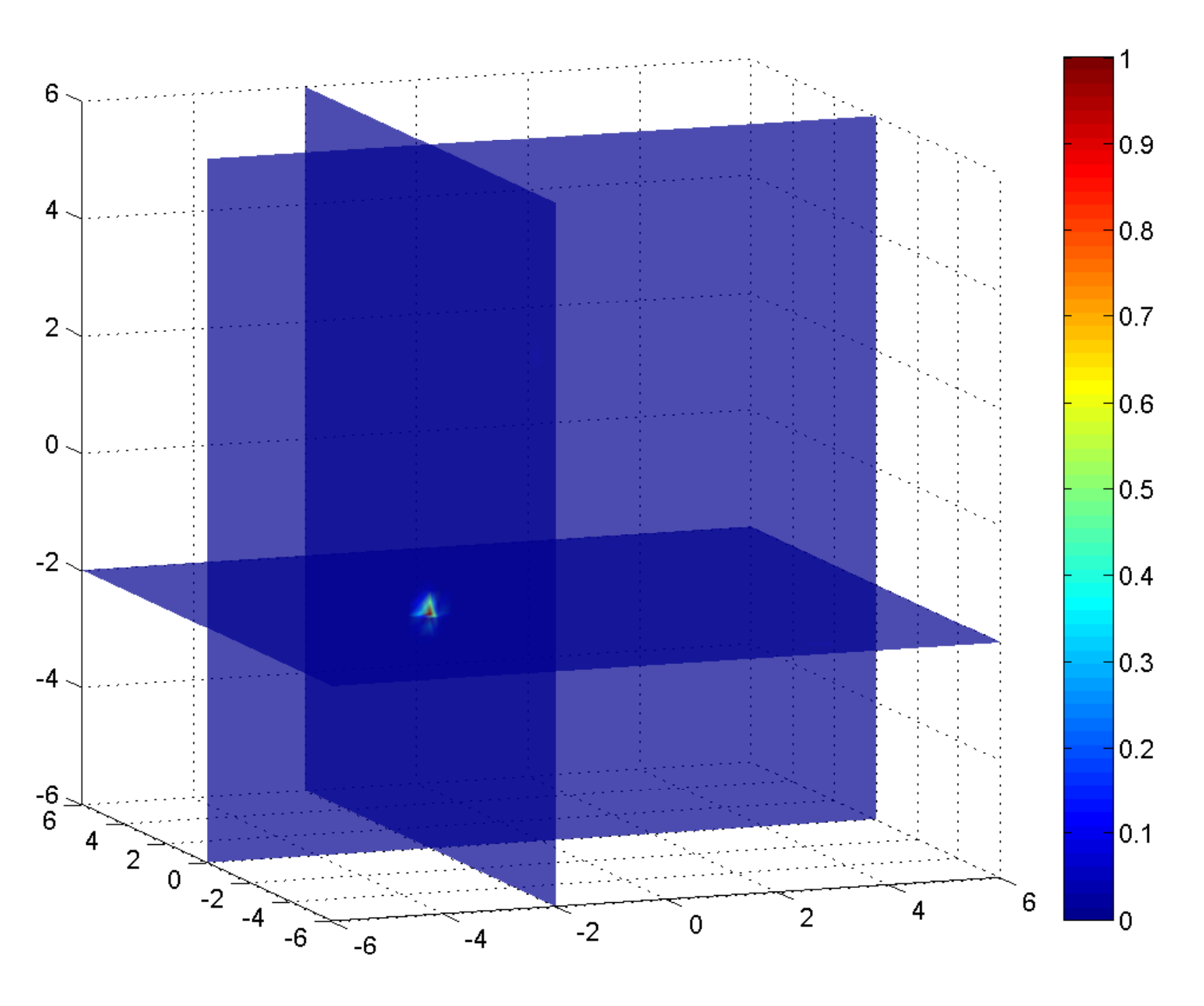}\hfill{}

\hfill{}(1.6,0,2)\hfill{}\hfill{}(1.8,0,2)\hfill{}\hfill{}(2,0,2)\hfill{}

\caption{\label{fig:Ex2_resampling} Reconstruction results by locally tuning the rough location on some typical  local grid points in Example 3. }
\end{figure}

\section{Concluding remarks}
In this work, three imaging schemes S, R and M are developed to identify respectively, multiple small, extended and multiscale rigid elastic scatterers from the far-field pattern corresponding to a single incident plane wave with fixed incident direction and frequency. The incident elastic wave is allowed to be a plane pressure wave, a plane shear wave or a general linear combination of $P$- and $S$- waves taking the form (\ref{plane}).
Relying on the availability of the far-field data, we have developed three indicating functions in each scheme by using the P-part, S-part or the full far-field pattern. Our locating schemes are based on the local \textit{maximum} behaviors of the indicating functions. Rigorous mathematical justifications are provided and
several benchmark examples are presented to illustrate the efficiency of the schemes.

We remark that in Scheme R, if certain a priori information is available
about the possible range of the orientations and sizes of the scatterer components, it is sufficient
for the augmented reference space $\mathscr{A}_h$ in \eqref{eq:a3}  to cover that range only.
In Lemma \ref{Lem:uniqueness}, we have shown uniqueness in locating the position of a translated
elastic body from either the P-part or S-part of the far-field pattern corresponding to a single plane pressure or shear wave. However, we do not know if analogous uniqueness results hold for rotated and scaled elastic bodies; that is, wether or not a single far-field pattern can uniquely determine a rotating or scaling operator acting on the scatterer.

Although only the rigid scatterers are considered in the current study,
the proposed schemes can be generalized to locating multiple multiscale cavities modeled by the traction-free boundary condition on the surface, the Robin-type impenetrable elastic scatterers as well as inhomogeneous penetrable elastic bodies with variable densities and Lam\'{e} coefficients inside. 
To achieve this, one only needs to investigate the analogous asymptotic expansions of the far-field pattern to Lemma \ref{lem2}, which will be used to design the locating functionals for small scatterers. The results in Section \ref{Proof-Th2} remain valid for extended elastic scatterers of different physical natures. Hence, the schemes of locating extended scatterers can be straightforwardly extended to the cases mentioned above.
Our approach can be also extended to the case where only limited-view measurement data are available. Further, the extension to the use of time dependent
measurement data would be nontrivial and poses interesting challenges for
further investigation.

\section*{Acknowledgement}
This work was initiated when G. Hu visited the South University of Science and Technology of China (SUSTC) in June 2013. He gratefully acknowledges SUSTC's hospitality and the financial support from German Research Foundation (DFG) under Grant No. HU 2111/1-1.  The work was supported by the NSF of China (No.\,11371115, 11201453, 91130022) and by the NSF grant, DMS 1207784.

\end{document}